%%%%%%%%%%%%%%%%%%%%%%%%%%%%%%%%%%%%%%%%%%
%%% This is written in latex
\documentclass{amsart}
\usepackage{latexsym,amsxtra,amscd,ifthen}
\usepackage{amsfonts}
\usepackage{verbatim}
\usepackage{amsmath}
\usepackage{amsthm}
\usepackage{amssymb}
\usepackage[arrow,matrix,cmtip]{xy}
\usepackage{tikz}
%\usepackage{showkeys}
%%%zhang\usetikzlibrary{arrows,fit,shapes.geometric,decorations.pathmorphing}
\usepackage{color}   %For red ``to-do'' notes

\numberwithin{equation}{section}

\theoremstyle{plain}
\newtheorem{theorem}{Theorem}[section]
\newtheorem{lemma}[theorem]{Lemma}
\newtheorem{proposition}[theorem]{Proposition}
\newtheorem{corollary}[theorem]{Corollary}
\newtheorem{conjecture}[theorem]{Conjecture}

\theoremstyle{definition}
\newtheorem{definition}[theorem]{Definition}
\newtheorem{example}[theorem]{Example}

\newtheorem{remark}[theorem]{Remark}
\newtheorem{question}[theorem]{Question}

\makeatletter              % This sequence of commands will
\let\c@equation\c@theorem  % incorporate equation numbering
                           % into theorem numbering scheme
\makeatother

\DeclareMathOperator{\hdet}{hdet}

\DeclareMathOperator{\GL}{GL}

\DeclareMathOperator{\Ext}{Ext}

\DeclareMathOperator{\Mod}{Mod}

\DeclareMathOperator{\injdim}{injdim}

\DeclareMathOperator{\bfl}{\mathfrak l}

\DeclareMathOperator{\Aut}{Aut}
\newcommand{\Autz}{\Aut_{\mathbb Z}}
\newcommand{\Autw}{\Aut_{{\mathbb Z}^w}}

\DeclareMathOperator{\Hom}{Hom}
\DeclareMathOperator{\RHom}{RHom}
\DeclareMathOperator{\GrMod}{GrMod}

\newcommand{\fm}{\mathfrak{m}}
\newcommand{\fe}{\mathfrak{e}}
\newcommand{\fu}{\mathfrak{u}}

\newcommand{\mf}{\mathfrak}

\newcommand{\mb}{\mathbb}

\newcommand{\ca}{\circledast}
\newcommand{\cc}{\circledcirc}

\begin{document}

\title{Skew Calabi-Yau algebras and homological identities}

\author{Manuel Reyes}
\address{(Reyes)
Bowdoin College, Department of Mathematics, 8600 College Station,
Brunswick, ME 04011-8486}
\email{reyes@bowdoin.edu}

\author{Daniel Rogalski}
\address{(Rogalski)
UCSD, Department of Mathematics, 9500 Gilman Dr. \#0112, La Jolla,
CA 92093-0112, USA. }
\email{drogalsk@math.ucsd.edu}

\author{James J. Zhang}
\address{(Zhang)
University of Washington, Department of Mathematics, Box 354350,
Seattle, WA 98195-4350, USA. }
\email{zhang@math.washington.edu}

%\thanks{}

\begin{abstract}
A skew Calabi-Yau algebra is a generalization of a Calabi-Yau
algebra which allows for a non-trivial Nakayama automorphism. We prove
three homological identities about the Nakayama automorphism and give several applications.   The identities 
we prove show (i) how the Nakayama automorphism of a smash product algebra $A \# H$ is related to the 
Nakayama automorphisms of a graded skew Calabi-Yau algebra $A$ and a finite-dimensional Hopf algebra 
$H$ that acts on it; (ii) how the Nakayama automorphism of a graded twist of $A$ is related to the Nakayama automorphism of $A$; 
and (iii) that the Nakayama automorphism of a skew Calabi-Yau algebra $A$ has trivial homological
determinant  in case $A$ is noetherian, connected graded, and Koszul.  
\end{abstract}

\subjclass[2000]{Primary 16E65, 18G20, Secondary 16E40, 16W50, 16S35, 16S38.}

%16E65  (2000-now) Homological conditions on rings
%18G20  (1973-now) Homological dimension
%16E40  (1991-now) (Co)homology of rings and algebras
%(e.g. Hochschild, cyclic, dihedral,
% 16W50 (1991-now) Graded rings and modules
%16S35  (1991-now) Twisted and skew group rings, crossed products
%16S38  (2000-now) Rings arising from non-commutative algebraic geometry

\keywords{Calabi-Yau algebra, Skew Calabi-Yau algebra,
Artin-Schelter regular, Artin-Schelter Gorenstein, Nakayama automorphism,
homological determinant, homological identity}

\date{February 15, 2013}

\maketitle

%\tableofcontents

\setcounter{section}{-1}
\section{Introduction}
\label{xxsec0}

While the Calabi-Yau property originated in geometry, it now has incarnations
in the realm of algebra that seem to be of growing importance.
Calabi-Yau triangulated categories were introduced by Kontsevich
\cite{Ko} in 1998.  See \cite{Ke} for an introductory survey about Calabi-Yau triangulated categories.   
Calabi-Yau algebras were introduced by Ginzburg \cite{Gi} in 2006  as a
noncommutative version of coordinate rings of Calabi-Yau varieties.
Since the late 1990s, the study of Calabi-Yau categories and
algebras has been related to a large number of other topics such as
quivers with superpotentials, DG algebras, cluster algebras and
categories, string theory and conformal field theory, noncommutative
crepant resolutions, and Donaldson-Thomas invariants.   Some fundamental
questions in the area were answered by Van den Bergh recently in \cite{VdB3}.

One known method for constructing a noncommutative Calabi-Yau
algebra is to form the smash product $A \# H$ of a Calabi-Yau
algebra $A$ with a Calabi-Yau Hopf algebra $H$ that acts nicely on
$A$. This phenomenon has been studied quite broadly; for instance,
see \cite{BSW}, \cite[Section 3]{Fa}, \cite[Section 3]{IR},
\cite{LM}, \cite{LiWZ}, \cite{WZhu}, and
\cite{YuZ1,YuZ2}. One of the most general results in this
direction~\cite{LiWZ} states that under technical hypotheses on
Calabi-Yau algebras $A$ and $H$, $A \# H$ is Calabi-Yau if and only
if the homological determinant of the $H$-action on $A$ is trivial.
However, the smash product $A \# H$ may be Calabi-Yau even when $A$
is not Calabi-Yau; see, for instance, Example \ref{xxex1.7} below.  In order to explain
this phenomenon, it is natural to look into a broader class of
interesting algebras, called {\it skew Calabi-Yau algebras} in this paper, which
are a generalization of Ginzburg's Calabi-Yau algebras. 
While Ginzburg originally defined the Calabi-Yau property for DG-algebras
\cite[Definition 3.2.3]{Gi}, we only consider the non-DG case in this paper.

We will employ the following notation.
Let $A$ be an algebra over a fixed commutative base field $k$.  The unmarked tensor
$\otimes$ always means $\otimes_k$.  Let $M$ be an
$A$-bimodule, and let $\mu,\nu$ be algebra automorphisms of $A$. Then
${^\mu M^\nu}$ denotes the induced $A$-bimodule such that ${^\mu M^\nu}=M$ as a $k$-space, 
and where  $$a \ca m\ca b=\mu(a)m\nu(b)$$ for all $a,b\in A$ and all $m \in  {^\mu M^\nu}(=M)$. Let $A^e$ denote
the enveloping algebra $A\otimes A^{op}$, where $A^{op}$ is the opposite
ring of $A$.  An $A$-bimodule can be identified with a left $A^e$-module naturally,
or with a right $A^e$-module since $A^e$ is isomorphic to $(A^e)^{op}$.   
We usually work with left modules, unless otherwise stated.  For example, $\Hom_A(-,-)$ and
$\Ext^d_A(-,-)$ are defined for left $A$-modules. A right $A$-module
is identified with a left $A^{op}$-module.  Let $B$-$\Mod$ be the category of left $B$-modules for a ring $B$.   

\begin{definition}
\label{xxdef0.1}
Let $A$ be an algebra over $k$.
\begin{enumerate}
\item
$A$ is called {\it skew Calabi-Yau} (or {\it skew CY}, for short) if
\begin{enumerate}
\item[(i)]
$A$ is homologically smooth, that is, $A$ has a projective resolution
in the category $A^e$-$\Mod$ that has finite length and such that each term in
the projective resolution is finitely generated, and
\item[(ii)]
there is an integer $d$ and an algebra automorphism $\mu$ of $A$ such
that
\begin{equation}
\label{E0.1.1}\tag{E0.1.1}
\Ext^i_{A^e}(A, A^e)\cong \begin{cases} 0 & i\neq d\\
{^1A^\mu} & i=d, \end{cases}
\end{equation}
as $A$-bimodules, where $1$ denotes the identity map of $A$.
\end{enumerate}
\item
If \eqref{E0.1.1} holds for some algebra automorphism $\mu$ of $A$,
then $\mu$ is called the {\it Nakayama automorphism} of $A$, and
is usually denoted by $\mu_A$.  It is not hard to see that $\mu_A$ (if it exists) is
unique up to inner automorphisms of $A$ (see Lemma~\ref{xxlem1.10} below).
\item
\cite[Definition 3.2.3]{Gi}
We call $A$ {\it Calabi-Yau} (or CY, for short) if $A$ is skew
Calabi-Yau and $\mu_A$ is inner (or equivalently, $\mu_A$ can be
chosen to be the identity map after changing the generator of the bimodule
${^1A^\mu}$).
\end{enumerate}
\end{definition}
There is some variation in the literature concerning the exact definition of (skew) CY algebras.  
Ginzburg included in his definition of a CY algebra \cite[Definition 3.2.3]{Gi} the condition that 
the $A^e$-projective resolution of $A$ is self-dual, but Van den Bergh 
has shown that this is satisfied automatically \cite[Appendix C]{VdB3}.   We are also not the 
first to study skew CY algebras; the notion has been called twisted CY in several recent papers (see the beginning 
of Section 1 for more discussion).
%A lot of research has been done in the CY case and this paper
%will focus on the skew Calabi-Yau case. The word ``skew'' basically
%means that the Nakayama automorphism is not the identity, that
%indicates additional noncommutativity.

We are primarily interested here in the special case of graded algebras $A$.   If $A$ is ${\mathbb Z}^w$-graded, 
Definition \ref{xxdef0.1} should be made in the category of
${\mathbb Z}^w$-graded modules and \eqref{E0.1.1} should be replaced by
\begin{equation}
\label{E1.0.1}\tag{E1.0.1}
\Ext^i_{A^e}(A, A^e)\cong \begin{cases} 0 & i\neq d,\\
{^1A^\mu}(\bfl) & i=d, \end{cases}
\end{equation}
where ${\bfl}\in {\mathbb Z}^w$ and ${^1A^\mu}(\bfl)$ is the shift
of ${^1A^\mu}$ by degree $\bfl$, and here $\bfl$ is called the \emph{Artin Schelter (AS) index}.  
In this case, the Nakayama automorphism
$\mu$ is a $\mb{Z}^w$-graded algebra automorphism of $A$.

For simplicity of exposition, \emph{in the remainder of this introduction $A$ is
a noetherian, connected (that is, $A_0 = k$) $\mb{N}$-graded skew CY algebra.}  
Equivalently (as we will note in Lemma~\ref{xxlem1.2}), $A$ is a noetherian Artin-Schelter regular algebra.
In this case $\mu_A$ is unique since there are no non-trivial inner automorphisms of $A$.  
Suppose further that $A$ is a left $H$-module algebra for some Hopf algebra $H$, where 
each graded piece $A_i$ is a left $H$-module.    Let $A \# H$ be the corresponding smash product 
algebra; we review the definition in Section~\ref{xxsec2}, but we note here that it is the same as 
the skew group algebra $A \rtimes G$ in the important special case that  
$H = kG$ is the group algebra of a group $G$ acting by automorphisms on $A$.
There is also a well-established theory of homological determinant of a Hopf algebra
action \cite{JoZ, JiZ, KKZ} that determines a map $\hdet: H \to k$; this is reviewed in Section~\ref{xxsec3}.  
 
Our first result provides an identity for the Nakayama automorphism of $A \# H$ in terms of those of $A$ and
$H$, along with the homological determinant $\hdet$ of the action.
It helps to explain how the smash product $A \# H$ may become
Calabi-Yau even when $A$ is only skew Calabi-Yau.  The idea is that even if $\mu_A$ is not an inner automorphism, 
$\mu_{A \# H}$ may become inner; see also Corollary~\ref{xxcor0.6} below.
\begin{theorem} 
\label{xxthm0.2} Let $H$ be a finite dimensional Hopf algebra acting
on a noetherian connected graded skew CY algebra $A$, such that each $A_i$ is a left $H$-module and $A$ is a
left $H$-module algebra. Then
\begin{equation}
\label{HI1}\tag{HI1}
\mu_{A\# H}=\mu_A\# (\mu_H\circ \Xi^l_{\hdet}),
\end{equation}
where $\hdet$ is the homological determinant of the $H$-action on $A$, and $\Xi^l_{\hdet}$ is the corresponding 
left winding automorphism (see Section~\ref{xxsec1}).
\end{theorem}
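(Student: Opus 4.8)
The plan is to verify the defining property of the Nakayama automorphism for $E:=A\#H$ directly: I would compute the $E$-bimodule $\Ext^d_{E^e}(E,E^e)$ and read the automorphism off its bimodule structure, then check that $\mu_A\#(\mu_H\circ\Xi^l_{\hdet})$ is a legitimate representative. First I would record that $E$ is again graded skew CY of cohomological dimension $d$. Since $H$ is finite dimensional it is Frobenius, hence skew CY of dimension $0$ with Nakayama automorphism $\mu_H$; combined with the homological smoothness of $A$ this gives that $E$ is homologically smooth with $\Ext^i_{E^e}(E,E^e)$ concentrated in degree $d$. Note that $E$ is graded but no longer connected ($E_0=H$), so by Lemma~\ref{xxlem1.10} the Nakayama automorphism is determined only up to inner automorphisms; accordingly the goal is to exhibit the specific bimodule isomorphism $\Ext^d_{E^e}(E,E^e)\cong {}^1E^{\,\psi}(\bfl)$ with $\psi=\mu_A\#(\mu_H\circ\Xi^l_{\hdet})$.

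The key reduction is to relate the enveloping algebra $E^e=(A\#H)\otimes(A\#H)\op$ back to $A^e=A\otimes A\op$. Here I would use that $A\#H$ is free of finite rank as a left and as a right module over each of $A$ and $H$, so that $E^e$ is free of finite rank over $A^e$ and the change-of-rings (Eckmann--Shapiro type) adjunctions between $E$-bimodules and $A$-bimodules equipped with a compatible $H$-structure become available. Feeding the skew CY datum $\Ext^d_{A^e}(A,A^e)\cong {}^1A^{\mu_A}(\bfl)$ for $A$ through this reduction identifies the $A$-factor of the answer with $\mu_A$, while the dimension-$0$ Frobenius structure of $H$ contributes $\mu_H$ to the $H$-factor. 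The naive outcome of this step is therefore $\mu_A\#\mu_H$.

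It remains to locate the correction to the $H$-factor, which is where the homological determinant enters. The $H$-action on $A$ induces an $H$-action on the one-dimensional top group $\Ext^d_A(k,A)$ (equivalently on the top local cohomology $\HB^d_{\fm}(A)$), and by the definition reviewed in Section~\ref{xxsec3} this action is exactly the character $\hdet$. When this twist is transported through the change-of-rings isomorphism of the previous paragraph, it modifies the $H$-factor of the Nakayama automorphism by the left winding automorphism $\Xi^l_{\hdet}$, producing $\mu_H\circ\Xi^l_{\hdet}$ on the $H$-side and hence the identity (HI1). Along the way one verifies that $\mu_A\#(\mu_H\circ\Xi^l_{\hdet})$ is indeed a well-defined graded algebra automorphism of $A\#H$, i.e.\ that the prescribed maps on the two factors are compatible with the module-algebra multiplication.

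The main obstacle is precisely the bookkeeping of $H$-equivariance in the middle step and the matching of the resulting twist to a winding automorphism: one must follow the $H$-action, in Sweedler notation, through the duality $\RHom_{E^e}(E,E^e)$ and confirm that the character it picks up is assembled into $\Xi^l_{\hdet}$ rather than into its right-handed or antipode-twisted variants. This is where the antipode $S$ of $H$ intervenes and where the precise left-versus-right form of the final identity is decided; by contrast, the reduction to $A^e$ and the separate identification of the $\mu_A$ and $\mu_H$ contributions are comparatively formal once the change-of-rings framework is set up.
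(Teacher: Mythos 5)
There is a genuine gap, and it occurs in your very first step. Theorem~\ref{xxthm0.2} allows an \emph{arbitrary} finite dimensional Hopf algebra $H$, and your claim that ``$H$ is Frobenius, hence skew CY of dimension $0$'' is false in that generality: a finite dimensional Hopf algebra is self-injective, so its global dimension is $0$ or $\infty$, and when $H$ is not semisimple it satisfies the Ext-condition \eqref{E0.1.1} in degree $0$ (this is the classical Nakayama automorphism) but is \emph{never} homologically smooth. Consequently the assertion that smoothness of $A$ combined with this property of $H$ makes $E=A\#H$ homologically smooth with $\Ext^i_{E^e}(E,E^e)$ concentrated in degree $d$ breaks down exactly when $H$ is not semisimple; indeed the paper needs semisimplicity of $H$ even to conclude that $E$ is skew CY (Theorem~\ref{xxthm4.1}(c)). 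The concentration statement is still \emph{true} for general finite dimensional $H$, but it cannot be reached by your route, and any subsequent appeal to Van den Bergh duality $\RHom_{E^e}(E,N)\cong U[-d]\otimes^L_{E^e}N$ for $E$ also requires the smoothness you do not have.

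The second problem is that your middle step is not an Eckmann--Shapiro situation, so the ``comparatively formal'' reduction is not formal at all. Although $E^e$ is indeed free of finite rank over $A^e$, the $E^e$-module you must resolve, namely $E$ itself, is not induced from the $A^e$-module $A$: one computes $E^e\otimes_{A^e}A\cong E\otimes_A E$, which surjects onto $E$ but is much bigger. Making this step work is precisely the hard point in the earlier literature on this identity (spectral-sequence arguments for the extension $A\subset A\#H$, which degenerate under hypotheses such as semisimple $H$, or $N$-Koszul $A$ with involutory $H$ as in \cite{LiWZ}); the ``bookkeeping'' you defer is where those extra hypotheses would be forced upon you. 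The paper's proof is designed to avoid this obstruction entirely: it never resolves $E$ over $E^e$. Instead it computes the balanced dualizing complex of $B=A\#H$ by local cohomology, obtaining $R^d\Gamma_{\fm_B}(B)^*\cong (R^d\Gamma_{\fm_A}(A)\# H)^*\cong {}^1H^{\sigma}\#\, {}^{\mu_A}A^1(-\bfl)$ with $\sigma=\mu_H^{-1}$ (Lemmas~\ref{xxlem3.1} and \ref{xxlem3.2}, Proposition~\ref{xxpro2.7}, Lemma~\ref{xxlem2.6}), reads off the automorphism from the explicit generator $\fu\#\fe$ (Lemma~\ref{xxlem2.10}) --- this is where $\hdet$ and the left winding automorphism enter, much as you predicted --- and only then converts the result into the statement about $\Ext^d_{B^e}(B,B^e)$ via Van den Bergh's rigidity theory (Lemma~\ref{xxlem3.5}), which needs only the Gorenstein property, not smoothness. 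If you restrict to semisimple $H$, your plan could likely be completed along the lines of \cite{LiWZ}, but as written it does not prove the theorem as stated.
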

Theorem \ref{xxthm0.2} will be proved in Section \ref{xxsec4}.
%Undefined terms will be explained in Sections \ref{xxsec1} and
%\ref{xxsec2}. 
In the special case where $\mu_A=Id_A$ and $\mu_H=Id_H$,
Theorem \ref{xxthm0.2} partially recovers some results in a number of papers \cite{LM, Fa, WZhu, LiWZ}.   
A natural question is if Theorem
\ref{xxthm0.2} holds when $H$ is infinite dimensional or when
$A$ is ungraded. 
%[Question \ref{xxque4.3}]. 
When $H$ is an involutory CY Hopf algebra and $A$ is an $N$-Koszul CY algebra,
the question was answered in \cite[Theorem 2.12]{LiWZ}, but in general the question is open.

By the term \emph{homological identity} which we use in the title of the paper, we mean 
an equation involving several invariants, at least one of which is defined homologically.
Equation \eqref{HI1} is of course an example, and we note that other homological identities containing the Nakayama automorphism with interesting applications have appeared in \cite[Theorem 0.3]{BZ} and \cite[Theorem 0.1]{CWZ}.  
%Both of these have interesting applications, see, for example,
%\cite[Theorems 0.4 and 0.6 and Corollary 0.6]{BZ} and \cite[Theorems, 0.4 and 0.5 and Proposition 0.7]{CWZ}.

%Characterizing the Nakayama automorphisms is one of the most
%fundamental projects in the study of skew Calabi-Yau algebras. One
%would wish to have clean and useful homological identities like
%\eqref{HI1}. 
Next, we prove a homological identity which shows how the Nakayama 
automorphism changes under a graded twist in the sense of \cite{Zh}.
Let $\sigma$ be a graded algebra automorphism of $A$ and let $A^{\tilde{\sigma}}$ denote the left 
graded twist of $A$ associated to the twisting system $\tilde{\sigma}:=
\{\sigma^n\mid n\in {\mathbb Z}\}$.   Recall that $A^{\tilde{\sigma}}$ is an algebra with the 
same underlying graded vector space as $A$, but with new product $a \ast b = \sigma^{\mid \mid b \mid \mid}(a) b$ for 
homogeneous elements $a,b$, where $\mid \mid b \mid \mid$ indicates the degree of the element $b$.   Then it is easy to check 
using the properties of graded twists that $A$ is skew CY if
and only if $A^{\tilde{\sigma}}$ is (see Theorem~\ref{xxthm5.4} below).
For a nonzero scalar $c$, define a graded algebra automorphism $\xi_c$ of $A$ by $\xi_c(a)=
c^{\mid \mid a\mid \mid} a$ for all homogeneous elements $a\in A$.

\begin{theorem}
\label{xxthm0.3}
Let $A$ be a noetherian connected graded skew CY algebra, and let $\bfl$ be the AS index of $A$. Then
\begin{equation}\label{HI2}\tag{HI2}
\mu_{A^{\tilde{\sigma}}}=\mu_A\circ {\sigma^{\bfl}}\circ
\xi_{\hdet (\sigma)}^{-1}.
\end{equation}
\end{theorem}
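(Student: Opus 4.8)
The plan is to compute $\Ext^d_{B^e}(B, B^e)$, where $B := A^{\tilde{\sigma}}$, by transporting the corresponding computation for $A$ across the equivalence induced by the graded twist, and then to read off $\mu_B$ from the resulting bimodule. The input is the defining isomorphism $\Ext^d_{A^e}(A, A^e)\cong {}^1A^{\mu_A}(\bfl)$ together with Theorem~\ref{xxthm5.4}, which already guarantees that $B$ is skew CY of the same cohomological dimension $d$; since $B$ and $A$ have the same Hilbert series, the AS index is again $\bfl$, so only the automorphism part of the dualizing bimodule remains to be pinned down.

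First I would make precise the sense in which $B^e$ is a twist of $A^e$. A direct computation with the twisted products shows that $B = A^{\tilde{\sigma}}$ is the left twist of $A$ while $B^{op} = (A^{op})^{\tilde{\sigma}}$ is the right twist of $A^{op}$ by the same system, so that $B^e = B\otimes B^{op}$ is obtained from $A^e = A\otimes A^{op}$ by left-twisting the first tensor factor and right-twisting the second (using the bigrading on $A^e$). This combined twist induces an exact functor $\mathcal{T}$ from graded $A$-bimodules to graded $B$-bimodules which is the identity on underlying graded spaces, sends $A$ to $B$ and the free bimodule $A^e$ to $B^e$, and carries a finitely generated free $A^e$-resolution of $A$ to one of $B$. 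Consequently $\mathcal{T}$ commutes with the formation of the relevant bimodule $\Ext$, giving $\Ext^d_{B^e}(B, B^e)\cong \mathcal{T}\bigl({}^1A^{\mu_A}(\bfl)\bigr)$.

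The heart of the proof is then the explicit identification of $\mathcal{T}\bigl({}^1A^{\mu_A}(\bfl)\bigr)$ with a standard dualizing bimodule ${}^1B^{\mu_B}(\bfl)$. Tracking the left and right twisted actions on the underlying space $A$ shows that the left $B$-action is again the standard one, while the right action is conjugated by a power of $\sigma$, and two separate contributions appear. The degree shift by $\bfl$ offsets the twisting exponent on the right action, producing the factor $\sigma^{\bfl}$; this is a combinatorial consequence of the rule that the twist uses the internal degree of an element, together with the fact that the canonical generator of ${}^1A^{\mu_A}(\bfl)$ sits in degree $-\bfl$. The second contribution is the scalar needed to renormalize that canonical generator so that the twisted bimodule is presented in standard form: by the definition of the homological determinant, $\sigma$ rescales the relevant top class of the local cohomology (equivalently, the generator of the dualizing bimodule) by $\hdet(\sigma)$, and carrying this scalar through the identification contributes exactly $\xi_{\hdet(\sigma)}^{-1}$. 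Combining the inherited automorphism $\mu_A$ with these two corrections yields $\mu_B = \mu_A\circ \sigma^{\bfl}\circ \xi_{\hdet(\sigma)}^{-1}$.

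The main obstacle I anticipate is the bookkeeping in the previous paragraph, and in particular the fact that $B^e$ is genuinely a two-sided (left-and-right) twist of $A^e$ rather than a naive simultaneous twist of both actions: the naive functor that twists both sides by $\sigma$ fails to respect associativity because the left and right twists interfere, so $\mathcal{T}$ must be constructed with care, for instance by composing the left twist on the first factor with the right twist on the second, or by reducing the right twist to a left twist via the isomorphism $A^e\cong (A^e)^{op}$. Verifying that this $\mathcal{T}$ is compatible with the bimodule $\Ext$ computation, and cleanly isolating the separate roles of the shift $\bfl$ and the scalar $\hdet(\sigma)$ in the twisted right action, is where the essential work lies; once this is done, the identity follows by comparing the two presentations of the dualizing bimodule.
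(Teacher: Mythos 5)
Your endgame matches the paper's---the two contributions you identify, $\sigma^{\bfl}$ coming from the degree of the generator and $\xi_{\hdet(\sigma)}^{-1}$ coming from the rescaling of that generator, are exactly what appears in the paper's computation---but the transport mechanism on which you build the whole argument does not exist, and this is a genuine gap rather than bookkeeping. The functor $\mathcal{T}$ you posit (exact on all graded $A$-bimodules, the identity on underlying graded spaces, sending $A\mapsto B$ and $A^e\mapsto B^e$, carrying finitely generated free resolutions to free resolutions compatibly with dualization) cannot exist: the justification you give for the step ``consequently $\mathcal{T}$ commutes with the formation of the relevant bimodule $\Ext$'' would apply verbatim with coefficients in $A$ itself, and would then yield $\Ext^*_{A^e}(A,A)\cong \Ext^*_{B^e}(B,B)$, i.e., Hochschild cohomology would be a graded-twist invariant. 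It is not: $B=k_q[x,y]$ is a graded twist of $A=k[x,y]$ (Example~\ref{xxex5.5}), yet for $q$ not a root of unity the graded center $\Ext^0_{B^e}(B,B)$ is $k$ while $\Ext^0_{A^e}(A,A)=k[x,y]$. The associativity failure you flag as the ``main obstacle'' is a symptom of this obstruction, not a technicality to be massaged away: the combined operation (left twist in the first factor, right twist in the second) is not a twisting system on $A^e$ in Zhang's sense, so his category equivalence simply does not apply to it, and neither of your proposed repairs changes that. Relatedly, your identification of the $\hdet$ contribution is incoherent inside your own framework: a functor determined by the algebra twist alone, acting as the identity on underlying spaces, can only produce output expressible through $\mu_A$, $\sigma$, and degrees; when you say ``$\sigma$ rescales the generator by $\hdet(\sigma)$'' you are invoking an action of $\sigma$ on the dualizing bimodule, which is extra structure that $\mathcal{T}$, as you defined it, does not carry.

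That extra structure is precisely what the paper's proof is organized around, and it shows what a corrected version of your outline must do. The paper restricts to $kG$-equivariant bimodules, where the twist is defined by $a\cc m\cc b=\sigma^{|m|+|b|}(a)\,\alpha_{\sigma}^{|b|}(m)\,b$ as in \eqref{E5.0.3}; the equivariant datum $\alpha_\sigma$ is exactly what reconciles the interfering left and right twists. Because bimodule $\Ext$ cannot be transported across a twist, the paper transports local cohomology instead: $R^d\Gamma_{\fm_A}$ is computed from the one-sided module structure, for which left twisting \emph{is} an honest category equivalence, while equivariance carries the bimodule structure along (Lemma~\ref{xxlem5.1}), and graded duals commute with twists (Lemma~\ref{xxlem5.2}). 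The computation in Theorem~\ref{xxthm5.4}(a) then reads off $a\cc\fe=\sigma^{\bfl}(a)\fe=\fe\,\mu_A(\sigma^{\bfl}(a))$ and $\fe\cc a=\alpha_{\sigma}^{|a|}(\fe)\,a=\fe\,\hdet(\sigma^{|a|})\,a$, which are your two contributions, with $\hdet$ now entering legitimately through $\alpha_\sigma(\fe)=\hdet(\sigma)\fe$ (Definition~\ref{xxdef3.6}). Finally, Lemma~\ref{xxlem3.5} (Van den Bergh's dualizing complex theory) converts this local-cohomology description of the Nakayama automorphism back into the $\Ext^d_{A^e}(A,A^e)$ form required by Definition~\ref{xxdef0.1}. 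To salvage your argument you would have to replace $\mathcal{T}$ by this equivariant twist on the appropriate subcategory and route the homological computation through a tool that needs only one-sided structures; the direct bimodule-category transport is not available.
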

\noindent Note here that $\hdet$ is defined using the natural action of the group of graded automorphisms $H = \Autz(A)$ on $A$.
Theorem \ref{xxthm0.3} will be proved in Section \ref{xxsec5}.   The result has many applications; for example, 
if one wants to prove a result about the Nakayama automorphism, one can often perform a graded twist to reduce 
to the case of an algebra with a simpler Nakayama automorphism (for example, see Lemma~\ref{xxlem6.2} below).   Similarly, in some cases one can twist a skew CY algebra to obtain a CY one, and so this gives a method of producing more CY algebras; 
see Section \ref{xxsec7} for examples.

Another goal of this paper is to demonstrate a strong connection between
the Nakayama automorphism $\mu_A$ and the homological determinant $\hdet$.
This is already evidenced by identities \eqref{HI1}, \eqref{HI2}
and \cite[Theorem 0.1]{CWZ}, and it is reinforced by the next result.   
%If
%$A$ is a ${\mathbb Z}$-graded algebra, let $\Autz(A)$ denote the group
%of all ${\mathbb Z}$-graded algebra automorphisms of $A$.
\begin{theorem}
\label{xxthm0.4}  Let $A$ be a noetherian connected graded Koszul skew CY algebra.  Then  
\begin{equation}
\label{HI3}\tag{HI3}
\hdet(\mu_A)=1.
\end{equation}
\end{theorem}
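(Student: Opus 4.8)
The plan is to reduce the identity to a computation on the one-dimensional socle of the Koszul dual algebra. First I would record the structure that is available. Since $A$ is noetherian, connected graded, and skew CY, it is Artin--Schelter regular by Lemma~\ref{xxlem1.2}; write $d = \gldim A$. Because $A$ is in addition Koszul, the Koszul dual $E := A^{!} = \bigoplus_{i=0}^{d}\Ext^{i}_{A}(k,k)$ is a finite-dimensional graded Frobenius algebra with one-dimensional socle $E_{d}$, and the AS index satisfies $\bfl = d$ because the minimal free resolution of $k$ is linear. I would work throughout with this resolution and its bimodule refinement $K_{i} = A \otimes E_{i}^{\,*} \otimes A$, the bimodule Koszul resolution of $A$ over $A^{e}$, whose self-duality produces the isomorphism $\Ext^{d}_{A^{e}}(A,A^{e}) \cong {^1 A^{\mu_A}}(\bfl)$.

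My first step is to reduce $\hdet(\mu_A)$ to an action on a one-dimensional space. Recall from Section~\ref{xxsec3} that for $g \in \Autz(A)$ the homological determinant $\hdet(g)$ is extracted from the action of $g$ on the top local cohomology of $A$. Specializing to the Koszul case, this action is computed on the one-dimensional top Tor group $\Tor^{A}_{d}(k,k) \cong E_{d}^{\,*}$, equivalently, dually, on the socle $E_{d}$. Thus, up to the convention fixed in Section~\ref{xxsec3}, $\hdet(g)$ is precisely the scalar by which the functorially induced automorphism acts on $E_{d}$, and the theorem becomes the assertion that $\mu_A$ acts trivially on $E_{d}$.

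The heart of the argument is a Frobenius-symmetry computation. At the level of $E$, the self-duality of $K_{\bullet}$ under $\Hom_{A^{e}}(-,A^{e})$ is exactly the statement that $E$ is Frobenius, with a homogeneous nondegenerate pairing $\langle -,-\rangle \colon E_{i} \otimes E_{d-i} \to E_{d} \cong k$ whose associated Nakayama automorphism $\nu_{E}$ is the automorphism of $E$ induced by $\mu_A$. Granting this, I would evaluate the defining relation $\langle x,y\rangle = \langle y,\nu_{E}(x)\rangle$ at a socle generator $x = e \in E_{d}$ and $y = 1 \in E_{0}$. Since $\nu_{E}$ is graded we may write $\nu_{E}(e) = c\,e$, and the relation reads $\langle e,1\rangle = c\,\langle 1,e\rangle$; as both sides equal the nonzero Frobenius functional evaluated on $e$, this forces $c = 1$. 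Hence $\mu_A$ acts trivially on $E_{d}$, and $\hdet(\mu_A) = 1$.

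The main obstacle is the pair of identifications underlying the second and third steps, both of which hinge on the conventions of Section~\ref{xxsec3}: that the paper's homological determinant agrees with the socle action, and, more delicately, that $\mu_A$ induces \emph{exactly} the Frobenius Nakayama automorphism $\nu_{E}$ with no spurious degree-rescaling factor $\xi_{c}$. The latter is the technical crux, since such a factor would contribute $c^{\,d}$ to the socle action; I would establish it by comparing, at the chain level, the self-dualizing isomorphism of the bimodule Koszul resolution with the Frobenius form of $E$, and I would sanity-check it against examples such as the quantum plane, where $\mu_A = \operatorname{diag}(q,q^{-1})$ visibly has $\hdet(\mu_A) = q \cdot q^{-1} = 1$ while $\nu_{E}$ fixes the socle generator $\xi\eta$.
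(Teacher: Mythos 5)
Your overall strategy is the same one the paper uses -- pass to the Koszul dual $E = A^!$, which is Frobenius, and exploit the fact that the classical Nakayama automorphism of a graded Frobenius algebra fixes the socle (your ``heart'' computation is exactly the paper's Lemma~\ref{xxlem3.11}). But the step you flag as the ``technical crux'' is a genuine gap, and moreover the claim you would need is false in the strong form you state it. The correct relation, which the paper quotes from \cite[Theorem 9.2]{VdB1} rather than reproves, is
\[
\mu_A\vert_V \;=\; \xi_{-1}^{\,d+1}\circ(\nu\vert_{V^*})^*,
\]
so the Frobenius Nakayama automorphism $\nu$ of $E$ differs from anything ``induced by $\mu_A$'' both by the sign twist $\xi_{-1}^{d+1}$ (nontrivial whenever $d$ is even) and by a dualization/transpose, which is not the same as the contravariantly induced functorial action. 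Your quantum-plane sanity check cannot detect this: there $d=2$, the sign twist $\xi_{-1}$ is genuinely present in $\nu$, but it contributes $(-1)^{d(d+1)}=1$ on the socle, so the discrepancy is invisible exactly where you look. Establishing the precise chain-level relation is the substantial content of Van den Bergh's theorem; proposing to ``compare at the chain level'' is not a proof but a restatement of that theorem. A second, quieter gap is your opening identification of $\hdet(g)$ with the scalar action on $E_d$ for an \emph{arbitrary} graded automorphism $g$: the paper only establishes such a compatibility (Lemma~\ref{xxlem5.3}(a)) for scalar automorphisms $\xi_\delta$, and its proof is arranged precisely so that nothing more is needed.

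What your proposal is missing is the device the paper uses to neutralize both issues simultaneously: the graded-twist identity \eqref{HI2}. Via Lemma~\ref{xxlem6.2} (which uses Theorem~\ref{xxthm5.4}, a polynomial extension, and possibly a finite extension of the base field), one reduces to the case $\mu_A = \xi_c$, without changing $\hdet \mu_A$ or the Koszul AS regular hypotheses. In that scalar situation Van den Bergh's formula gives that $\nu$ acts on $E_1 = V^*$ by $(-1)^{d+1}c$, hence on the socle by $\bigl((-1)^{d+1}c\bigr)^d = c^d$; Lemmas~\ref{xxlem3.11} and \ref{xxlem5.3}(a) then force $c^d = 1$, while $\hdet \mu_A = c^{\bfl} = c^d$ again by Lemma~\ref{xxlem5.3}(a). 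Crucially, once $\mu_A$ is scalar, the transpose-versus-inverse ambiguities you would otherwise have to track can only replace $c$ by $c^{-1}$, which cannot affect the conclusion $c^d=1$. So your skeleton is right, but to complete it you must (i) cite or prove the exact sign-twisted dual relation between $\mu_A$ and $\nu$, and (ii) either prove the $\hdet$--socle compatibility for general automorphisms or, as the paper does, twist first so that only the scalar case is ever needed.
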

\noindent In fact, in all examples of graded skew CY algebras we have observed, $\hdet(\mu_A) = 1$, 
and we conjecture that this is true in much wider generality.  In particular, one may consider AS Gorenstein 
rings, which satisfy \eqref{E0.1.1} but for which the homological smoothness condition is 
replaced by the weaker assumption that $A$ has finite injective dimension.  In fact, homological identities \eqref{HI1} and \eqref{HI2} 
are true in the AS Gorenstein case;  we give the more general versions in the body of the paper.    We conjecture that \eqref{HI3} 
also holds for all connected graded AS Gorenstein rings (Conjecture~\ref{xxconj6.5}).

The above homological identities have some immediate consequences.  
The following result may be useful in further study of the homological determinant.
\begin{corollary} 
Let $A$ be a connected graded skew CY algebra, and assume that Conjecture~\ref{xxconj6.5} holds. 
\label{xxcor0.5}
For every $\varphi\in \Autz(A)$, then
$$\hdet \varphi=(\mu_{(A[t;\varphi])}(t)) \; t^{-1},$$
where $A[t;\varphi]$ is the corresponding skew polynomial ring.
\end{corollary}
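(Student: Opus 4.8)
The plan is to apply Conjecture~\ref{xxconj6.5} to the skew polynomial ring $B := A[t;\varphi]$ and reduce the homological determinant of its Nakayama automorphism to data living over $A$. First I would record that $B$ is again a (noetherian) connected graded skew CY algebra: the Ore extension of a noetherian AS regular algebra by a graded automorphism (with zero derivation) is noetherian AS regular, with $\gldim B = \gldim A + 1$. In particular Conjecture~\ref{xxconj6.5} applies to both $A$ and $B$, yielding $\hdet_A(\mu_A) = 1$ (this is already Theorem~\ref{xxthm0.4} when $A$ is Koszul) and $\hdet_B(\mu_B) = 1$, where $\hdet_A$ is taken with respect to the $\Autz(A)$-action and $\hdet_B$ with respect to the $\Autz(B)$-action.

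Next I would pin down the shape of $\mu_B$. Since $t$ is a normal nonzerodivisor with $B/(t) \cong A$, I would show that $\mu_B(t) = c\,t$ for a nonzero scalar $c$, so that the ideal $(t)$ is $\mu_B$-stable and $c = \mu_B(t)\,t^{-1}$ is exactly the quantity to be identified; consequently $\mu_B$ descends to an automorphism $\bar\mu \in \Autz(A)$ of $B/(t) = A$. The restriction I claim is $\bar\mu = \mu_A \circ \varphi^{-1}$, which I would obtain either from a change-of-rings computation of $\Ext^{\bullet}_{B^e}(B, B^e)$ along $A \hookrightarrow B$, or by exhibiting $B$ as a graded twist of the polynomial ring $A[t]$ in the $t$-direction and invoking Theorem~\ref{xxthm0.3} (compare Lemma~\ref{xxlem6.2}). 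Either route returns the restriction to $A$ cleanly while leaving the scalar $c$ on $t$ undetermined, which is precisely why Conjecture~\ref{xxconj6.5} is needed to finish.

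The crux is a multiplicativity formula for the homological determinant under this extension: for every $\psi \in \Autz(B)$ with $\psi(t) = \lambda t$ and induced $\bar\psi \in \Autz(A)$, one has $\hdet_B(\psi) = \lambda \cdot \hdet_A(\bar\psi)$. I would prove this from the short exact sequence $0 \to B(-1) \xrightarrow{\,\cdot t\,} B \to A \to 0$ together with the change-of-rings computation of the top local cohomology (equivalently the top $\Ext$ against $k$), tracking the action of $\psi$ on the one-dimensional top cohomology: the $A$-part contributes $\hdet_A(\bar\psi)$ and the extra $t$-direction contributes the factor $\lambda$. Applying this with $\psi = \mu_B$, $\lambda = c$, and $\bar\psi = \bar\mu = \mu_A\varphi^{-1}$, and using that $\hdet_A$ is a group homomorphism, gives
\[
1 = \hdet_B(\mu_B) = c \cdot \hdet_A(\mu_A\varphi^{-1}) = c \cdot \hdet_A(\mu_A)\,\hdet_A(\varphi)^{-1} = c \cdot \hdet_A(\varphi)^{-1},
\]
where the last equality invokes $\hdet_A(\mu_A)=1$ from Conjecture~\ref{xxconj6.5}. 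Hence $c = \hdet_A(\varphi) = \hdet\varphi$, i.e. $\hdet\varphi = \mu_{A[t;\varphi]}(t)\,t^{-1}$, as claimed.

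I expect the main obstacle to be establishing the $\hdet$-multiplicativity formula $\hdet_B(\psi) = \lambda\,\hdet_A(\bar\psi)$ with the correct normalization, since this requires identifying the top local cohomology of the skew extension and the induced scalar action precisely (getting the exponent on $\lambda$ and the absence of an inverse right). A secondary technical point is proving that $\mu_B(t)$ is a genuine scalar multiple of $t$ rather than involving degree-one normal elements inherited from $A_1$; I would settle this via the characterization of $(t)$ as the (up to scalar) distinguished $\mu_B$-stable ideal whose quotient is again skew CY, or simply read it off from the twist description of $B$ via Theorem~\ref{xxthm0.3}.
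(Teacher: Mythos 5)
Your proposal is correct and follows essentially the same route as the paper, which proves this statement as Lemma~\ref{xxlem7.1}(b): there too one notes that $\mu_B$ is ${\mathbb Z}^2$-graded (so $\mu_B(t)=ct$ for a scalar $c$), relates the automorphism induced by $\mu_B$ on $B/(t)\cong A$ to $\mu_A$ and $\varphi$ via Lemma~\ref{xxlem1.6}, applies the restriction formula \eqref{E3.9.1} to the normal element $t$, and invokes Conjecture~\ref{xxconj6.5} for both $B$ and $A$. The only differences are organizational: the paper factors the argument through the auxiliary automorphism $\tau(at^n)=\varphi(a)t^n$ and the general normal-element statement Lemma~\ref{xxlem7.1}(a), and the ``multiplicativity formula'' $\hdet_B(\psi)=\lambda\,\hdet_A(\bar\psi)$ that you flag as the main obstacle need not be proved from scratch---it is precisely \eqref{E3.9.1}, i.e.\ the result of Jing--Zhang quoted in Example~\ref{xxex3.8}(e).
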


The next result provides several methods for constructing
CY algebras starting with skew CY algebras. 
\begin{corollary}
\label{xxcor0.6}  Let $A$ be connected graded skew CY and let $\langle \mu_A\rangle$ be the subgroup of $\Autz(A)$ generated
by $\mu_A$.  Assume that $\hdet \mu_A = 1$.
\begin{enumerate}
\item The skew polynomial ring $A[t; \mu_A]$ is CY.
\item The skew group algebra 
$A\rtimes \langle \mu_A\rangle$ is CY.
\item
Suppose that $\mu_A$ has finite order. Then the fixed subring
$A^{\langle \mu_A\rangle}$  is
AS Gorenstein with trivial Nakayama automorphism.
\end{enumerate}
\end{corollary}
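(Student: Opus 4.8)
The plan is to treat each part by computing the Nakayama automorphism of the algebra in question and checking that it is inner, which for a connected graded algebra forces it to be the identity. For part (a) I would first record the Nakayama automorphism of a skew polynomial ring: either by a direct computation with the enveloping-algebra resolution of the Ore extension $A[t;\varphi]$ (assembled from that of $A$), or by the reasoning behind Corollary~\ref{xxcor0.5}, one finds that $\mu_{A[t;\varphi]}$ restricts on $A$ to $\mu_A\circ\varphi^{-1}$ and sends $t\mapsto(\hdet\varphi)\,t$, the latter being the content of Corollary~\ref{xxcor0.5}. Specializing to $\varphi=\mu_A$ and using the hypothesis $\hdet\mu_A=1$, the $A$-component becomes $\mu_A\circ\mu_A^{-1}=\id$ and the $t$-component becomes $(\hdet\mu_A)\,t=t$. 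Hence $\mu_{A[t;\mu_A]}=\id$ and $A[t;\mu_A]$ is CY.

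For part (b) I would split on the order of $\mu_A$. If $\mu_A$ has finite order, then $G=\langle\mu_A\rangle$ is finite and $A\rtimes G=A\# kG$ with $kG$ a finite-dimensional Hopf algebra, so Theorem~\ref{xxthm0.2} applies. Since $kG$ is symmetric we have $\mu_{kG}=\id$, and since $\hdet\mu_A=1$ the homological determinant is the trivial character on $G$, so its winding automorphism $\Xi^l_{\hdet}$ is the identity; thus $\mu_{A\rtimes G}=\mu_A\#\id$. The key point is that this automorphism is inner: if $g\in G$ is the canonical unit implementing $\mu_A$, then conjugation by $g$ sends $a\mapsto\mu_A(a)$ for $a\in A$ and fixes $G$ pointwise, so it coincides with $\mu_A\#\id$. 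Hence $\mu_{A\rtimes G}$ is inner and $A\rtimes G$ is CY. If instead $\mu_A$ has infinite order, then $A\rtimes\langle\mu_A\rangle\cong A[t^{\pm1};\mu_A]$ is the Ore localization of $A[t;\mu_A]$ at the powers of $t$; homological smoothness and the Nakayama automorphism pass to this localization, so by part (a) the Nakayama automorphism is again trivial and the localization is CY.

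For part (c), assume $\mu_A$ has finite order $n$ with $\ch k\nmid n$, put $G=\langle\mu_A\rangle$, and let $e=\tfrac1n\sum_{g\in G}g\in kG$, so that $A^{G}\cong e(A\rtimes G)e$. By part (b) the algebra $B:=A\rtimes G$ is CY, and since $\hdet$ is trivial on $G$, the J{\o}rgensen--Zhang criterion for Gorensteinness of invariants shows that $A^{G}=eBe$ is AS Gorenstein. To compute its Nakayama automorphism, the decisive observation is that $G=\langle\mu_A\rangle$, so by definition every element of $A^{G}$ is fixed by $\mu_A$. Thus the inner Nakayama automorphism of $B$, namely conjugation by $g$, fixes $e$ (as $G$ is abelian) and restricts to the identity on the corner $A^{G}$; transferring this to the AS-Gorenstein Nakayama automorphism of $eBe$ yields $\mu_{A^{G}}=\id$.

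The main obstacle will be this last transfer in part (c): passing from the (inner) Nakayama automorphism of the skew group algebra $B=A\rtimes G$ to the AS-Gorenstein Nakayama automorphism of the corner $eBe=A^{G}$, which has strictly smaller homological dimension and is only Gorenstein rather than skew CY. Making this comparison precise, along with verifying that the Gorenstein property itself descends to $eBe$, is where the substantive work lies, and it is also where the hypothesis $\ch k\nmid n$ enters, in order to form the idempotent $e$.
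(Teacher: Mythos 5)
Your overall strategy (compute each Nakayama automorphism and show it is trivial, or inner) is the right one, and your part (b) is essentially the paper's own argument: in the finite-order case the paper's Corollary~\ref{xxcor4.2} (applied with $n=1$) combines \eqref{HI1} with $\mu_{kG}=\id$, $\Xi^l_{\hdet}=\id$, and innerness of $\mu_A\#\id$ via conjugation by $1\#\mu_A$; in the infinite-order case the paper likewise identifies $A\rtimes \langle \mu_A\rangle$ with the skew Laurent ring $A[t^{\pm 1};\mu_A]$ (Proposition~\ref{xxpro7.3}(b)). One omission even here: Theorem~\ref{xxthm0.2} only identifies the Nakayama automorphism, so it yields the Ext condition \eqref{E0.1.1} but not homological smoothness; to conclude CY (which by Definition~\ref{xxdef0.1} means skew CY plus inner Nakayama automorphism) you must also show $A\rtimes G$ is homologically smooth, which the paper gets from Theorem~\ref{xxthm4.1}(c) via \cite[Proposition 2.11]{LiWZ}, and which requires $kG$ to be semisimple.

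The genuine gaps are in (a) and (c). In (a), everything rests on the Ore-extension formula $\mu_{A[t;\varphi]}\vert_A=\mu_A\circ\varphi^{-1}$ and $\mu_{A[t;\varphi]}(t)=(\hdet\varphi)\,t$, and neither route you offer delivers it. Corollary~\ref{xxcor0.5} gives only the $t$-component, and in the paper that corollary (proved as Lemma~\ref{xxlem7.1}(b)) is established only under Conjecture~\ref{xxconj6.5}; citing it therefore makes your proof of (a) conditional on an open conjecture, whereas Corollary~\ref{xxcor0.6} assumes only $\hdet\mu_A=1$. The restriction to $A$ --- the component you actually need to be the identity --- is asserted with no justification at all; establishing it ``by a direct computation with the enveloping-algebra resolution'' is a substantial theorem in its own right, not a remark. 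The paper proves (a) unconditionally by a different device (Proposition~\ref{xxpro7.3}(a)): it identifies $A[t;\mu_A]$ with the $\mathbb{Z}^2$-graded twist $(A[t])^{\tilde{\sigma}}$, where $\sigma=(\id,\mu_{A[t]}^{-1})$, via $\sum a_it^i\mapsto\sum\mu_A^i(a_i)t^i$, and then \eqref{HI2} (Theorem~\ref{xxthm5.4}(a)) together with Lemma~\ref{xxlem6.1} gives $\mu_{(A[t])^{\tilde{\sigma}}}=\id$ outright. In (c), you have correctly isolated both the key observation (every element of $A^{G}$ is fixed by $\mu_A$, precisely because $G=\langle\mu_A\rangle$) and the correct first step ($A^{G}$ is AS Gorenstein by \cite[Theorem 3.3]{JoZ}), but the ``transfer'' of the inner Nakayama automorphism of $B=A\rtimes G$ to the corner $eBe$ is exactly the step you leave unproved, and it is where all the content lies. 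The paper never passes through $eBe$: it computes the local cohomology of $C=A^{G}$ directly, using \cite[(E3.1.1), Lemmas 3.2 and 3.5(d)]{KKZ} to obtain $R^d\Gamma_{\fm_C}(C)^*\cong({}^{\mu}A^1(-\bfl))^{G}$ as $C$-bimodules, observes that this equals ${}^1C^1(-\bfl)$ since $\mu\vert_C=\id$, and then invokes Lemma~\ref{xxlem3.5} to convert this into triviality of the Nakayama automorphism. To repair your version you would need either to reproduce that local-cohomology computation or to actually prove a corner-ring transfer statement; as written, (c) is a plan rather than a proof.
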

%By
%Proposition \ref{xxpro7.3}(a), for noetherian connected graded skew CY
%algebra $A$, there exists a noetherian connected graded Calabi-Yau
%algebra $B$ and a regular normal element $t\in B_1$ such that $A=B/(t)$.
\begin{comment}
Our third goal is to suggest some new numerical invariants for
noncommutative algebras by using $\mu_A$. It is important to
define useful invariants for noncommutative algebras/spaces. Note
that the identities in above three theorems contains both $\mu_A$ and
$\hdet$, which are invariants of the algebra $A$. All theorems above
should be helpful for computing both $\mu_A$ and $\hdet$ in some cases.
When the order of $\mu_A$, denoted by $o(\mu_A)$, is finite, the number
$o(\mu_A)$ is a numerical invariant of $A$. In general, $o(\mu_A)$ is
infinite, but it becomes finite after a graded twist.
\end{comment}
Using homological identity \eqref{HI2}, one may study the Nakayama automorphisms 
of the entire family of graded twists of a given skew CY algebra $A$.  Let $o(\mu_A)$ be 
the order of the Nakayama automorphism $\mu_A$.   In most cases, one can twist 
$A$ to produce an algebra with Nakayama automorphism of finite order.
\begin{corollary}
\label{xxcor0.7}
Assume that the base field $k$ is algebraically closed of
characteristic zero.   Let $A$ be connected graded skew CY with $\hdet \mu_A = 1$. Then there is a $\sigma\in \Autz(A)$ such that
$o(\mu_{A^{\tilde{\sigma}}})<\infty$.
\end{corollary}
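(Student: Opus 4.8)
The plan is to choose the graded automorphism $\sigma$ so that the right-hand side of the identity \eqref{HI2} becomes an automorphism of finite order. Recall from Theorem~\ref{xxthm0.3} that
\[
\mu_{A^{\tilde{\sigma}}}=\mu_A\circ \sigma^{\bfl}\circ \xi_{\hdet(\sigma)}^{-1},
\]
where $\bfl$ is the (positive) AS index of $A$. The two features of this formula I would exploit are that $\xi_c$ is central in $\Autz(A)$, since it acts as the scalar $c^n$ on each graded piece $A_n$, and that $\hdet\colon \Autz(A)\to k^{\times}$ is a homomorphism of algebraic groups. Since $A$ is noetherian connected graded skew CY, it is AS regular by Lemma~\ref{xxlem1.2}, hence finitely generated, so that $G:=\Autz(A)$ is a linear algebraic group over $k$. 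Under this identification the strategy is to produce $\sigma$ for which $\sigma^{\bfl}$ cancels $\mu_A$ up to an automorphism of finite order, forcing $\mu_A\circ\sigma^{\bfl}$ to have finite order, and then to use the hypothesis $\hdet\mu_A=1$ to guarantee that the remaining factor $\xi_{\hdet(\sigma)}^{-1}$ also has finite order.

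To construct $\sigma$, I would first pass to $H:=\overline{\langle \mu_A\rangle}$, the Zariski closure in $G$ of the cyclic group generated by $\mu_A$; this is a commutative linear algebraic group containing the semisimple and unipotent Jordan parts of $\mu_A$. Because $\ch k=0$ and $H$ is commutative, it decomposes as a direct product $H=T\times F\times H_u$, where $T$ is a torus, $F$ is a finite abelian group, and $H_u$ is a unipotent group; write $\mu_A=(t,f,u)$ in these coordinates. This is where the hypotheses "algebraically closed" and "characteristic zero" enter: the torus $T\cong (k^{\times})^r$ is divisible, so $t^{-1}$ admits a $\bfl$-th root $t'\in T$, and the unipotent group $H_u$ is uniquely divisible via the logarithm, so $u^{-1}$ admits a (unique) $\bfl$-th root $u'\in H_u$. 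Setting $\sigma:=t'u'\in H\subseteq G$, all factors commute and $\sigma^{\bfl}=t^{-1}u^{-1}$.

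It then remains to verify the two finiteness claims. Computing inside the abelian group $H$ gives $\mu_A\circ\sigma^{\bfl}=(t,f,u)(t^{-1},1,u^{-1})=f$, which has finite order since $F$ is finite. Next, because $\hdet$ is a homomorphism of algebraic groups it kills unipotents and sends finite-order elements to roots of unity, so $\hdet(\sigma)=\hdet(t')$ and $\hdet(\sigma)^{\bfl}=\hdet(t^{-1})=\hdet(t)^{-1}$; the assumption $\hdet\mu_A=1$ together with $\hdet(u)=1$ forces $\hdet(t)=\hdet(f)^{-1}$, a root of unity because $f$ has finite order. Hence $\hdet(\sigma)$ is a root of unity and $\xi_{\hdet(\sigma)}$ has finite order. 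Substituting into \eqref{HI2} yields $\mu_{A^{\tilde{\sigma}}}=f\circ\xi_{\hdet(\sigma)}^{-1}$, a product of two commuting automorphisms of finite order (recall $\xi$ is central), and therefore $o(\mu_{A^{\tilde{\sigma}}})<\infty$.

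I expect the only real obstacle to be the structure theory invoked in the second paragraph: confirming that $\Autz(A)$ is genuinely a linear algebraic group, that $\hdet$ is an algebraic character, and that both the multiplicative Jordan decomposition and the product decomposition $H=T\times F\times H_u$ are available. These are standard for finitely generated connected graded algebras over an algebraically closed field of characteristic zero, and it is exactly these hypotheses that supply the divisibility needed to extract the $\bfl$-th roots $t'$ and $u'$; without them one cannot in general solve $\sigma^{\bfl}=\mu_A^{-1}$ even up to a factor of finite order, which is the crux of the argument.
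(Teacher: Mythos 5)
Your proposal follows the same overall strategy as the paper, which deduces this corollary from Theorem~\ref{xxthm7.8}: invoke \eqref{HI2}, realize $\Autz(A)$ as an affine algebraic group (Lemma~\ref{xxlem7.6}), and use characteristic-zero divisibility of abelian affine algebraic groups to produce a $\sigma$ with $\sigma^{\bfl}$ equal to $\mu_A^{-1}$ up to a factor of finite order. The difference lies in how that divisibility statement is obtained. The paper works inside the center $Z$ of $\Autz(A)$ (using Theorem~\ref{xxthm3.10} to know $\mu_A \in Z$) and proves Lemma~\ref{xxlem7.7} directly: part (a) shows a connected abelian affine group is divisible by a short self-contained argument (the quotient by the image of the $d$-th power map is connected of exponent $d$, hence consists of semisimple elements, hence embeds in a torus, hence is trivial), and part (b) handles the component group. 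You instead work in $H=\overline{\langle \mu_A\rangle}$ --- which is automatically commutative, so you avoid Theorem~\ref{xxthm3.10} entirely --- and quote the structure theorem $H \cong T \times F \times H_u$ together with divisibility of $T$ and of $H_u$ in characteristic zero. Both routes are valid; yours imports more standard structure theory where the paper's is self-contained, and your choice of $z=f$, $y=t'u'$ is exactly an instance of the paper's Lemma~\ref{xxlem7.7}(b).

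There is one step you should not wave through as ``standard'': the claim that $\hdet\colon \Autz(A)\to k^{\times}$ is a homomorphism of \emph{algebraic} groups, which you use to conclude $\hdet(u)=\hdet(u')=1$ (algebraic characters kill unipotents). The paper never proves that $\hdet$ is a regular function on $\Autz(A)$, and this is not obvious: $\hdet$ is defined through the induced action on $R^d\Gamma_{\fm_A}(A)^*$, and showing that this scalar varies algebraically with the automorphism requires a genuine argument. Mere multiplicativity --- which is all that is immediate from the definition, since $\hdet$ is the restriction to group-like elements of an algebra map $k\Autz(A)\to k$ --- does \emph{not} suffice here: in characteristic zero a unipotent group admits many nontrivial abstract homomorphisms to $k^{\times}$, so an abstract character need not kill $u$. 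Fortunately your argument survives with multiplicativity alone, which is exactly how the paper handles the corresponding step. Applying $\hdet$ to the identity $\sigma^{\bfl}=t^{-1}u^{-1}$ and using $\hdet(t)\hdet(f)\hdet(u)=\hdet(\mu_A)=1$ gives
\begin{equation*}
\hdet(\sigma)^{\bfl}=\hdet(t)^{-1}\hdet(u)^{-1}=\hdet(f)\,\hdet(\mu_A)^{-1}=\hdet(f),
\end{equation*}
a root of unity since $f$ has finite order; hence $\hdet(\sigma)$ is a root of unity (as $\bfl\neq 0$), $\xi_{\hdet(\sigma)}$ has finite order, and your conclusion $\mu_{A^{\tilde{\sigma}}}=f\circ \xi_{\hdet(\sigma)}^{-1}$ of finite order stands. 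With this repair, the proof is correct.
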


The paper is organized as follows. Section~\ref{xxsec1} discusses various
sources of examples of skew CY algebras. Section~\ref{xxsec2} contains some
general results on Hopf algebra actions on algebras and bimodules, as well as
smash products. Section~\ref{xxsec3} includes a study of the local cohomology
of a smash product involving a finitely graded algebra, as well as a discussion of
the homological determinant for Hopf actions on certain generalized AS Gorenstein
algebras. Sections~\ref{xxsec4}--\ref{xxsec6} respectively contain the proofs
of the homological identities (HI1)--(HI3). Section~\ref{xxsec7} is devoted to
applications of these homological identities and includes proofs of
Corollaries~\ref{xxcor0.5}--\ref{xxcor0.7}. In addition, some questions and
conjectures are presented in Sections~\ref{xxsec4}, \ref{xxsec6}, and \ref{xxsec7}.

We plan to study more results about the order of the Nakayama automorphism in a second paper \cite{RRZ}.  
In particular, there is an example of a skew CY algebra $A$
such that $A^{\tilde{\sigma}}$ is not CY for any graded algebra automorphism $\sigma$ \cite{RRZ}.  Thus 
$$\min \{o(\mu_{A^{\tilde{\sigma}}}) \mid  \sigma\in \Autz(A)\}$$
is a non-trivial numerical invariant of $A$.  We hope this invariant might be helpful in the project of
classifying Artin-Schelter (AS) regular algebras of global dimension four. 
For example, if $A$ is AS regular of dimension 4 and generated by 2 elements in degree 1, then, up
to a graded twist, we can show that $\mu_A$ is of the form $\xi_c$ where $c^7=1$ 
\cite{RRZ}. 
\begin{comment}
A list of possible Nakayama automorphisms (up to graded
twists) of connected graded skew CY algebras of low global
dimension is given in \cite{RRZ}. As a future project, one might try
to use this information in the classification of Artin-Schelter
regular algebras. The study of skew CY algebras is also
connected to representation theory of Frobenius algebras and theory
of so-called skew Calabi-Yau categories, and hopefully will produce
more invariants of noncommutative algebras.
\end{comment}

To close, we note that it would be useful to develop a more general theory 
of skew CY algebras parallel to that of CY algebras.  For example, 
much work on CY algebras has focused on showing how these arise in many important cases 
from potentials.  While in this paper we do not pursue this point of view, we 
note that the theory of potentials has been generalized to the skew CY setting; for example, 
``twisted superpotentials'' are discussed in \cite[Section 2.2]{BSW}.  It would be interesting 
to work out some basic properties of such generalized potentials and relate them to our results 
in this paper.  We also mention that skew Calabi-Yau categories have been considered by
Keller, who suggests that ``conservative  functors'' may serve as a good replacement for the identity functor in the definition 
of Calabi-Yau categories.

\subsection*{Acknowledgments}
The authors thank Raf Bocklandt, Bernhard Keller, Travis Schedler, Paul Smith, 
Michael Wemyss, and Quanshui Wu for useful conversations.  We thank Bernhard Keller
for sharing his ideas on defining skew Calabi-Yau categories.  We are also grateful to
Chelsea Walton and Amnon Yekutieli for helpful comments on a draft of this paper.
Reyes was partially supported by a University of California President's
Postdoctoral Fellowship during this project.  Rogalski was partially supported by the US National Science Foundation 
grants DMS-0900981 and DMS-1201572.  Zhang was partially supported by the US National Science Foundation 
grant DMS-0855743.

\section{Examples of skew CY algebras}
\label{xxsec1}

In this section, we set the scene by pointing out a number of known examples of skew CY algebras.  
We also review some definitions and terminology which we will need in the sequel.
The skew CY property for noncommutative algebras has been studied under other names 
for many years, even before the definitions of CY algebras and categories.  
If $A$ is Frobenius (finite dimensional, but not necessarily homologically smooth), then the automorphism $\mu_A$
as defined in \eqref{E0.1.1} is the Nakayama automorphism of $A$ in
the classical sense.   Similar ideas were considered in \cite{VdB1,
YeZ1} for graded or filtered algebras when rigid dualizing complexes were studied in the late 1990s.
The skew CY property was called ``rigid Gorenstein'' in \cite[Definition
4.4]{BZ}, was called ``twisted Calabi-Yau'' in \cite{BSW} (the word
``twisted'' was also used in the work \cite{DV}), and mentioned in
talks by several other people with possibly other names during the
last few years.   As mentioned in the introduction, we prefer the term 
``skew  Calabi-Yau'', since we will use the word  ``twisted" already in our 
study of graded twists of algebras.
%since we will also define ``twisted Calabi-Yau'' in \cite{RRZ}
%(which is a subclass of skew Calabi-Yau algebras). Otherwise
%``twisted Calabi-Yau'' is a completely fine notion for algebras in
%Definition \ref{xxdef0.1}(a).

%When $\mu_A$ is inner (or the identity), we say $\mu_A$ is trivial. Skew
%CY algebras form a natural extension of CY algebras by allowing
%non-trivial $\mu_A$. In this section two large classes of well-known
%skew CY algebras will be re-presented.
\begin{comment}
If $A$ is ${\mathbb Z}^w$-graded (e.g., connected ${\mathbb N}$-graded),
Definition \ref{xxdef0.1} should be made in the category of
${\mathbb Z}^w$-graded modules and \eqref{E0.1.1} should be replaced by
\begin{equation}
\label{E1.0.1}\tag{E1.0.1}
\Ext^i_{A^e}(A, A^e)\cong \begin{cases} 0 & i\neq d,\\
{^1A^\mu}(\bfl) & i=d, \end{cases}
\end{equation}
where ${\bfl}\in {\mathbb Z}^w$ and ${^1A^\mu}(\bfl)$ is the shift
of ${^1A^\mu}$ by degree $\bfl$. In this case, the Nakayama automorphism
$\mu$ is a graded algebra automorphism of $A$.
\end{comment}

The first major examples of skew CY algebras are the Artin-Schelter
regular algebras. The following definition was introduced by Artin
and Schelter in \cite{ASc}.
\begin{definition}
\label{xxdef1.1}
A connected graded algebra $A$ is called {\it Artin-Schelter Gorenstein}
(or {\it AS Gorenstein}, for short) if the following conditions hold:
\begin{enumerate}
\item
$A$ has finite injective dimension $d<\infty$ on both sides,
\item
$\Ext^i_A(k, A) = \Ext^i_{A^{op}} (k, A) = 0$ for all $i \neq d$ where
$k = A/A_{\geq 1}$, and
\item
$\Ext^d_A(k, A)\cong k(\bfl)$ and $\Ext^d_{A^{op}} (k, A) \cong k(\bfl)$ for
some integer $\bfl$.
\end{enumerate}
The integer $\bfl$ is called the {\it AS index}. If moreover
\begin{enumerate}
\item[(d)]
$A$ has (graded) finite global dimension $d$, 
\end{enumerate}
then A is called
{\it Artin-Schelter regular} (or AS regular, for short).
\end{definition}

In the original definition of Artin and Schelter, $A$ is required to
have finite Gelfand-Kirillov dimension, but this condition is sometimes omitted.  
It is useful to not require it here, since CY algebras of infinite GK-dimension 
are also of interest to those working on the subject.
It is known that if $A$ is AS regular, then the AS index of $A$ is positive \cite[Proposition 3.1]{SteZh}.
If $A$ is only AS Gorenstein, the AS index of $A$ could be zero or negative.

It is important to point out that the skew CY and AS regular properties coincide for connected graded
algebras. The following lemma is well-known (at least in the case when $A$ is noetherian).
\begin{lemma}
\label{xxlem1.2} Let $A$ be a connected graded algebra. Then $A$ is
skew CY (in the graded sense) if and only if it is AS regular.
\end{lemma}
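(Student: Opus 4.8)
The plan is to prove both directions of the equivalence between the graded skew CY property and AS regularity by unwinding the two definitions and matching up their homological content. Throughout I would work in the category of $\mathbb{Z}$-graded (or $\mathbb{N}$-graded, since $A$ is connected) modules, using the graded version \eqref{E1.0.1} of the skew CY condition. The central objects are the two trivial modules $k = A/A_{\geq 1}$ on each side, and the bimodule $A$ itself viewed as an $A^e$-module; the bridge between the bimodule Ext groups of \eqref{E1.0.1} and the one-sided Ext groups of Definition~\ref{xxdef1.1} will be a change-of-rings / base-change argument applying $-\otimes_A^{\mathbf{L}} k$ to a finite projective $A^e$-resolution of $A$.

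First I would establish the implication from skew CY to AS regular. Homological smoothness gives a finite-length resolution $P_\bullet \to A$ by finitely generated projective $A^e$-modules. Since each $P_i$ is a direct summand of a finite free $A^e$-module, and $A^e = A \otimes A^{\mathrm{op}}$ acts on $k = k \otimes k$ so that $A^e \otimes_A k \cong A$ as a right $A$-module (and symmetrically on the other side), tensoring $P_\bullet$ down along $A \to k$ produces a finite-length resolution of $k$ by finitely generated graded projective (hence, over the connected graded ring, free) one-sided $A$-modules. This yields finite graded global dimension $d$, giving condition (d) and the injective-dimension part of (a). To get conditions (b) and (c), I would compute $\Ext^i_A(k,A)$ by applying $\Hom_{A^e}(-, A^e)$ to $P_\bullet$ and then tensoring with $k$: the key point is the identity $\Ext^i_A(k, A) \cong \Ext^i_{A^e}(A, A^e) \otimes_A k$ (as graded right $A$-modules, up to a shift), which comes from associativity of derived tensor/Hom and the flatness of $A^e$ over $A$ on the relevant side. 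Substituting the skew CY computation $\Ext^i_{A^e}(A,A^e) = 0$ for $i \neq d$ and $\cong {}^1A^\mu(\bfl)$ for $i=d$, and tensoring ${}^1A^\mu(\bfl) \otimes_A k$ to get $k(\bfl)$ (the automorphism $\mu$ acts trivially on $k$ since it is graded and connected), yields exactly (b) and (c). The opposite-side statements follow by the symmetric argument, or by noting $A^{\mathrm{op}}$ is itself skew CY.

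For the converse, assuming $A$ is AS regular, the finite graded global dimension $d$ and the connectedness give a minimal finite free resolution $F_\bullet \to k$ of the trivial module. From this one builds a finite free resolution of the bimodule $A$ over $A^e$: the standard approach is to take the minimal free resolution of $k$ and use it, together with the Koszul-type or bar-type construction, to resolve $A$ over $A^e$, with each term finitely generated because the Betti numbers of $k$ are finite (this uses that $A$ is generated in finite degrees and noetherian). This establishes homological smoothness. It then remains to identify $\Ext^i_{A^e}(A, A^e)$. I would argue that these groups vanish for $i \neq d$ and are one-dimensional over $k$ in the appropriate sense for $i = d$ by reversing the base-change computation above: the AS conditions (b), (c) pin down $\Ext^i_A(k,A)$ and, via the faithful flatness / Nakayama-lemma argument over the connected graded ring, force $\Ext^i_{A^e}(A,A^e)$ to be concentrated in degree $d$ and to be an invertible $A$-bimodule, hence of the form ${}^1A^\mu(\bfl)$ for a unique graded automorphism $\mu$.

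I expect the main obstacle to be the precise base-change identity $\Ext^i_{A^e}(A, A^e) \otimes_A k \cong \Ext^i_A(k,A)$ and, in the converse direction, deducing the bimodule structure of the top Ext group from its one-sided shadow. Passing a computation from one side to the other via $-\otimes_A k$ loses information, so recovering that $\Ext^d_{A^e}(A,A^e)$ is an \emph{invertible} bimodule (rather than merely having the right Hilbert series) requires care — this is where one genuinely uses that $A$ is AS regular on \emph{both} sides, together with a graded Nakayama-lemma argument to see that the bimodule is cyclic and free of rank one as a left and right module. I would handle this by first proving the cleaner numerical/vanishing statements, then upgrading to the bimodule isomorphism using the double-Ext or local-duality machinery available for connected graded algebras of finite injective dimension.
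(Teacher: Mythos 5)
Your argument for the direction ``skew CY $\Rightarrow$ AS regular'' is essentially the paper's. The paper obtains finite global dimension from homological smoothness by citing \cite[Lemma 4.3(b)]{YeZ2} (your tensor-down argument with the bounded resolution of flat/projective one-sided modules is precisely the standard proof of that fact), and it establishes your base-change identity in its derived form: by Van den Bergh's theorem \cite[Theorem 1]{VdB2}, $\RHom_{A^e}(A,N)\cong U[-d]\otimes^L_{A^e}N$ where $U=\Ext^d_{A^e}(A,A^e)\cong{^1A^\mu}(\bfl)$, which the paper applies to the auxiliary bimodule $N=A\otimes k$ after the adjunction step $\RHom_A(k,A)\cong\RHom_{A^e}(A,A\otimes k)$. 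Because the Ext is concentrated in degree $d$ and ${^1A^\mu}(\bfl)$ is free as a one-sided module, your underived identity $\Ext^i_A(k,A)\cong\Ext^i_{A^e}(A,A^e)\otimes_A k$ gives the same computation; so this half is sound and takes the same route as the paper.

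The genuine gap is in the converse, specifically in your proof of homological smoothness (a direction the paper does not prove at all; it simply cites \cite[Propositions 4.4(c) and 4.5(a,b)]{YeZ2}). Your proposed construction --- take the minimal free resolution of $k$ and use it ``together with the Koszul-type or bar-type construction'' to resolve $A$ over $A^e$ with finitely generated terms --- does not work as stated: the bar resolution of $A$ over $A^e$ has terms $A\otimes A^{\otimes n}\otimes A$, which are never finitely generated over $A^e$ unless $A$ is finite dimensional, and there is no direct splicing of a one-sided resolution of $k$ into a bimodule resolution of $A$. What is actually needed is that $A^e$ is again connected graded, so $A$ admits a \emph{minimal} graded free resolution over $A^e$ whose $i$-th term is $A^e\otimes W_i$ with $W_i\cong\Tor_i^{A^e}(A,k\otimes k)$; one then proves $\Tor_i^{A^e}(A,k\otimes k)\cong\Tor_i^A(k,k)$ (this is where the bar complex legitimately enters, as a computational device), so that finiteness of the one-sided Betti numbers and their vanishing beyond degree $d$ yield a finite resolution by finitely generated free bimodules. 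Your appeal to noetherianness is also problematic twice over: the lemma carries no noetherian hypothesis, and even when $A$ is noetherian, $A^e=A\otimes A^{op}$ need not be, so ``syzygies over a noetherian ring are finitely generated'' cannot be invoked over $A^e$; in the non-noetherian case the finiteness of $\dim_k\Tor_i^A(k,k)$ itself requires a duality argument from the Gorenstein condition, which is the real substance of the cited Yekutieli--Zhang results. Your final step --- recovering that $\Ext^d_{A^e}(A,A^e)$ is an invertible bimodule from its two one-sided reductions via minimal complexes and graded Nakayama --- is the right idea and can be completed, but as written it rests on the smoothness you have not yet secured.
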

\begin{proof} If $A$ is AS regular, then $A$ is homologically smooth
by \cite[Propositions 4.4(c) and 4.5(a)]{YeZ2} and satisfies
\eqref{E1.0.1} by \cite[Proposition 4.5(b)]{YeZ2}. Thus $A$ is skew CY.

Conversely, we assume that $A$ is skew CY. By \cite[Lemma 4.3(b)]{YeZ2},
the (left and right) global dimension of $A$ is equal to the projective
dimension of $_{A^e}A$, which is finite by the homological smoothness
of $A$.   We now work in the bounded derived category of left $A^e$-modules.
Let $U=\Ext^d_{A^e}(A,A^e)={^1 A^{\mu}}(\bfl)$ as in \eqref{E1.0.1}.
Since $A$ is homologically smooth, by \cite[Theorem 1]{VdB2},
\begin{equation}
\label{E1.2.1}\tag{E1.2.1}
\RHom_{A^e}(A, N)\cong U[-d]\otimes^L_{A^e} N
\end{equation}
for any left $A^e$-module $N$, where $[n]$ means complex shift (or suspension) by degree $n$.
Let $\sigma$ be any graded automorphism of $A$ 
and let $P^\bullet$ be a graded projective resolution of ${^1 A^\sigma}$ as a
left $A^e$-module.   Let $k = A/A_{\geq 1}$; since $A$ and $k$ are naturally $(A, A)$-bimodules, 
$A \otimes k$ is an $(A^e, A^e)$-bimodule, with outer left $A^e$-action and inner right $A^e$-action.
%and let $A \otimes k$ be the right $A^e$-module given by the inner action
%$(c \otimes x) \cdot (a \otimes b) = (ca\otimes bx)$, for $c \otimes x \in A \otimes k$ and $(a \otimes b) \in A \otimes A^{op}$.
Then as elements of the derived category, we have  
$$(A\otimes k)\otimes^L_{A^e} {^1 A^\sigma}
=(A\otimes k)\otimes_{A^e} P^{\bullet}\cong
A\otimes_A P^{\bullet}\otimes_A k \cong
P^{\bullet}\otimes_A k\cong {^1 A^\sigma}\otimes_A k\cong k.$$
Similarly, in the derived category we have 
$${^1 A^\sigma}\otimes^L_{A^e}(A\otimes k)\cong k.$$
Now, using the above equations, hom-tensor adjunction and
\eqref{E1.2.1}, we have 
$$\begin{aligned}
\RHom_A(k,A)&\cong \RHom_A((A\otimes k)\otimes^L_{A^e} A, A)
\cong \RHom_{A^e}(A, \RHom_{A}((A\otimes k),A))\\
&\cong \RHom_{A^e}(A, A\otimes k)\cong U[-d]\otimes^L_{A^e}(A\otimes k)\\
&\cong ({^1 A^\mu}(\bfl)[-d])\; \otimes^L_{A^e} (A\otimes k)\cong k(\bfl)[-d]
\end{aligned}
$$
which means that $\Ext^i_A(k,A)=\begin{cases} 0& {\text{if  }} i\neq d\\
k(\bfl)&{\text{if  }} i=d \end{cases}$. By symmetry, a similar statement
is true for $\Ext^i_{A^{op}}(k,A)$. Therefore $A$ is AS regular.
\end{proof}
\noindent

By the previous result, the theory of skew CY algebras encompasses all of the theory of AS regular algebras (even those 
of infinite GK-dimension).
In fact, it is much more general still, as it applies to many other types of algebras, including ungraded ones.  For example, one may consider Hopf algebras which are skew CY, and we discuss this case next.
We first review a few facts about Hopf algebras; some more review will be found in the next section.
We recommend \cite{Mo} as a basic reference for the theory of Hopf algebras.

Throughout this paper $H$ will stand for a Hopf algebra $(H, m, u, \Delta, \epsilon)$ over $k$, with
bijective antipode $S$.   This assumption on $S$ is automatic if $H$ is
finite dimensional over $k$, or if $H$ is a group algebra $kG$,
and these are two important cases of interest.   We use Sweedler notation, so for example we write $\Delta(h) =
\sum h_1 \otimes h_2,$ and further we often even omit the $\sum$ from expressions.  
For an algebra homomorphism $f: H\to k$, the right winding automorphism of $H$
associated to $f$ is defined to be
$$\Xi^r_f: h\to \sum h_1 f( h_2)$$
for all $h\in H$. The left winding automorphism $\Xi^l_f$ of $H$
associated to $f$ is defined similarly, and it is well-known that both
$\Xi^l_f$ and $\Xi^r_f$ are algebra automorphisms of $H$.
Recall that the $k$-linear dual $H^* = \Hom_k(H,k)$ is an algebra with 
product given by the convolution $f \star g$, where $f \star g(h) =
\sum f(h_1) g(h_2)$.
Let $f: H \to k$ be an algebra map.  Then the functions $f: H \to k$ and $f \circ S: H \to k$
are easily proved to be inverses in the algebra $H^*$, and using this, one sees that  
$\Xi^r_f$ and $\Xi^r_{f \circ S}$ are inverse automorphisms of $H$.
This also implies that $f \circ S$ and $f \circ S^2$
are also inverses in $H^*$, and thus
\begin{equation}
\label{E1.2.2}\tag{E1.2.2}
f\circ S^2=f.
\end{equation}
As another consequence, it is easy to check that any winding automorphism commutes with $S^2$.  

A standard tool in the theory of the finite-dimensional Hopf algebras $H$ is the notion of integrals \cite[Chapter 2]{Mo}.
The theory of integrals has been extended to infinite-dimensional AS-Gorenstein Hopf algebras $H$ in \cite{LuWZ2}.  The left homological integral of such an $H$ is defined to be the $1$-dimensional $H$-bimodule $\int^l = \Ext^d_H({}_Hk, {}_H H)$, where $d = \operatorname{injdim}(H)$.  Picking $0 \neq \fu \in \int^l$, then $\fu$ is an invariant for the left $H$-action, that is, $h \fu  = \epsilon(h) \fu$ for all $h \in H$, but the right action gives 
an algebra map $\eta: H \to k$ defined by $\fu \cdot h = \eta(h) \fu$ which may be nontrivial.  By an abuse of 
notation we write $\int^l = \eta$, so that one has the corresponding winding automorphisms $\Xi^r_{\int^l}$ and $\Xi^l_{\int^l}$.  One may define the right integral $\int^r = \Ext^d_H(k_H, H_H)$ analogously; the left action on it gives 
an algebra map $\int^r: H \to k$ which is known to satisfy $\int^r = \int^l \circ S$.   Thus  $\Xi^r_{\int^r}$ and $\Xi^r_{\int^l}$ are 
inverses in the group of algebra automorphisms of $H$, and similarly for the left winding automorphisms.

\begin{lemma}
\label{xxlem1.3} Let $H$ be a noetherian Hopf algebra.  Then $H$ is AS regular in the sense 
of \cite[Definition 1.2]{BZ} if and only if $H$ is skew CY.   A Nakayama automorphism of
such a Hopf algebra $H$ is given by $S^{-2}\circ \Xi^r_{\int^l}$.  (Alternatively, $S^2\circ \Xi^l_{\int^l}$
is also a Nakayama automorphism of $H$.)
\end{lemma}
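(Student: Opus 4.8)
The plan is to reduce both the stated equivalence and the formula for the Nakayama automorphism to a single computation of the bimodule $\Ext^\bullet_{H^e}(H,H^e)$, carried out through a standard twisting functor. I would define the exact functor $F\colon H\text{-}\Mod\to H^e\text{-}\Mod$ by $F(N)=H\otimes N$ with bimodule structure $a\cdot(h\otimes n)\cdot b=\sum a_1\, h\, b\otimes a_2\, n$. A direct check using the counit shows $F(k)={}^1H^1$ is the regular bimodule $H$, and the bijective antipode furnishes a standard untwisting isomorphism $F(H)\cong H^e$ of $H^e$-modules, built from $S$. Hence $F$ is exact and carries finitely generated projective left $H$-modules to finitely generated projective $H^e$-modules; this one functor drives both directions.

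First I would settle the finiteness bookkeeping. If $H$ is AS regular in the sense of \cite[Definition 1.2]{BZ}, it has finite global dimension $d$, so (since $H$ is noetherian) the trivial module ${}_Hk$ admits a finite resolution $P_\bullet\to k$ by finitely generated projectives; applying $F$ yields a finite resolution $F(P_\bullet)\to H$ by finitely generated projective $H^e$-modules, which is exactly homological smoothness. Conversely, if $H$ is skew CY then $\projdim_{H^e}H<\infty$, and by \cite[Lemma 4.3(b)]{YeZ2} this equals the global dimension of $H$, which is therefore finite. So the finiteness conditions in the two definitions match, and it remains only to compare the rank-one condition \eqref{E0.1.1} with the one-sided AS condition $\Ext^i_H({}_Hk,{}_HH)=0$ for $i\neq d$ and $\cong k$ for $i=d$ (the right-handed version being equivalent via $S$).

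For this I would invoke the standard isomorphism $\Ext^i_{H^e}(H,M)\cong\Ext^i_H({}_Hk,M_{\mathrm{ad}})$, valid for any $H$-bimodule $M$, where $M_{\mathrm{ad}}$ carries the left adjoint action $h\rightharpoonup m=\sum h_1\, m\, S(h_2)$; this is the derived form of the adjunction $\Hom_{H^e}(F(-),M)\cong\Hom_H(-,M_{\mathrm{ad}})$, legitimate because $F$ sends projectives to projectives and $F(k)=H$. Taking $M=H^e$, the object $(H^e)_{\mathrm{ad}}$ is free as a left $H$-module, while the remaining outer $H^e$-structure makes it an $(H,H^e)$-bimodule, so that $\Ext^i_H({}_Hk,(H^e)_{\mathrm{ad}})$ is naturally an $H$-bimodule. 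A direct analysis should show this vanishes for $i\neq d$ and, for $i=d$, reduces to $\Ext^d_H({}_Hk,{}_HH)=k$ with its residual right $H$-action given by the homological integral $\int^l$. This simultaneously proves the equivalence of \eqref{E0.1.1} with the one-sided AS condition and exhibits $\Ext^d_{H^e}(H,H^e)\cong{}^1H^\mu$.

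The main obstacle is precisely this bimodule bookkeeping: keeping the two $H^e$-actions straight through $F$, the antipodal untwisting isomorphism $F(H)\cong H^e$, and the hom-tensor manipulations, so that the residual automorphism comes out to be exactly $S^{-2}\circ\Xi^r_{\int^l}$ and not some variant. Here the factor $S^{-2}$ should record the two appearances of the antipode in the untwisting isomorphism, while $\Xi^r_{\int^l}$ records the right action computed by the integral. Finally, to obtain the alternative expression $S^2\circ\Xi^l_{\int^l}$, I would run the symmetric computation over $H^{op}$ (using $\int^r=\int^l\circ S$); that it yields the same automorphism is a form of Radford's $S^4$-formula, and I expect to reconcile the two forms using \eqref{E1.2.2} together with the fact that winding automorphisms commute with $S^2$.
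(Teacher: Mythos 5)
Your proposal takes a genuinely different route from the paper. The paper's proof of Lemma~\ref{xxlem1.3} is essentially a citation: \cite[Lemma 5.2(c)]{BZ} gives homological smoothness of a noetherian AS regular Hopf algebra, \cite[Theorems 0.2 and 0.3]{BZ} together with the discussion in \cite[Section 4]{BZ} give the skew CY property and \emph{both} formulas for the Nakayama automorphism, and \cite[Theorem 2.3]{LuWZ1} gives the converse. You instead propose to re-derive this material from scratch via the Shapiro-type isomorphism $\Ext^i_{H^e}(H,M)\cong\Ext^i_H(k,M_{\mathrm{ad}})$, which is indeed the machinery underlying the cited results. Your finiteness bookkeeping is correct on both sides: noetherianness plus finite global dimension yields a finite resolution of $k$ by finitely generated projectives, and applying your functor $F$ (which does satisfy $F(k)\cong H$ and $F(H)\cong H^e$) gives homological smoothness; conversely \cite[Lemma 4.3(b)]{YeZ2} bounds the global dimension by $\projdim_{H^e}H$.

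However, the proposal has genuine gaps, and they sit exactly where the content of the lemma lies. First, the decisive computation --- that the residual right $H^e$-structure on $\Ext^d_H(k,(H^e)_{\mathrm{ad}})$ comes out to be ${}^1H^{\mu}$ with $\mu=S^{-2}\circ\Xi^r_{\int^l}$ --- is never carried out; you defer it as ``the main obstacle.'' Since this formula \emph{is} the assertion of the lemma (and is what the paper outsources to a substantial portion of \cite{BZ}), deferring it means the lemma is not proved. The delicacy is real: your functor $F(N)=H\otimes N$ with $a\cdot(h\otimes n)\cdot b=\sum a_1hb\otimes a_2n$ is \emph{not} left adjoint to the adjoint action $h\rightharpoonup m=\sum h_1mS(h_2)$ that you pair it with; a direct check (using $\sum h_2S^{-1}(h_1)=\epsilon(h)$) shows it is adjoint to the variant action $h\rightharpoonup m=\sum h_2\,m\,S^{-1}(h_1)$. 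Exactly this kind of mismatch is how factors of $S^{\pm2}$ are gained or lost, so the claim that the answer ``comes out to be exactly $S^{-2}\circ\Xi^r_{\int^l}$'' cannot be taken on faith. Second, in the converse direction your argument does give $\Ext^i_H(k,H)=0$ for $i\neq d$, but it does not give $\dim_k\Ext^d_H(k,H)=1$: since $(H^e)_{\mathrm{ad}}$ is free of infinite rank when $\dim_kH=\infty$, vector-space counting only yields $\Ext^d_H(k,H)\otimes W\cong H$ with $\dim_kW=\dim_kH$, which is consistent with $\Ext^d_H(k,H)$ having any finite dimension. One must track the surviving one-sided module structure through the untwisting and use that the noetherian ring $H$ has invariant basis number (a free module of rank $n$ isomorphic to a free module of rank one forces $n=1$), or argue as in the second half of the paper's Lemma~\ref{xxlem1.2} via Van den Bergh's duality; the paper avoids this entirely by citing \cite[Theorem 2.3]{LuWZ1}.
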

\begin{proof} By \cite[Lemma 5.2(c)]{BZ}, if $H$ is AS regular, then $H$ is homologically smooth.
Then $H$ is skew CY, and each of the given formulas is a Nakayama automorphism, by \cite[Theorems 0.2 and 0.3]{BZ} and the 
discussion in \cite[Section 4]{BZ}.  Conversely, by \cite[Theorem 2.3]{LuWZ1}, a noetherian skew CY Hopf algebra is AS regular. 
\end{proof}
\noindent The same argument as in the second half of the proof of Lemma \ref{xxlem1.2} shows that a
non-noetherian skew CY Hopf algebra is AS regular.  It is plausible
that the converse is true, but this is unknown.  
Brown has conjectured that every noetherian Hopf algebra is AS
Gorenstein \cite[Question E]{Br}. If Brown's conjecture holds, then
Lemma \ref{xxlem1.3} (together with \cite[Theorem 2.3]{LuWZ1})
implies that the class of noetherian Hopf algebras with finite
global dimension is equal to the class of noetherian skew CY Hopf
algebras.  In any case, Hopf algebras clearly give another large class of 
examples of skew CY algebras.

An interesting special case of both of the classes of examples introduced 
so far is the class of enveloping algebras of graded Lie algebras.  Note that there are many 
examples of graded Lie algebras, for example the Heisenberg algebras.
\begin{example}
\label{xxex1.4}
Let ${\mathfrak g}$ be a finite dimensional positively graded Lie
algebra. Then the universal enveloping algebra $U({\mathfrak g})$ is noetherian,
connected graded, AS regular, and CY, of global dimension
$d=\dim_k {\mathfrak g}$.
\end{example}
\begin{proof} 
This is quite well-known. For instance, the CY property of $U(\mathfrak{g})$
follows from~\cite[Theorem~A]{Ye}. We give a short proof based on
Lemma~\ref{xxlem1.3}.

Write $U=U({\mathfrak g})$. Since ${\mathfrak g}$ is
positively graded, the Hopf algebra $U$ is connected graded. Since
$\dim_k {\mathfrak g}=d<\infty$, the algebra $U$ is noetherian.  It is well-known that $U$ is in 
addition an AS regular, Auslander regular and Cohen-Macaulay domain
of global dimension $d$.   See, for example, \cite[Section 2]{FV}.  
Now by Lemma \ref{xxlem1.3}, the algebra $U$ is skew CY.
Since $U$ is connected graded and the maximal graded ideal of
$U$ is $\fm:=\ker \epsilon$, we have $\Ext^d_U(k,U)\cong k=U/\fm$ as $U$-bimodules. This
implies that the left integral of $U$ is trivial and hence $\Xi^r_{\int^l}$ is the identity map. Since $U$ is
cocommutative, $S^2$ is the identity map.  By  Lemma \ref{xxlem1.3}, 
$\mu_U$ is the identity and hence $U$ is CY. 
\end{proof}

The Nakayama automorphism of a skew CY algebra is not in general easy to compute.
For a Frobenius algebra, it is the same as the classical Nakayama automorphism and is 
related to the corresponding bilinear form (see the end of Section \ref{xxsec3}).
We have seen that there is a formula for the Nakayama automorphism in the case 
of Hopf algebras (Lemma \ref{xxlem1.3}) which reduces the problem to calculating the homological integral.
If the algebra $A$ is a connected graded Koszul algebra, then the Nakayama automorphism of $A$ can be determined 
if one knows the Nakayama automorphism of the Koszul dual $A^!$, which is Frobenius, since the 
actions of the Nakayama automorphisms on the degree $1$ pieces of $A$ and $A^!$ are dual up to sign \cite[Theorem 9.2]{VdB1}.
This also works in the more general case of $N$-Koszul algebras \cite[Theorem 6.3]{BM}.

The aim of the rest of the paper is to give formulas which help to compute the 
Nakayama automorphism by showing how the Nakayama automorphism changes 
under some common constructions.  We give one simple result of this type right now.  Recall that an element $z\in A$ is
called normal if (i) there is a $\tau\in \Aut(A)$ (the group of
algebra automorphisms of $A$) such that $za=\tau(a)z$ for all $a\in
A$, and (ii) $\tau(z)=z$. It is clear that, if $z$ is a
nonzerodivisor, then (ii) follows from (i). Let $\sigma$ be in
$\Aut(A)$. A normal element $z$ is called $\sigma$-normal if it is a
$\sigma$-eigenvector, namely, $\sigma(z)=cz$ for some $c\in
k^{\times}: =k\setminus \{0\}$.

There is a strong connection between the Nakayama automorphism and the theory of dualizing complexes, which 
we need in the next proof and a few others, especially in Lemma~\ref{xxlem3.5} below.   These 
instances are few enough that we do not review the theory of dualizing complexes here; the reader can find the basic definitions and results in the papers we reference. 
%A (rigid) dualizing complex over a ring $A$ is a complex $R$ in the bounded derived category of 
%$A$-bimodules with certain good properties; see \cite{VdB}.   Namely, if $A$ is connected graded AS Gorenstein 
%of global dimension $d$, then the rigid dualizing complex for $A$ exists and is equal to $^{\mu} A ^1[d]$, 
%where where $\mu$ is the same as the automorphism appearing in \eqref{E0.1.1} (in particular this equation holds).  

\begin{lemma}
\label{xxlem1.6} Let $A$ be noetherian connected graded AS
Gorenstein algebra and let $z$ be a homogeneous $\mu_A$-normal nonzerodivisor of
positive degree. Let $\tau$ be the element in $\Autz(A)$ such that
$za=\tau(a) z$ for all $a\in A$. Then $\mu_{A/(z)}$ is equal to
$\mu_A\circ \tau$ restricted to $A/(z)$.
\end{lemma}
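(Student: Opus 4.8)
The plan is to pass to the factor ring through the theory of (balanced, rigid) dualizing complexes, which behaves well under quotients by normal nonzerodivisors. I would first record the elementary bookkeeping and then carry out the dualizing-complex computation. Since $z$ is normal, $(z) = zA = Az$ is a two-sided ideal and $B := A/(z)$ is an algebra; as $z$ is a nonzerodivisor the automorphism $\tau$ with $za = \tau(a)z$ satisfies $\tau(z) = z$. Setting $n = \deg z > 0$, right multiplication by $z$ yields a short exact sequence of graded $A$-bimodules
\[
0 \longrightarrow {}^1A^{\tau}(-n) \xrightarrow{\ \cdot z\ } A \longrightarrow B \longrightarrow 0, \qquad (\star)
\]
where the first map is $a \mapsto az$; the right twist by $\tau$ is exactly what is forced by $\tau(b)z = zb$ in order for $\cdot z$ to be a bimodule map. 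I also note a compatibility that will be used throughout: applying $\mu_A$ to $za = \tau(a)z$ and using $\mu_A(z) = cz$ (the hypothesis that $z$ is $\mu_A$-normal) gives $z\mu_A(a) = \mu_A\tau(a)\,z$, while directly $z\mu_A(a) = \tau\mu_A(a)\,z$; cancelling the nonzerodivisor $z$ shows $\mu_A \circ \tau = \tau \circ \mu_A$. In particular $\mu_A\circ\tau$ preserves $(z)$ and descends to an automorphism of $B$, which is the automorphism named in the statement.

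Next I would invoke the dualizing-complex description of the Nakayama automorphism. For a noetherian connected graded AS Gorenstein algebra of injective dimension $d$ and AS index $\bfl$, the balanced dualizing complex has the form $R_A \cong {}^{\mu_A}A^1(-\bfl)[d]$, in which $\mu_A$ records the twist between the two $A$-actions \cite{VdB1, YeZ1}. Because $A \to B$ is a finite algebra map, the change-of-rings formula gives $R_B \cong \RHom_A(B, R_A)$, which I would evaluate using $(\star)$. Since both nonzero terms of $(\star)$ are free of rank one as left $A$-modules, $\RHom_A(B, R_A)$ is computed by the two-term complex $\Hom_A(A, R_A) \to \Hom_A({}^1A^{\tau}(-n), R_A)$. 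Under the standard identifications this becomes a map $W \to W'$ given by a nonzero scalar multiple of left multiplication by $z$, where the right $A$-structure is inherited unchanged from $R_A$, while the right twist by $\tau$ in the source of $(\star)$ converts the left structure from a twist by $\mu_A$ into a twist by $\mu_A\circ\tau$; that is, $W = {}^{\mu_A}A^1(*)$ and $W' = {}^{\mu_A\tau}A^1(*)$. As $z$ is a nonzerodivisor this map is injective, so the only nonzero cohomology of $\RHom_A(B, R_A)$ is its cokernel ${}^{\mu_A\tau}A^1(*)/(z) \cong {}^{\,\overline{\mu_A\circ\tau}}B^1(*)$, placed in cohomological degree $1-d$. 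Hence $R_B \cong {}^{\,\overline{\mu_A\circ\tau}}B^1(\bfl-n)[d-1]$. Reading off this balanced dualizing complex shows simultaneously that $B$ is AS Gorenstein of injective dimension $d-1$ and AS index $\bfl-n$, and that its Nakayama automorphism is $\mu_B = (\mu_A \circ \tau)|_B$, as claimed.

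The main obstacle is the careful tracking of the left/right bimodule twists through the Hom computation and matching them against the chosen placement of $\mu_A$ inside $R_A$; here the identity $\mu_A\circ\tau = \tau\circ\mu_A$ established above is precisely what guarantees that the two-term complex is genuinely a complex of $A$-bimodules and that the ordering in the answer is immaterial. Beyond this bookkeeping, the two background inputs that must be cited and, if necessary, reconciled with the definition of $\mu_A$ for AS Gorenstein algebras are the form of the balanced dualizing complex of such an algebra and the change-of-rings isomorphism $R_B \cong \RHom_A(B, R_A)$ for the finite map $A \to B$; both are standard in the dualizing-complex literature for connected graded noetherian algebras.
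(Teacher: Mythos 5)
Your proposal is correct and follows essentially the same route as the paper's own proof: both identify the rigid/balanced dualizing complex of $A$ as a one-sided twist of $A$, invoke the change-of-rings isomorphism $R_{A/(z)} \cong \RHom_A(A/(z), R_A)$ for the finite quotient map $A \to A/(z)$, compute it via the free resolution $0 \to {}^1A^{\tau}(-n) \to A \to A/(z) \to 0$, and read off the Nakayama automorphism from the bimodule twist on the resulting one-term cohomology. The only slip is the sign of the internal shift in your final formula: with your own convention $R_A \cong {}^{\mu_A}A^1(-\bfl)[d]$, the conclusion should read $R_B \cong {}^{\overline{\mu_A\circ\tau}}B^1(-(\bfl-n))[d-1]$, so that the AS index is indeed $\bfl-n$; this does not affect the identification of $\mu_{A/(z)}$.
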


\begin{proof} Let $d=\injdim A$. Considering $A$ as an ungraded algebra,
${^1 A^{\nu}}[d]$ is the rigid dualizing complex over $A$, where
$\nu^{-1}$ is the Nakayama automorphism of $A$, by
\cite[Proposition 6.18(2)]{YeZ1} (see also Lemma \ref{xxlem3.5} below and
\cite[Proposition 4.5(b,c)]{YeZ2}).  
%Here $[n]$ means the complex shift (or suspension) by degree $n$. 
The rigid dualizing complex
over $A/(z)$ is equal to $\RHom_A(A/(z), {^1 A^{\nu}}[d])$ by
\cite[Theorem 3.2 and Proposition 3.9]{YeZ1}.  A computation shows that 
$$\Ext^1_A(A/(z),{^1 A^{\nu}})=\Ext^1_A(A/(z),A)^{\nu}=
\{^1( A/(z))^{\tau^{-1}}\}^{\nu}
=(A/(z))^{\tau^{-1}\circ\nu},$$
as left $A^e$-modules, and that $\Ext^i(A/(z), A) = 0$ for $i \neq 1$.
Thus 
$$\RHom_A(A/(z), {^1 A^{\nu}}[d])=\Ext^1_A(A/(z),{^1 A^{\nu}})[d-1]
=(A/(z))^{\tau^{-1}\circ\nu}[d-1].$$
Thus the Nakayama automorphism of $A/(z)$ is $\mu_A\circ \tau$
restricted to $A/(z)$.
\end{proof}

Recall from the introduction that when $A$ is a ${\mathbb Z}$-graded algebra, for any 
$0 \neq c \in k$ we have the graded automorphism $\xi_c: A \to A$ with $\xi_c(a) = c^{\mid\mid  a \mid\mid} a$ 
for all homogeneous elements $a$, where $\mid\mid \;\;\mid\mid$
denotes the degree of homogeneous elements.  A ${\mathbb Z}$-graded skew CY algebra with $\mu_A=\xi_c$ 
is called $c$-Nakayama.   In a sense, graded $c$-Nakayama algebras are closer to CY
algebras than more general skew CY algebras. 
%\begin{definition}
%\label{xxdef1.5}
%Let $A$ be a ${\mathbb Z}$-graded skew CY algebra. If $\mu_A=\xi_c$,
%$A$ is called $c$-Nakayama. For example, if $\mu_A=\xi_{-1}$,
%$A$ is called $(-1)$-Nakayama.
%\end{definition}
%By Theorem \ref{xxthm3.10}, $\mu_A$ is in the center of $\Autz(A)$,
%so commutes with $\tau$.
We now give an explicit example which was one of the motivating examples for our work in this 
paper calculating the Nakayama automorphism of a smash product.   It is a simple example 
of a ring $A$ which is skew CY only, but which becomes CY after passing to a skew group algebra.
\begin{example}
\label{xxex1.7}
Let $A$ be the $k$-algebra generated by $x$ and $y$ of degree 1 and subject
to the relations
$$x^2y=yx^2, \quad y^2x=xy^2.$$
This is the down-up algebra $A(0,1,0)$ \cite[p. 465]{KK}, 
which is AS regular algebra of type $S_1$, with AS index 4 \cite{ASc}. Let $z=xy-yx$,
then one checks that $xz=-zx$ and $yz=-zy$. Hence $z$ is normal and $z f=\xi_{-1}(f) z$
for all $f\in A$.

Let $B=A/(z)$. Then $\mu_B=Id_B$, as $B\cong k[x,y]$ is commutative.
Since $A$ is ${\mathbb Z}^2$-graded with $\deg(x)=(1,0)$ and
$\deg(y)=(0,1)$, $\mu_A$ must be ${\mathbb Z}^2$-graded.  Hence
$\mu_A$ sends $x$ to $a x$ and $y$ to $by$ for some $a,b\in k^\times$.
Consequently, $\mu_A(z)=\lambda z$ for some $\lambda\in k^\times$.
Applying Lemma \ref{xxlem1.6} (viewing $A$ as ${\mathbb N}$-graded),
$\mu_{B}=(\xi_{-1}\circ \mu_A)\mid_{B}$. Thus $\mu_A=\xi_{-1}$ when
restricted to $A/(z)$. Since both $A$ and $A/(z)$ are generated by $x$
and $y$, it follows that $\mu_A=\xi_{-1}$, or in other words $A$ is $(-1)$-Nakayama (hence not CY).

Consider the group $G = \{1,\xi_{-1}\}$ acting on $A$ naturally.   We claim that 
for $H =k G$, the smash product $A \# H$ (or 
equivalently, the skew group algebra $A \rtimes G$) is CY.   For those 
familiar with theory of superpotentials, one way to see this is 
to show that $A \rtimes G$ is isomorphic to a factor algebra of the path algebra $kQ$, where 
$Q$ is the McKay quiver corresponding to the action of $G$ on $A_1$; see \cite[Section 3]{BSW}.
In the case at hand, $Q$ is the quiver given in \cite[Section 5.3]{Boc} with vertices $\{1, 2 \}$, two arrows $a_1, a_2$ from $1$ to $2$, and two arrows 
$a_3, a_4$ from $2$ to $1$.  The algebra $A \rtimes G$ is the factor algebra of $kQ$ given 
by the cubic relations given by taking cyclic partial derivatives of the superpotential $W = a_1 a_3 a_2 a_4 + a_3 a_1 a_4 a_2$.  This algebra is shown to be CY of dimension~3 in~\cite[Theorem~5.4]{Boc}.

We have not given full details of the argument above, because one of the goals of our paper was to seek a deeper reason 
why a skew group algebra of a non CY algebra might become CY.  This is achieved by Corollary~\ref{xxcor0.6}(b), 
which shows that $A \rtimes G$ must be CY since $G$ is the cyclic subgroup generated by the Nakayama automorphism.

%%%zhang \[
%%%zhang \begin{tikzpicture}[anchor=base,auto]
%%%zhang \path (0,0) node (Q1) {$Q_1 = $} +(1,0) node (Q1a) {$\bullet$}
%%%zhang +(2,0) node (Q1b) {$\bullet$};
%   ++(4,0) node (Q2) {$Q_2 = $} +(1.25,0) node (Q2a) {$\bullet$} +(2.25,0)
% node (Q2b) {$\bullet$}
%   ++(4.5,0) node (Q3) {$Q_3 = $} +(1.5,-.5) node (Q3c) {$\bullet$}
%+(1,.5) node (Q3b) {$\bullet$} +(2,.5) node (Q3a) {$\bullet$};
%%%zhang \draw[->] (Q1a) edge [bend right=30] (Q1b);
%%%zhang \draw[->] (Q1a) edge [bend right=60] node [below]{$a_3,a_4$} (Q1b);
%%%zhang \draw[->] (Q1b) edge [bend right=30] (Q1a); \draw[->] (Q1b) edge
%%%zhang [bend right=60] node [above]{$a_1,a_2$}(Q1a);
%\draw[->] (Q2a) edge [bend right=30] (Q2b);
%\draw[->] (Q2b) edge [bend right=30] (Q2a);
%\draw[->] (Q2a) edge[loop left] (Q2a);
%\draw[->] (Q2b) edge[loop right] (Q2b);
%
%
%\draw[->] (Q3a) edge (Q3b);
%\draw[->] (Q3a) edge [bend right=30] (Q3b);
%\draw[->] (Q3a) edge [bend right=60] (Q3b);
%
%\draw[->] (Q3b) edge [bend right=30] (Q3c);
%\draw[->] (Q3b) edge (Q3c);
%\draw[->] (Q3b) edge [bend right=60] (Q3c);
%
%\draw[->] (Q3c) edge [bend right=30] (Q3a);
%\draw[->] (Q3c) edge (Q3a);
%\draw[->] (Q3c) edge [bend right=60] (Q3a);
%%%zhang \end{tikzpicture}%\vspace{-.25cm}
%%%zhang \]

The current example is also interesting in the context of our main theorem on the Nakayama 
automorphisms of graded twists.  Let $\sigma\in \Autz(A)$ be defined by $\sigma(x)=x$
and $\sigma(y)=iy$ where $i^2=-1$.  By \cite[Theorem 1.5]{KK}, $\hdet
\sigma=i^2=-1$. Since $\sigma^4=1$, by Theorem~\ref{xxthm0.3}
the graded twist $A^{\tilde{\sigma}}$ is CY. One may check that the algebra
$A^{\tilde{\sigma}}$ is generated by $x$ and $y$ subject to
the relations $x^2y=-yx^2$ and $y^2x=-xy^2$. One may also use Lemma
\ref{xxlem1.6} together with a direct computation to show that
$A^{\tilde{\sigma}}$ is CY.
\end{example}

\begin{comment}
The next example is easier.

\begin{example}
\label{xxex1.8}
Let $A$ be the skew polynomial ring $k_p[x,y]$ generated by $x$ and
$y$ subject to the relation $yx=pxy$ for some $p\in k^\times$. Then
$A$ is skew CY. It follows from Lemma \ref{xxlem1.6} that the $\mu_A$
sends $x$ to $p^{-1}x$ and $y$ to $py$. Also see Example \ref{xxex5.5}.

Unless $p$ is 1 or $-1$, $A$ is not $c$-Nakayama. On the other hand, $A$
(and any noetherian AS regular algebra of global dimension 2) is a
graded twist of the CY commutative polynomial ring $k[x,y]$.
\end{example}

We note that one may also quickly form new skew CY algebras from old using tensor
products.
\begin{example}
\label{xxex1.9}
If $A$ and $B$ are skew CY, then $A \otimes B$ is again skew CY. Indeed,
$A \otimes B$ is homologically smooth by~\cite[Lemma~3.3]{Lu},
and it satisfies~\eqref{E0.1.1} by K{\"u}nneth formula. Further, one
can choose $\mu_{A\otimes B}=\mu_A\otimes \mu_B$.
\end{example}
\end{comment}

If $A$ is a ${\mathbb Z}^w$-graded algebra, we will let $\Autw(A)$ denote the
group of all graded algebra automorphisms of $A$.  Starting from Section \ref{xxsec2}, 
we will consider non-connected graded skew CY algebras.  In this case the Nakayama automorphism may not be
unique, as we see in the following standard lemma whose proof we leave to the reader.

\begin{lemma}
\label{xxlem1.10}
Let $A$ be a ${\mathbb Z}^w$-graded algebra and $M$ be a graded $A$-bimodule.
Suppose that every homogeneous left-invertible (respectively,
right-invertible) element of $A$ is invertible. Let $\nu, \sigma$ denote
elements in $\Autw(A)$.
\begin{enumerate}
\item
$M$ is isomorphic to $^{1}A^{\nu}$, for some $\nu$, if and only if
$M_A\cong A_A$ and $_AM\cong {_AA}$.
\item
Suppose $M\cong {^1A^\nu}$. Then an element $\fe\in M$ is a generator
for $_AM$ if and only if it is a generator for $M_A$.
\item
Let $\fe$ be a generator of $^{1}A^{\nu}$. Replacing $\fe$ to $\fe f$
for some invertible element $f$ changes $\nu$ to $\nu \circ \eta_f$, where 
$\eta_f$ is the inner automorphism  $a \mapsto f a f^{-1}$ for all $a\in A$. 
\item
$^{1}A^{\nu}$ is isomorphic to $^{1}A^{\sigma}$ if and only if
$\nu \sigma^{-1}$ is an inner automorphism.
\end{enumerate}
\end{lemma}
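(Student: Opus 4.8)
The engine behind every part is a single dictionary between free generators and automorphisms, so I would set that up first. Suppose $M$ is a graded bimodule with $_AM \cong {_AA}$, and fix a homogeneous element $\fe$ that freely generates $_AM$, so that each element of $M$ is uniquely of the form $a\fe$. For homogeneous $b$ the element $\fe\cdot b$ (right action) lies in $M=A\fe$, so there is a unique $\nu(b)\in A$ with $\fe\cdot b = \nu(b)\fe$. Because the two actions commute and $\fe$ is left-free, a one-line check shows $\nu$ is additive and multiplicative with $\nu(1)=1$, and comparing degrees gives $\deg\nu(b)=\deg b$, so $\nu$ is a graded algebra endomorphism; moreover $a\mapsto a\fe$ is a graded bimodule map ${^1A^\nu}\to M$ that is bijective exactly when $\nu$ is. With this in hand the forward direction of (a) is immediate: in ${^1A^\nu}$ the left action is ordinary multiplication, so $_A({^1A^\nu})\cong {_AA}$, while surjectivity of $\nu$ makes $1$ a free generator of the right module, giving $({^1A^\nu})_A\cong A_A$.

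The only real work, and what I expect to be the main obstacle, is the converse of (a): showing the endomorphism $\nu$ built above is actually bijective. This is precisely where the invertibility hypothesis must be used, and it is the one step that is not purely formal. Bringing in the second assumption $M_A\cong A_A$, I would fix a homogeneous free generator $\fe'$ of $M_A$ and write $\fe'=b_0\fe$ and $\fe=\fe'\cdot a_0$ for homogeneous $a_0,b_0$. Substituting and using $\fe\cdot a_0=\nu(a_0)\fe$ together with the left-freeness of $\fe$ yields $b_0\,\nu(a_0)=1$, so $b_0$ is right-invertible and hence, by hypothesis, a unit. Then $\fe=b_0^{-1}\fe'$ is a unit multiple of the right generator $\fe'$, so $\fe$ itself freely generates $M_A$; right-freeness of $\fe$ forces $\nu$ to be injective (from $\nu(b)\fe=\fe\cdot b$), and the identity $A\fe=M=\fe\cdot A=\nu(A)\fe$ forces $\nu(A)=A$. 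Thus $\nu\in\Autw(A)$ and $M\cong{^1A^\nu}$.

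Parts (b)--(d) then follow formally from the same dictionary. For (b), with $M={^1A^\nu}$ a homogeneous $\fe$ generates $_AM$ iff $A\fe=A$ iff $\fe$ is left-invertible, and (using that $\nu$ is onto) generates $M_A$ iff $\fe A=A$ iff $\fe$ is right-invertible; by the standing hypothesis each is equivalent to $\fe$ being a unit, so the two generation conditions coincide. For (c), I would recompute the automorphism attached to the new generator $\fe f$ (right multiplication by the unit $f$): from $(\fe f)\cdot b=\nu'(b)(\fe f)$ and $\fe\cdot b=\nu(b)\fe$ one gets $\nu'(b)=\nu(f)\nu(b)\nu(f)^{-1}=\nu(fbf^{-1})$, i.e.\ $\nu'=\nu\circ\eta_f$. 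For (d), a graded bimodule isomorphism $\phi\colon{^1A^\sigma}\to{^1A^\nu}$ amounts to a homogeneous free generator $\fe=\phi(1)$ of ${^1A^\nu}$, which is a unit by (b); translating the right-linearity of $\phi$ gives $\sigma(b)\fe=\fe\,\nu(b)$, i.e.\ $\sigma=\eta_{\fe}\circ\nu$, so $\nu\sigma^{-1}=\eta_{\fe}^{-1}$ is inner, and conversely if $\nu\sigma^{-1}=\eta_g$ then $a\mapsto ag^{-1}$ is the required isomorphism. The one subtlety to track throughout is homogeneity: since $\nu$ and $\sigma$ preserve degree, the implementing units are homogeneous (of degree $0$), so ``inner'' in (d) is understood as conjugation by a homogeneous unit, consistent with the homogeneous generators used above.
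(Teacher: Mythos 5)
Your proof is correct. There is nothing in the paper to compare it against: the authors state this as a ``standard lemma whose proof we leave to the reader,'' so your write-up simply supplies the omitted argument, and it does so along exactly the lines the authors intend --- the dictionary between free homogeneous generators and graded endomorphisms, with the standing one-sided-invertibility hypothesis invoked at the only non-formal step (showing that the endomorphism $\nu$ attached to a left generator is bijective, via the equation $b_0\,\nu(a_0)=1$ making $b_0$ a homogeneous right-invertible, hence invertible, element). Two minor remarks. First, your parenthetical claim that $a\mapsto a\fe$ ``is bijective exactly when $\nu$ is'' is not accurate: that map is bijective as soon as $\fe$ freely generates ${}_AM$, independently of $\nu$; the correct point, which is what your argument actually uses, is that the conclusion of (a) requires $\nu$ itself to be bijective, and you prove this separately. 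Second, your decision to read ``element,'' ``generator,'' and ``inner'' throughout as referring to homogeneous elements (of degree $0$ where needed) is the right interpretation, since the hypothesis of the lemma constrains only homogeneous one-sided invertibles; this is consistent with every use of the lemma in the paper (e.g.\ the generator $\fe$ in Definition \ref{xxdef3.6} is homogeneous of degree $\bfl$, and in the Hopf-algebra applications the grading is trivial, so all elements are homogeneous).
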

\noindent Of course in most common situations, for example if $A$ is ${\mathbb N}$-graded and $A_0$ is finite dimensional, or if
$A$ is a domain, then every homogeneous left-invertible (respectively,
right-invertible) element is invertible. In the situation of Lemma
\ref{xxlem1.10}, if $_AM=A\fe \cong _A A$ and $M_A=\fe A \cong A_A$, then we call $\fe$ 
a {\it generator} of the $A$-bimodule $M$.

\section{Hopf actions on bimodules and smash products}
\label{xxsec2}

In this section we collect some preliminary material about (bi)-modules over smash 
products and dualization.  Some of this material is well-known, but 
we have tried to make our presentation relatively self-contained.  
A reader who is less familiar with Hopf algebras may wish to think primarily about the 
case of group algebras $kG$ on a first reading, since the results of Sections 2-4 are 
still non-trivial in that case.

We maintain the assumptions about Hopf algebras already mentioned in the previous sections.
Let $A$ be a left $H$-module algebra; by definition
\cite[Definition 4.1.1]{Mo}, this means that $A$ is a left $H$-module such that 
$$h (ab) = \sum h_1( a)\; h_2 (b)\quad
{\text{and}} \quad h (1_A) = \epsilon(h)1_A$$ for all $h\in H$,
and all $a, b \in A$. Following \cite[Definition 4.1.3]{Mo},
the left smash product algebra $A \# H$ is defined as follows. As a
$k$-space, $A \# H = A \otimes H$, and the multiplication of
$A \# H$ is given by
$$(a \# g) (b \# h) = \sum a\; g_1 (b) \# g_2 h$$
for all $g, h \in H$ and $a,b\in A$. We identify $H$ with a
subalgebra of $A\# H$ via the map $i_H: h\to 1 \# h$ for all $h\in H$,
and identify $A$ with a subalgebra of $A\# H$ via the map
$i_A: a\to a\# 1$ for all $a\in A$.
We note the following useful formula which determines how an element
of $A$ and an element of $H$ move past each other:
\begin{equation}
\label{E2.0.1}\tag{E2.0.1} a\#h=(a\#1)(1\#h)=(1\#
h_2)(S^{-1}(h_1)(a)\#1).
\end{equation}

Let $B=A\# H$. Identifying $H$ and $A$ with subalgebras of $B$, any
left $B$-module is both a left $A$ and left $H$-module.  It is easy
to check that a $k$-vector space $M$ which is both a left $A$ and left
$H$-module has an induced left $B$-module structure restricting to
the given ones on both $A$ and $H$ if and only if the condition
$$h(am)=h_1(a)h_2(m)$$
is satisfied for all $h\in H, a\in A, m\in M$.  As a special case, if $A$ is a left
$H$-module such that $h(1_A)=\epsilon(h) 1_A$, then $A$ is a left
$H$-module algebra if and only if the left $A$-module and the left
$H$-module structures on $A$ induce a left $A \# H$-module structure
on $A$.

One can also construct a right-handed version of smash product.  We say
that $A$ is a right $H$-module algebra if it is a right $H$-module
satisfying
$$(ab)^h = a^{h_2} b^{h_1} \quad {\text{ and }}\quad
(1_A)^h = \epsilon(h) 1_A$$
for all $h \in H$ and $a, b \in A$.   While one could instead define a right $H$-module algebra 
by the more natural-seeming rule $(ab)^h = a^{h_1} b^{h_2}$, the convention we have chosen 
will allow us to more easily relate left and right smash products below.  
Note also that our convention is to write all right $H$-actions in the
exponent to avoid notational confusion.  If $A$ is
a right $H$-module algebra, then the right smash product $H \# A$ is
defined to be the tensor product $H \otimes A$ with multiplication $(h \#
a)(g \#b) = hg_2  \# a^{g_1} b$.  (We use the same symbol $\#$ for
the left and the right smash products.) As on the left, a $k$-vector space 
is a right $H \# A$-module if and only if it is a right $H$
and right $A$-module satisfying $(ma)^h = m^{h_2} a^{h_1}$ for all
$m \in M, a \in A$, and $h \in H$.

Recall that in a Hopf algebra, the antipode always satisfies the formulas 
\begin{equation}
\label{E2.0.2}\tag{E2.0.2}
(S \otimes S) \circ \Delta = \tau \circ \Delta \circ S\ \ \ \text{and}\ \ \ \epsilon \circ S = \epsilon,
\end{equation}
where $\tau: H \otimes H \to H \otimes H$ is the coordinate switch map
$(g \otimes h) \mapsto (h \otimes g)$ \cite[Proposition 1.5.10]{Mo}.
Suppose that $A$ is a left $H$-module algebra.  We may make it into a
right $H$-module algebra by defining $a^h = S^{-1}(h)(a)$ for all
$a \in A, h \in H$; this is easy to check using \eqref{E2.0.2}.  We
always use this fixed convention for making a left $H$-module algebra
into a right one, since with it we have the following nice property.
\begin{lemma}
\label{xxlem2.1}
Let $A$ be a left $H$-module algebra, and make it into a right
$H$-module algebra via $a^h = S^{-1}(h)(a)$. Then there is an algebra
isomorphism $\Psi: A \# H \cong H \# A$ given by the formula $a \# h
\mapsto h_2 \# a^{h_1}$, with inverse $\Psi^{-1}$ having the formula
$g \# b \mapsto g_1(b) \# g_2$.
\end{lemma}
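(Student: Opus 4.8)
The plan is to verify three things in turn: that $\Psi$ and $\Psi^{-1}$ are well-defined $k$-linear maps, that they are mutually inverse, and that $\Psi$ is multiplicative. The first point is immediate, since both formulas are $k$-linear and are specified directly on the underlying tensor products $A \otimes H$ and $H \otimes A$.

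For the inverse, I would compute the two composites directly in Sweedler notation. Starting from $\Psi(a \# h) = \sum h_2 \# a^{h_1}$ and applying $\Psi^{-1}$ gives $\sum (h_2)_1(a^{h_1}) \# (h_2)_2$, which by coassociativity equals $\sum h_2(S^{-1}(h_1)(a)) \# h_3 = \sum (h_2 S^{-1}(h_1))(a) \# h_3$. The collapse now rests on the antipode identity $\sum h_2 S^{-1}(h_1) = \epsilon(h)1$ applied to the inner pair, which—after reorganizing the triple coproduct so that the pair being collapsed comes from a single application of $\Delta$—returns $a \# h$. The reverse composite $\Psi \circ \Psi^{-1}$ is handled symmetrically, using instead $\sum S^{-1}(h_2) h_1 = \epsilon(h)1$; both identities are the $S^{-1}$-versions of the antipode axioms and follow from the bijectivity of $S$ together with \eqref{E2.0.2}.

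Multiplicativity is the crux. I would first record the two computations $\Psi(a \# 1) = 1 \# a$ (using $S^{-1}(1)=1$ and $1_A \cdot a = a$) and $\Psi(1 \# h) = h \# 1$ (using $\epsilon \circ S^{-1} = \epsilon$, a consequence of $\epsilon \circ S = \epsilon$ in \eqref{E2.0.2}, and the counit axiom). Thus $\Psi$ carries the subalgebras $i_A(A)$ and $i_H(H)$ onto the standard copies of $A$ and $H$ inside $H \# A$, and on each of these it is visibly an algebra map; moreover $\Psi((a\#1)(1\#h)) = \Psi(a\#1)\Psi(1\#h)$ holds by construction. Since $A \# H$ is generated by these two subalgebras, it then suffices to verify that $\Psi$ is compatible with the straightening relation $(1 \# h)(a \# 1) = \sum h_1(a) \# h_2$ of the smash product. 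Comparing $\Psi((1 \# h)(a \# 1))$ with $\Psi(1 \# h)\,\Psi(a \# 1) = (h \# 1)(1 \# a)$ and expanding both using the product rule $(p \# u)(q \# v) = \sum p q_2 \# u^{q_1} v$ of $H \# A$ reduces the required equality to $\sum h_3 \# (S^{-1}(h_2) h_1)(a) = h \# a$, which is once more the identity $\sum S^{-1}(h_2) h_1 = \epsilon(h)1$.

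I expect the main obstacle to be purely organizational: keeping the iterated coproducts straight and applying the right $H$-module algebra axiom $(xy)^h = \sum x^{h_2} y^{h_1}$ with its order reversal, together with the anti-multiplicativity $S^{-1}(gh) = S^{-1}(h) S^{-1}(g)$ of the inverse antipode (which also shows that $a^h = S^{-1}(h)(a)$ really defines a right action). No conceptual difficulty arises beyond this bookkeeping. As a fully explicit alternative to the reduction to generators, one can simply expand $\Psi((a \# g)(b \# h))$ and $\Psi(a \# g)\,\Psi(b \# h)$ on general elements; this is a longer but entirely parallel Sweedler computation resting on the same three ingredients—coassociativity, the module-algebra axiom, and the $S^{-1}$-antipode identities. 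Once multiplicativity and the two-sided inverse are in hand, $\Psi$ is an algebra isomorphism with the asserted inverse.
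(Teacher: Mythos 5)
Your proof is correct, but it is organized as a near mirror image of the paper's. The paper establishes multiplicativity of $\Psi$ by the direct Sweedler expansion of $\Psi((a\# g)(b\# h))$ on general elements (your ``fully explicit alternative''), states that a dual computation handles $\Psi^{-1}$, and then gets the inverse property essentially for free: both maps fix the copies of $A$ and $H$, these subalgebras generate, and a composite of algebra homomorphisms that is trivial on generating subalgebras is the identity. You do the reverse: you verify that $\Psi^{-1}\Psi$ and $\Psi\Psi^{-1}$ are the identity by direct computation with the inverse-antipode identities $\sum h_2 S^{-1}(h_1)=\epsilon(h)1=\sum S^{-1}(h_2)h_1$, and you obtain multiplicativity by reduction to the generating subalgebras together with the straightening relation $(1\# h)(a\# 1)=\sum h_1(a)\# h_2$. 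Each route buys something: yours needs multiplicativity only for $\Psi$ itself, since the inverse of a bijective algebra map is automatically an algebra map, so no ``dual proof'' for $\Psi^{-1}$ is required; the paper's route avoids all antipode manipulation in the inverse check. The one step you should not leave implicit is your reduction: knowing that $\Psi$ is multiplicative on each subalgebra, on products $(a\#1)(1\#h)$, and on the straightening relation does imply full multiplicativity, but this requires either invoking the presentation of $A\# H$ as the free product $A*H$ modulo the straightening relations, or writing out the chain $(a\# g)(b\# h)=\sum (ag_1(b)\#1)(1\# g_2h)$ and applying the three verified facts in order. Since you also offer the full expansion as a fallback (which is exactly the paper's argument), this is a gap of exposition rather than of substance.
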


\begin{proof}
We have
$$\begin{aligned}
\Psi((a \# g)(b \# h)) &= \Psi(ag_1(b) \# g_2 h) =
g_3 h_2 \# (ag_1(b))^{g_2 h_1}\\
&= g_4h_3 \# (a^{g_3h_2} b^{S(g_1)g_2h_1} )
= g_2 h_3 \# a^{g_1 h_2} b^{h_1},
\end{aligned}
$$
while
\[
\Psi(a \#g) \Psi(b \# h) = (g_2 \# a^{g_1})(h_2 \# b^{h_1}) \; =
g_2 h_3 \# a^{g_1 h_2} b^{h_1}.\; \quad
\]
Thus $\Psi$ is a homomorphism of algebras.  A dual proof shows that the map
$\Psi^{-1}: g \# b \mapsto g_1(b) \# g_2$ is a homomorphism
$H \# A \to A \# H$.

Note that both the formulas for $\Psi$ and $\Psi^{-1}$ act as the identity
on the subalgebras identified with $H$ and $A$.  Thus it is obvious that
$\Psi \Psi^{-1} = \Psi^{-1} \Psi = 1$, since both compositions are
clearly trivial when restricted to these subalgebras.
\end{proof}

From now on we identify the left and right smash products $A \# H$ and $H \#A$ via the isomorphism of the
previous lemma.  In particular, note that  if $M$ is a $k$-vector space
with left $A$ and  $H$-actions, then the condition $h(am) = h_1(a) h_2(m)$
is equivalent to $M$ being either a left $A \# H$ or a left $H \# A$-module
which restricts to the given actions of $A$ and $H$.  Similarly, the
condition $(ma)^h = m^{h_2} a^{h_1}$ is equivalent to being either a
right $A \# H$ or a right $H \#A$-module.

One of the main purposes of this section is to discuss actions of Hopf algebras on bimodules, where 
clearly one would like to require the Hopf action to interact with the bimodule structure in some nice way.
The twist by $S^i$ we allow in the following definition gives a slightly more general notion than what we have seen discussed in the literature.  We will see below that the extra generality will be useful to describe the dual of a bimodule with a Hopf action  (Proposition~\ref{xxpro2.7}).
\begin{definition}
\label{xxdef2.2}
Let $A$ be a left $H$-module algebra.  If $M$ is an $A$-bimodule with a
left $H$-action satisfying
\begin{equation}
\label{E2.2.1}\tag{E2.2.1} h (amb)= h_1(a)h_2(m)S^i(h_3)(b)
\end{equation}
for all $h\in H$,  $a,b\in A$, $m\in M$ and some fixed even integer $i$,
then $M$ is called an {\it $H_{S^i}$-equivariant $A$-bimodule}. When
$i = 0$, then $M$ is simply called an {\it $H$-equivariant $A$-bimodule}.
\end{definition}

\begin{remark}
\label{xxrem2.3}
Being an $H$-equivariant $A$-bimodule is equivalent to being a left
module over a certain algebra $A^{e} \rtimes H$ introduced by Kaygun
\cite[Definition 3.1, Lemma 3.3]{Kay}.  As a vector space,
$A^{e} \rtimes H = A \otimes A \otimes H$, with product given by
the formula
\[
(a_1 \otimes a'_1 \otimes h)(a_2 \otimes a'_2 \otimes g) =
a_1 h_1(a_2) \otimes h_3(a'_2) a'_1 \otimes h_2 g,
\]
for all $a_1 \otimes a'_1 \otimes h,\; a_2 \otimes a'_2 \otimes g\in
A^{e} \rtimes H$.   More specifically, if $M$ is an $H$-equivariant bimodule, one may check that it is 
an $A^e \rtimes H$-module via $(a \otimes a' \otimes h) \cdot m = ah(m)a'$.
\end{remark}

\begin{lemma}
\label{xxlem2.4}
Let $M$ be an $A$-bimodule with a left and right $H$-action  related by
\begin{equation}
\label{E2.4.1}\tag{E2.4.1} m^h=S^{-i-1}(h) (m)
\end{equation}
for all $m \in M, h \in H$, and some even integer $i$.
Then $M$ is an $H_{S^i}$-equivariant $A$-bimodule if and only if $M$
is a left $A\# H$-module and a right $A\# H$-module under the given
actions of $A$ and $H$.
\end{lemma}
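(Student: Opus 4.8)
The plan is to unwind both conditions into explicit formulas and check they are equivalent term-by-term, using the fixed relation \eqref{E2.4.1} to translate between the right $H$-action and the left $H$-action composed with an antipode power. First I would recall precisely what each side asserts. By the discussion following Lemma~\ref{xxlem2.1}, saying that $M$ is simultaneously a left $A\#H$-module (equivalently left $H\#A$-module) means $M$ carries compatible left $A$- and left $H$-actions satisfying $h(am) = h_1(a)\,h_2(m)$; saying that $M$ is simultaneously a right $A\#H$-module means $M$ carries compatible right $A$- and right $H$-actions satisfying $(ma)^h = m^{h_2}\,a^{h_1}$. Meanwhile, being $H_{S^i}$-equivariant is the single condition \eqref{E2.2.1}, namely $h(amb) = h_1(a)\,h_2(m)\,S^i(h_3)(b)$. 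So the content of the lemma is that, \emph{given} the bridging relation \eqref{E2.4.1}, the one equation \eqref{E2.2.1} is equivalent to the conjunction of the two smash-product compatibility conditions.

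Next I would prove the easy direction, that the two smash-product conditions imply \eqref{E2.2.1}. Assuming $h(am) = h_1(a) h_2(m)$, I compute $h(amb)$ by first grouping $(am)$ as the left factor: $h(amb) = h(a\cdot(mb)) = h_1(a)\,h_2(mb)$. The remaining task is to expand $h_2(mb)$. Here is where \eqref{E2.4.1} enters: I rewrite the left action on the product $mb$ in terms of the right action, apply the right-module compatibility $(mb)^g = m^{g_2}\,b^{g_1}$, and then translate each right action back to a left action via \eqref{E2.4.1}. Tracking the antipode powers, $m^g = S^{-i-1}(g)(m)$ and $b^g = S^{-1}(g)(b)$ (this last from the fixed convention $a^h = S^{-1}(h)(a)$ making $A$ a right $H$-module algebra), and using \eqref{E2.0.2} to handle $S$ applied across a coproduct, the bookkeeping should collapse to $h_2(mb) = h_2(m)\,S^i(h_3)(b)$, yielding \eqref{E2.2.1}. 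For the converse, I would start from \eqref{E2.2.1} and specialize: setting $b = 1_A$ and using $h(1_A) = \epsilon(h)1_A$ together with $S^i(h_3)(1_A)=\epsilon(h_3)1_A$ recovers the left compatibility $h(am) = h_1(a) h_2(m)$; setting $a = 1_A$ recovers $h(mb) = h_1(m)\,S^i(h_2)(b)$, which, after converting left actions to right actions via \eqref{E2.4.1}, should give exactly the right compatibility $(mb)^h = m^{h_2} b^{h_1}$.

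I expect the main obstacle to be the antipode bookkeeping in the forward direction, specifically getting the indices and antipode powers to line up when moving between \eqref{E2.4.1} for $m$ (which uses $S^{-i-1}$) and the convention $a^h = S^{-1}(h)(a)$ for elements of $A$ (which uses $S^{-1}$). The mismatch of exponents $-i-1$ versus $-1$ is precisely what produces the extra $S^i$ twist on the $b$-slot in \eqref{E2.2.1}, so the calculation must be done carefully rather than by symmetry; the hypothesis that $i$ is even is what guarantees $S^i$ is an algebra (not anti-algebra) automorphism, keeping the coproduct expansions consistent. Once the exponent arithmetic is pinned down, both implications reduce to routine applications of the Hopf axioms \eqref{E2.0.2}, coassociativity, and the counit identities, so the substance of the argument lies entirely in this translation dictionary between the left action, the right action, and the antipode.
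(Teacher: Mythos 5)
Your proposal is correct and takes essentially the same approach as the paper: both unwind the smash-product module conditions into $h(am)=h_1(a)h_2(m)$ and $(mb)^h=m^{h_2}b^{h_1}$, use \eqref{E2.4.1} as the dictionary between left and right $H$-actions, and handle the antipode bookkeeping via \eqref{E2.0.2}, with the parity of $i$ controlling which powers of $S$ flip the coproduct. The paper writes out only the direction (equivariant $\Rightarrow$ both module structures) — exactly your specialization-plus-translation computation — and dismisses the other as "similar," so your plan is if anything slightly more complete.
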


\begin{proof}
We have already seen that $M$ being a left $A \# H$-module is equivalent
to the condition $h(am) = h_1(a)h_2(m)$ for all $h\in H, a\in A, m\in M$,
and that being a right $A \# H \cong H \# A$-module is equivalent to
$(mb)^h = m^{h_2} b^{h_1}$ for all $h \in H$, $b \in A$, $m \in M$.   If
$M$ is an $H_{S^i}$-equivariant $A$-bimodule, then in particular we have
$h(am) = h_1(a) h_2(m)$ and
\begin{align*}
(mb)^h &= S^{-i-1}(h)(mb) =  S^{-i-1}(h)_1(m) S^{i} (S^{-i-1}(h)_2)(b)
\\
 &= S^{-i-1}(h_2)(m) S^{-1}(h_1)(b)  \qquad \qquad \qquad \qquad \qquad
\text{using}\ \eqref{E2.0.2} \\
 & = m^{h_2} b^{h_1},
\end{align*}
so that $M$ is a left and right $A \# H$-module.  The converse is similar.
\end{proof}

Given any $H$-equivariant $A$-bimodule $M$, one can define an
$A\# H$-bimodule $M\# H$ with left and right $A \#H$ actions given by
formulas analogous to the multiplication in $A \#H$; the proof that it is
an $A\# H$-bimodule is virtually the same as the proof that the multiplication of $A \# H$ is 
associative.  See, for example, \cite[Lemma 2.5(1)]{LiWZ}.
We will need a generalization of this construction where we allow the bimodule to be $H_{S^i}$-equivariant and 
we smash with a twist of $H$ by an automorphism.
\begin{lemma}
\label{xxlem2.5}
Let $B = A \# H$ for some left $H$-module algebra $A$. Let $M$ be an
$H_{S^i}$-equivariant $A$-bimodule, and make $M$ into a right
$H$-module as in \eqref{E2.4.1}.  Suppose that $\sigma: H \to H$
is an algebra automorphism of $H$.
\begin{enumerate}
\item
Choose a left and right generator $\fu$ of the $H$-bimodule ${}^{\sigma} H^1$, such that
$h \fu = \fu \sigma(h)$.
Let $M \# {}^{\sigma} H^1$ be the tensor product $M \otimes
{^{\sigma} H^1}$, with left $B$-action
$$(a \# g)(m \# \fu h) = ag_1(m) \# g_2 \fu h$$
and right $B$-action
$$(m \# \fu h)(b \# k) = m h_1(b) \# \fu h_2 k.$$
If $\sigma$ satisfies the property
\begin{equation}
\label{E2.5.1}\tag{E2.5.1}
\Delta \circ \sigma = (S^i \otimes \sigma) \circ \Delta,
\end{equation}
then $M \# {}^{\sigma} H^1$ is a $B$-bimodule.
\item
Choose a left and right generator $\fu$ of the $H$-bimodule ${}^{1} H^{\sigma}$, such that
$\sigma(h) \fu = \fu h$. 
Let $^1 H ^{\sigma} \# M$ be the tensor product $^1 H ^{\sigma} \otimes M$,
with  left $B \cong H \# A$-action
$$(g \# a)(h \fu \# m) = g h_2 \fu  \# a^{h_1} m$$
and right $B$-action
$$(h \fu \# m)(k \# b) = h \fu k_2 \# m^{k_1} b.$$
If $\sigma$ satisfies the property
\begin{equation}
\label{E2.5.2}\tag{E2.5.2}
\Delta \circ \sigma = (S^{-i} \otimes \sigma) \circ \Delta,
\end{equation}
then $^1 H ^{\sigma} \# M$ is a $B$-bimodule.
\item
\eqref{E2.5.1} holds for any automorphism of the form
$\sigma = S^i \circ \Xi^r_{\gamma}$, where $\gamma: H \to k$ is an
algebra map.  Similarly, \eqref{E2.5.2} holds for any automorphism of
the form $S^{-i} \circ \Xi^{r}_{\gamma}$.
\end{enumerate}
\end{lemma}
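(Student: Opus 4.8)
The plan is to handle the three parts in turn, the real content living in part~(1); parts~(2) and~(3) will then follow by a mirror computation and a short coalgebra calculation. For part~(1) I would verify three things in order: that the stated left action makes $M \# {}^{\sigma}H^1$ a left $B$-module, that the stated right action makes it a right $B$-module, and that the two actions commute, giving a bimodule. The first two checks are formal and use only that $A$ is a left $H$-module algebra, namely the axioms $h(ab)=h_1(a)h_2(b)$ and the module-compatibility $h(am)=h_1(a)h_2(m)$ on $M$, together with coassociativity of $\Delta$; crucially they do \emph{not} use the hypothesis \eqref{E2.5.1}. For instance, left-associativity amounts to expanding both $((a\#g)(b\#h))(m\#\fu k)$ and $(a\#g)((b\#h)(m\#\fu k))$ and observing that each collapses to $a\,g_1(b)\,g_2(h_1(m)) \# g_3 h_2 \fu k$ after applying the compatibility $h(am)=h_1(a)h_2(m)$ to $g_1(b\,h_1(m))$ and reindexing; right-associativity is entirely analogous.

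The crux is the bimodule compatibility $((a\#g)\,x)(c\#l)=(a\#g)\,(x\,(c\#l))$ for $x=m\#\fu h$, and this is exactly where \eqref{E2.5.1} is forced upon us. The mechanism is that moving the $H$-factor $g_2$ across the generator via $g_2\fu=\fu\sigma(g_2)$ introduces $\sigma$; when one then applies the right action and must expand $\Delta(\sigma(g_2))$, hypothesis \eqref{E2.5.1} rewrites it as $S^i(g_2)\otimes\sigma(g_3)$, so the $A$-factor picks up an $S^i$-twist, yielding $a\,g_1(m)\,S^i(g_2)(h_1(c)) \# \fu\,\sigma(g_3)h_2 l$. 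On the other side, computing $g_1(m\,h_1(c))$ via the $H_{S^i}$-equivariance \eqref{E2.2.1} — specifically its $a=1$ case $h(mb)=h_1(m)\,S^i(h_2)(b)$ — produces the identical $S^i$-twist and the same expression. I expect the matching of these two independent occurrences of $S^i$, one coming from the defining property \eqref{E2.5.1} of $\sigma$ and one from the equivariance exponent in \eqref{E2.2.1}, to be the main obstacle: it is the sole place where the even integer $i$ must be threaded carefully through the Sweedler indices, and it is the reason the specific condition \eqref{E2.5.1} (rather than some other relation between $\Delta$ and $\sigma$) is the correct hypothesis. Everything else is bookkeeping.

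For part~(2) I would run the mirror computation with the generator satisfying $\sigma(h)\fu=\fu h$: moving factors across $\fu$ on the appropriate side now forces the opposite twist $S^{-i}$, which is precisely \eqref{E2.5.2}. Alternatively one can transport part~(1) along the algebra isomorphism $\Psi\colon A\#H\cong H\#A$ of Lemma~\ref{xxlem2.1}, using the dictionary of Lemma~\ref{xxlem2.4} between equivariant bimodules and left-and-right $A\#H$-modules. Finally, part~(3) is a direct coalgebra calculation: from $\Xi^r_\gamma(h)=\sum h_1\gamma(h_2)$ one checks $\Delta\circ\Xi^r_\gamma=(1\otimes\Xi^r_\gamma)\circ\Delta$, and since $i$ is even $S^i$ is a coalgebra automorphism, $\Delta\circ S^i=(S^i\otimes S^i)\circ\Delta$ (iterate \eqref{E2.0.2}). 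Composing gives $\Delta\circ(S^i\circ\Xi^r_\gamma)=(S^i\otimes S^i\circ\Xi^r_\gamma)\circ\Delta=(S^i\otimes\sigma)\circ\Delta$, which is \eqref{E2.5.1}; the case $\sigma=S^{-i}\circ\Xi^r_\gamma$ giving \eqref{E2.5.2} is identical with $S^i$ replaced by $S^{-i}$.
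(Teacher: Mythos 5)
Your proposal is correct and follows essentially the same route as the paper: a direct check that the mixed associativity $((a\#g)x)(c\#l)=(a\#g)(x(c\#l))$ is the only axiom requiring the hypothesis on $\sigma$, with the two sides matched by expanding $\Delta(\sigma(g_2))$ via \eqref{E2.5.1} on one side and invoking the $H_{S^i}$-equivariance \eqref{E2.2.1} (with trivial left entry) on the other, plus the same coalgebra computation for part (c) using that $S^i$ is a coalgebra map for even $i$. The only cosmetic difference is that the paper writes out part (b) in detail (after first recording the identity $(am)^k=a^{S^{-i}(k_2)}m^{k_1}$) and leaves (a) as the mirror case, whereas you do (a) in detail and mirror (b).
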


\begin{proof}
(a)  We leave the proof of this part to the reader, since it is similar to but a bit simpler than the proof of part (b).
 \begin{comment}
 The proof that $N = M \# {}^{\sigma} H^1$ is a left and right
$B$-module is straightforward using Lemma~\ref{xxlem2.4}, and does
not require \eqref{E2.5.1}; we leave it to the reader.

We check carefully only that $M \# H$ is a $B$-bimodule.  We have
$$ \begin{aligned}
\; [(a\# g)(m \# \fu h)] (b \# k) & =
(ag_1(m) \# g_2 \fu h )(b \#k) = (ag_1(m) \# \fu \sigma(g_2)h)(b \#k)\\
&= ag_1(m) (\sigma(g_2)h)_1(b)) \# \fu (\sigma(g_2)h)_2 k\\
&=ag_1(m) \sigma(g_2)_1 (h_1(b)) \# \fu \sigma(g_2)_2 h_2 k
\end{aligned}
$$
while
$$\begin{aligned}
(a\# g)[(m \# \fu h)(b \# k)] &=
(a \# g)(mh_1(b) \# \fu h_2k) = ag_1(mh_1(b)) \# g_2 \fu h_2k \quad \\
&= ag_1(m)S^i(g_2)(h_1(b)) \# \fu \sigma(g_3) h_2 k.
\end{aligned}
$$
Using the hypothesis that $\Delta \circ \sigma = (S^i \otimes \sigma)
\circ \Delta$, applying this formula to $g_2$  we have
$\Delta(\sigma(g_2))  = S^{i}(g_2) \otimes \sigma(g_3)$,
which implies that the two expressions above are the same.
\end{comment}

(b)  It is straightforward to see that $^1 H ^{\sigma} \# M$ is a left and right $B$-module, and so 
we check carefully only that $^1 H ^{\sigma} \# M$ is a $B$-bimodule.
Note first that 
\[
(am)^k = S^{-i-1}(k)(am) = S^{-i-1}(k_2)(a) S^{-i-1}(k_1)(m) =
a^{S^{-i}(k_2)} m^{k_1}.\qquad \quad
\]
Now we have
$$\begin{aligned}
\; [(g\# a)(h \fu \# m)] (k \# b) &= (gh_2 \fu \# a^{h_1} m)(k \# b) =
(gh_2 \fu k_2 \# (a^{h_1}m)^{k_1}b) \qquad\\
&= gh_2\sigma(k_3) \fu \# a^{h_1 S^{-i}(k_2)} m^{k_1} b, 
\end{aligned}
$$
while
$$\begin{aligned}
\quad (g\# a)[(h \fu \# m) (k \# b)] &= (g\# a)(h \fu k_2 \# m^{k_1} b) =
g (h \sigma(k_2))_2 \fu \# a^{(h \sigma(k_2))_1} m^{k_1} b \\
&= g h_2 \sigma(k_2)_2 \fu \# a^{h_1 \sigma(k_2)_1} m^{k_1} b.
\end{aligned}
$$
Using the hypothesis that $\Delta \circ \sigma = (S^{-i} \otimes \sigma)
\circ \Delta$, applying this formula to $k_2$  we have
$\Delta(\sigma(k_2))  = S^{-i}(k_2) \otimes \sigma(k_3)$,
which implies that the two expressions above are the same.

(c)  If $\sigma = S^i \circ \Xi^r_{\gamma}$, then
$$\begin{aligned}
\qquad\quad \Delta(\sigma(h))&=\Delta(S^{i}(h_1) S^i(\gamma(h_2)))=
\Delta(S^{i}(h_1) (\gamma(h_2)))\\
&=  S^{i}(h_1)\otimes
S^{i}(h_2)\gamma(h_3) =S^{i}(h_1)\otimes \sigma(h_2) \\
& = (S^i \otimes \sigma) \circ \Delta(h)
\end{aligned}
$$
for all $h\in H$. The other calculation is analogous.
\end{proof}

The next goal is to discuss the behavior of the constructions above 
under $k$-linear duals.    First, we recall the structure of the dual $H^* = \operatorname{Hom}_k(H,k)$ of a
finite-dimensional Hopf algebra $H$. 
\begin{lemma}
\label{xxlem2.6}
Let $H$ be a finite-dimensional Hopf algebra.  Then as $H$-bimodules,
$H^* \cong {}^1H^\sigma$ where $\sigma = \mu_{H}^{-1} =
S^{2}\circ \Xi^r_{\int^r}$.
\end{lemma}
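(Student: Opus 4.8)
The plan is to separate the statement into its two assertions and prove them in turn. The second equality, $\mu_H^{-1} = S^2\circ\Xi^r_{\int^r}$, is pure bookkeeping. Since $H$ is finite dimensional it is Frobenius, so by the remark opening Section~\ref{xxsec1} the homological Nakayama automorphism of Definition~\ref{xxdef0.1} agrees with the classical one, and the classical formula for the Nakayama automorphism of a finite-dimensional Hopf algebra (the Frobenius counterpart of Lemma~\ref{xxlem1.3}) gives $\mu_H = S^{-2}\circ\Xi^r_{\int^l}$; here $\injdim H = 0$, so the homological integral $\int^l$ reduces to the classical modular function. Inverting this and using the facts already recorded in the text --- that $\Xi^r_{\int^l}$ and $\Xi^r_{\int^r}$ are mutually inverse and that every winding automorphism commutes with $S^2$ --- I would compute
\[
\mu_H^{-1} = (\Xi^r_{\int^l})^{-1}\circ S^2 = \Xi^r_{\int^r}\circ S^2 = S^2\circ\Xi^r_{\int^r}.
\]
Thus it remains to establish the genuine content, the $H$-bimodule isomorphism $H^*\cong {}^1H^{\mu_H^{-1}}$.

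For that isomorphism I would work directly with the Frobenius structure. Equip $H^*$ with the bimodule structure dual to the regular bimodule, namely $(a\cdot f\cdot b)(h) = f(bha)$, and fix a nonzero right integral $\lambda\in H^*$. By the Larson--Sweedler theorem $\lambda$ is a Frobenius functional, so the form $\beta(x,y)=\lambda(xy)$ is nondegenerate, and its Nakayama automorphism $\nu$ is characterized by $\lambda(xy)=\lambda(y\,\nu(x))$. Define $\theta\colon H\to H^*$ by $\theta(x)(h)=\lambda(hx)$; nondegeneracy of $\beta$ makes $\theta$ a linear isomorphism. A one-line check gives left $H$-linearity, since $(a\cdot\theta(x))(h)=\theta(x)(ha)=\lambda(hax)=\theta(ax)(h)$, while on the right $(\theta(x)\cdot b)(h)=\theta(x)(bh)=\lambda(bhx)=\lambda(hx\,\nu(b))=\theta(x\nu(b))(h)$. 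Hence $\theta$ intertwines the right action $x\cdot b=x\nu(b)$ of ${}^1H^{\nu}$ with the right action on $H^*$, so $\theta$ is an $H$-bimodule isomorphism ${}^1H^{\nu}\xrightarrow{\ \sim\ }H^*$. It then suffices to identify $\nu$ with $\mu_H^{-1}=S^2\circ\Xi^r_{\int^r}$.

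The main obstacle is precisely this last identification, the one place where real Hopf-algebraic input enters. The plan is to compute $\nu$ from $\lambda(xy)=\lambda(y\,\nu(x))$ by feeding in the defining property of the right integral, $\sum\lambda(x_1)x_2=\lambda(x)1$, together with the standard relation describing how $\lambda$ transforms under $S$ in terms of the modular function $\int^l$ (a Radford-type identity), and then rewriting the resulting character via $\int^r=\int^l\circ S$. Carefully tracking the powers of the antipode and distinguishing left from right winding automorphisms is the delicate, convention-sensitive step, and it is exactly the routine-but-fiddly calculation whose outcome is forced to be $\nu = S^2\circ\Xi^r_{\int^r}$. Alternatively, one may simply invoke the classical computation of the Nakayama automorphism of a finite-dimensional Hopf algebra to read off $\nu=\mu_H^{-1}$ directly; consistency of the two formulas $\mu_H=S^{-2}\circ\Xi^r_{\int^l}$ and $\nu=S^2\circ\Xi^r_{\int^r}$ serves as a useful sanity check. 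Either way, combining this with the isomorphism ${}^1H^{\nu}\cong H^*$ of the previous paragraph completes the proof.
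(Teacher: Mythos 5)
Your proposal is correct and takes essentially the same route as the paper: both arguments rest on the Larson--Sweedler theorem (a nonzero integral of $H^*$ freely generates $H^*$ as a left and right $H$-module, so $H^*\cong{}^1H^{\sigma}$ for some automorphism $\sigma$) together with the classical computation identifying $\sigma$ --- equivalently, the inverse of the Frobenius--Nakayama automorphism of $H$ --- with $S^2\circ\Xi^r_{\int^r}$. The one substantive step you leave unfinished, the ``routine-but-fiddly'' identification $\nu=S^2\circ\Xi^r_{\int^r}$, is exactly the computation the paper also declines to redo, citing instead the left-handed version of \cite[Lemma 1.5]{FMS}; supplying that reference in place of your appeal to ``the classical computation'' closes your argument, and the remaining differences (a right rather than a left integral of $H^*$, your explicit map $\theta(x)(h)=\lambda(hx)$ in place of Lemma~\ref{xxlem1.10}, and proving the formula for $\mu_H^{-1}$ first rather than last) are purely presentational.
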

\begin{proof}
Let $0\neq \fu\in H^*$ be a left integral of the Hopf algebra
$H^*$ \cite[Definition 2.1.1]{Mo}.  Then by definition, in the algebra $H^*$
we have $g \fu = \epsilon^*(g) \fu$ for all $g \in H^*$, where here
$\epsilon^*$ is the counit of $H^*$, given by $\epsilon^*(f)=f(1_H)$.
It is easy to check that the fact that $\fu$ is a left integral is
equivalent to
\begin{equation}
\label{E2.6.1}\tag{E2.6.1}
\sum h_1 \fu(h_2)=\fu(h) 1_H
\end{equation}
for all $h\in H$ \cite[Remark 5.1.2]{DNR}.
%(Unless there is a reference, I feel
%it wouldn't hurt to give the proof:  The condition that $\fu$ is
% an integral is that $f \fu = \epsilon(f) \fu = f(1) \fu$ for all $f$,
% so applying to an arbitrary element $h$, $f(h_1) \fu(h_2) = f(1) \fu(h)$.
% Pick some basis $e_i$ for $H$ and take
% $f_i \in H^*$ to be the dual basis.  Then
% $\sum_i e_i f_i(h_1) \fu(h_2) = \sum_i e_i f_i(1) \fu(h)$
% becomes $h_1 \fu(h_2) = \fu(h)$.)
By \cite[Theorem 2.1.3(3) and its proof]{Mo}, $\fu$ is a generator of
the left and right $H$-module $H^*$, with $H\fu \cong H$ as left $H$-modules 
and $\fu H \cong H$ as right $H$-modules.  Therefore, by Lemma~\ref{xxlem1.10} there is an
algebra automorphism $\sigma$ of $H$ and an isomorphism $\phi: H^* \to {^1H^\sigma}$
of $H$-bimodules with $\phi(h\fu)  =  h$; in other words, we have the
formula $\sigma(h) \fu= \fu h$ for $h \in H$. The left-handed version of
the computation in the proof of \cite[Lemma 1.5]{FMS} now shows that
$\sigma=S^{2}\circ \Xi^r_{\int^r} =S^2\circ \Xi^r_{\int^l\circ S}$
(the formula \eqref{E2.6.1} is needed in the computation).
  
Since $H$ is finite-dimensional, it is also Frobenius, and so
$\sigma^{-1}$ is its Nakayama automorphism $\mu_H$ (see the discussion at the 
end of Section 3).  In fact, then the formula for $\sigma$ also follows from Lemma \ref{xxlem1.3}, since 
\[
(S^2\circ \Xi^r_{\int^l\circ S})^{-1} = (\Xi^r_{\int^l \circ S})^{-1} \circ S^{-2} = \Xi^r_{\int^l} \circ S^{-2} = S^{-2} \circ \Xi^r_{\int^l}
\]
(see the discussion of winding automorphisms and integrals in Section 1).
\end{proof}

In our applications of this section below, usually $A$ will be an ${\mathbb N}$-graded algebra 
$A = \bigoplus_{i \geq 0} A_i$ which is locally finite ($\dim_k A_i < \infty$ for all $i$) 
and which is a left $H$-module algebra, where the action of $H$ respects the grading in the sense that each $A_i$ is a left $H$-submodule.  We then say that $A$ is a \emph{graded left $H$-module algebra}.  More generally, a \emph{graded $H_{S^i}$-equivariant $A$-bimodule} will be a $\mb{Z}$-graded $A$-bimodule $M$ satisfying Definition~\ref{xxdef2.2}, such that the action of $H$ respects the grading.  
In the next result, we see why we defined both a left and right-sided version of the bimodule smash construction 
in Lemma~\ref{xxlem2.5}:  taking a $k$-linear dual naturally changes from one of the versions to the other.
When we are working with $\mb{Z}^w$-graded modules $M$, unless
otherwise noted, by $M^*$ we will always mean the \emph{graded dual}, that
is $M^* = \bigoplus_{\lambda \in \mb{Z}^w} \Hom_k(M_{\lambda}, k)$, which
is naturally again a $\mb{Z}^w$-graded vector space (where elements of
$\Hom_k(M_{\lambda}, k)$ have degree $-\lambda$).  Note that when $M$ is
locally finite, the graded dual $M^*$ remains locally finite and the
usual isomorphism $M^{**} \cong M$ holds.  
%In particular, this is the
%meaning of the dual in Definition \ref{xxdef3.3}(c).
\begin{proposition}
\label{xxpro2.7}
Let $A$ be a locally finite graded left $H$-module algebra for a Hopf algebra $H$, and let $B = A \# H$.
Let $M$ be a locally finite $H_{S^i}$-equivariant $A$-bimodule, which we make into a right $H$-module as well as in
\eqref{E2.4.1}. Let $M^*$ be the graded $k$-linear dual of $M$.
\begin{enumerate}
\item
$M^*$ is a graded $H_{S^{-i-2}}$-equivariant  $A$-bimodule.
\item
Let $H$ be finite-dimensional, and assume that $i = 0$.  Then
$$(M \# H)^* \cong (H^*) \# M^* \cong {}^1 H^{\sigma} \# M^*$$
as $B$-bimodules, where $\sigma = S^{2}\circ \Xi^r_{\int^r}$.  Here, $M \# H$ is the
$B$-bimodule constructed in Lemma~\ref{xxlem2.5}(a) and
${}^1 H^{\sigma} \# M^*$ is the $B$-bimodule constructed
in Lemma~\ref{xxlem2.5}(b).
\end{enumerate}
\end{proposition}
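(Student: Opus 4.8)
The plan is to handle part (a) by recognizing the equivariance condition as a $B$-bimodule structure (with $B = A\#H$), and to handle part (b) by dualizing the smash construction explicitly, with the integral of $H^*$ supplying the passage from the left-handed to the right-handed version of Lemma~\ref{xxlem2.5}.

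For (a), I would invoke Lemma~\ref{xxlem2.4}: since $M$ is $H_{S^i}$-equivariant and its two $H$-actions are tied together by \eqref{E2.4.1}, $M$ is simultaneously a left and a right $B$-module, i.e.\ a $B$-bimodule. The graded dual of a graded $B$-bimodule is again one, via the transpose actions $(x\phi)(m)=\phi(mx)$ and $(\phi y)(m)=\phi(ym)$ for $x,y\in B$; local finiteness guarantees $M^{**}\cong M$ and that these are honest graded actions, and $H$ preserving degrees keeps everything graded. Restricting this $B$-bimodule structure to the subalgebras $A$ and $H$, a one-line computation gives the left actions $(af)(m)=f(ma)$, $(hf)(m)=f(S^{-i-1}(h)(m))$ and the right actions $(fb)(m)=f(bm)$, $(f^h)(m)=f(h(m))$. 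Comparing the two $H$-actions yields $f^h = S^{i+1}(h)\cdot f = S^{-(-i-2)-1}(h)\cdot f$. Since $i$ is even, so is $-i-2$, and the converse direction of Lemma~\ref{xxlem2.4}, applied with exponent $-i-2$ in place of $i$, shows that $M^*$ is a graded $H_{S^{-i-2}}$-equivariant $A$-bimodule. Thus (a) costs essentially no computation beyond identifying these four actions.

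For (b), set $i=0$, so that $M^*$ is $H_{S^{-2}}$-equivariant by (a). First I would check that the construction of Lemma~\ref{xxlem2.5}(b) is even available for $M^*$: the relevant exponent is now $-2$, so \eqref{E2.5.2} requires $\Delta\circ\sigma=(S^{2}\otimes\sigma)\circ\Delta$, and by Lemma~\ref{xxlem2.5}(c) this holds precisely for $\sigma=S^{2}\circ\Xi^r_\gamma$; taking $\gamma=\int^r$ gives the asserted $\sigma=S^{2}\circ\Xi^r_{\int^r}$. The second isomorphism $H^*\#M^*\cong {}^1H^\sigma\#M^*$ is then immediate: Lemma~\ref{xxlem2.6} supplies an isomorphism of $H$-bimodules $H^*\cong {}^1H^\sigma$ for exactly this $\sigma$, and since the construction $-\,\#\,M^*$ of Lemma~\ref{xxlem2.5}(b) depends on its $H$-bimodule slot only up to isomorphism, transporting along $H^*\cong{}^1H^\sigma$ yields a $B$-bimodule isomorphism (indeed $H^*\#M^*$ is by definition this transported copy). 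The heart of (b) is therefore the first isomorphism $(M\#H)^*\cong H^*\#M^*$. Since $H$ is finite dimensional (concentrated in degree $0$) and $M$ is locally finite, the standard pairing $\langle \psi\otimes f,\,m\otimes h\rangle=\psi(h)\,f(m)$ gives a graded linear isomorphism $(M\#H)^*=(M\otimes H)^*\cong H^*\otimes M^*=H^*\#M^*$, and I would verify that it is a $B$-bimodule map by matching the four restricted actions. Computing on $M\#H$ (Lemma~\ref{xxlem2.5}(a) with $\sigma=\id$), the dual left action of $a\#g$ is $\phi\mapsto\phi(\,\cdot\,(a\#g))$ and the dual right action of $b\#k$ is $\phi\mapsto\phi((b\#k)\,\cdot\,)$; these must be matched against the explicit formulas of Lemma~\ref{xxlem2.5}(b). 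The $A$-actions match immediately.

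The main obstacle I anticipate is the $H$-bookkeeping in this last step. The right $H$-action on $M\#H$ distributes $h$ across both tensor legs through $\Delta$, and under the pairing the comultiplication of $H$ becomes the multiplication of $H^*$, so matching the formulas forces one to rewrite the $h$-terms in terms of a generator of $H^*$. This is exactly where the left integral $\fu$ of $H^*$ enters: the defining identity \eqref{E2.6.1}, together with the antipode relations \eqref{E2.0.2} and the equation $\sigma(h)\fu=\fu h$ from Lemma~\ref{xxlem2.6}, is what converts the dualized left-handed formulas into the right-handed formulas of Lemma~\ref{xxlem2.5}(b) and accounts for the appearance of $\sigma$. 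None of the individual manipulations is deep, but tracking all four actions simultaneously through the graded duality is where Sweedler-index errors would creep in, so I would organize the verification around the two subalgebras $A$ and $H$ separately, and use Lemma~\ref{xxlem2.4} once more to reduce the $H$-part to checking the action on the single generator $\fu$.
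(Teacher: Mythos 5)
Your part (a) is essentially the paper's proof: dualize the two $B$-module structures, observe that $r_h|_{M^*}=(l_h|_M)^*=l_{S^{i+1}(h)}|_{M^*}$, and invoke the converse direction of Lemma~\ref{xxlem2.4} with exponent $-i-2$. Your skeleton for part (b) --- the evaluation pairing $(H^*\#M^*)\otimes(M\#H)\to k$, the identification $H^*\cong {}^1H^\sigma$ from Lemma~\ref{xxlem2.6}, and the reduction to matching the four restricted actions of $A$ and $H$ on the two sides --- is also exactly the paper's.

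However, you have misassigned where the real work lies, and one step as you describe it would fail. The claim that ``the $A$-actions match immediately'' is false for the \emph{left} $A$-action: on ${}^1H^\sigma\#M^*$ the left $A$-action of Lemma~\ref{xxlem2.5}(b) is $(1\#a)(h\fu\#\phi)=h_2\fu\#a^{h_1}\phi$, which mixes the two tensor legs through the coproduct, while on $(M\#H)^*$ the left $A$-action is the transpose of the mixing right action $(v\#w)(a\#1)=v\,w_1(a)\#w_2$. Reconciling these is precisely where the integral identity \eqref{E2.6.1}, $\sum w_1\fu(w_2)=\fu(w)1_H$, is needed; the paper moreover first reduces to elements of the special form $\fu\#\phi$ by combining the relation $(1\#a)(h\#1)=(h_2\#1)(1\#a^{h_1})$ in $H\#A$ with the already-verified agreement of the left $H$-actions (this reduction has nothing to do with Lemma~\ref{xxlem2.4}, which you cite for it). Conversely, the $H$-action checks, which you single out as the locus of difficulty requiring \eqref{E2.6.1} and $\sigma(h)\fu=\fu h$, in fact follow directly from the definitions of the dual actions on $H^*$ and on $M^*$ (e.g.\ $kh\fu(w)=h\fu(wk)$ and $\phi^{k}(v)=\phi(k(v))$); no integral identity enters there. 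So all the needed ingredients appear in your proposal, but attached to the wrong steps: executed as written, the ``immediate'' left $A$-check is the one that breaks, and it is the only place where the integral property of $\fu$ is actually invoked.
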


\begin{proof}
(a) Write $l_h|_M$ for the map $M \to M$ given by left multiplication by $h$.  Similarly, $r_h|_M$ means 
the right multiplication by $h$.  By assumption, the left and right $H$-actions on $M$ satisfy
$$r_h|_{M}=l_{S^{-i-1}(h)}|_{M}$$
for all $h\in H$. Note that $r_h|_{M^*}=(l_h|_{M})^*$ and similarly $(r_h|_{M})^* = l_h|_{M^*}$. Then
$$r_h|_{M^*}=(l_h|_{M})^*=(r_{S^{i+1}(h)}|_{M})^*=l_{S^{i+1}(h)}|_{M^*}$$
for all $h\in H$. Clearly $M^*$ is still an $A$-bimodule and a $B$-module
on both sides, and so it is an $H_{S^{-i-2}}$-equivariant $A$-bimodule
by Lemma~\ref{xxlem2.4}.

(b) $M^*$ is an $H_{S^{-2}}$-equivariant $A$-bimodule by part (1), and
since $\sigma =  S^{2}\circ \Xi^r_{\int^r}$ satisfies \eqref{E2.5.2}
by  Lemma~\ref{xxlem2.5}(c), $^1 H^{\sigma} \# M^*$ is a well-defined
$B$-bimodule. Since $M \# H$ is a $B$-bimodule, $(M \# H)^*$ is also a
$B$-bimodule.

Since $H$ is finite-dimensional, we can identify $(M \otimes H)^*$
with $H^* \otimes M^*$ as a $k$-vector space.  Since we are taking graded duals and $M$ is locally finite, the bilinear form $\langle \cdot , \cdot \rangle$ given by the natural evaluation map 
$(H^* \# M^*) \otimes (M \# H) \to k$ is a perfect pairing.  We can identify
$H^*$ with $^1 H^{\sigma}$ as $H$-bimodules, by Lemma~\ref{xxlem2.6}, 
where $\fu \in H^*$, the left integral of $H^*$, is a right and left generator with $\fu h = \sigma(h) \fu$ for all $h \in H$.  
Thus to complete the proof, we need only
verify that under these identifications, the left and right $B$-module
structures on  $(M \# H)^*$ and $^1 H^{\sigma} \# M^*$ agree.  For
this it is enough to prove that the left and right $A$-module and $H$-module
structures all agree.  Note that it is most natural to think of $B$ as the right smash product $H \# A$ in the 
proof below.

Considering the left $H$-structure, choose 
$h \fu \# \phi \in  {}^1 H^{\sigma} \# M^*$, 
$(v \# w) \in M \# H$, and $k \in H$.  We have
\[
\langle (k(h \fu \# \phi),  v \# w\rangle =
\langle (h \fu \# \phi),  (v\# w) k\rangle =
 \langle (h \fu \# \phi), (v\# wk) \rangle = h \fu(wk) \phi(v),
\]
while
\[
\langle ((k \# 1)(h \fu \# \phi),  v \# w\rangle =
\langle ((k h \fu \# \phi),  v \# w\rangle =
kh \fu(w) \phi(v).
\]
These are the same because $kh \fu(w) = h \fu(wk)$ by
the definition of the left $H$-action on $H^*$.

The agreement of the right $A$-structures is similarly straightforward and we leave 
the proof to the reader. 
\begin{comment}
\[
\langle ((h \fu \# \phi)a,  v \# w\rangle =
\langle (h \fu \# \phi),  a(v\# w) \rangle =
 \langle (h \fu \# \phi), (av\# w) \rangle = h \fu(w) \phi(av)
\]
while
\[
\langle ((h \fu \# \phi)(1 \# a),  v \# w\rangle =
\langle ((h \fu \# \phi a),  v \# w\rangle =
h \fu(w) \phi a(v),
\]
and $(\phi a)(v) = \phi(av)$ by definition.
\end{comment}

For the right $H$-structure, we have
\begin{gather*}
\langle ((h \fu \# \phi)k,  v \# w\rangle =
 \langle ((h \fu \# \phi),  k(v \# w)\rangle
=
\langle ((h \fu \# \phi),  (k_1(v) \# k_2 w) \rangle  \\
=
h \fu(k_2w) \phi(k_1(v)),
\end{gather*}
while
\begin{gather*}
\langle ((h \fu \# \phi)(k \# 1),  v \# w\rangle =
 \langle ((h \fu k_2 \# \phi^{k_1}),  (v \# w)\rangle
= h \fu k_2(w) \phi^{k_1}(v)
 \\
= h \fu(k_2w) \phi(k_1(v)).
\end{gather*}

Since $(h \fu \otimes \phi) = (h \# 1)(\fu \otimes \phi)$, and we
have proved that the left $H$-actions agree already, it is enough to
show that the left $A$-actions agree when acting on an element of
the special form $(\fu \# \phi)$, because of the formula $(1 \# a)(h
\# 1) = (h_2 \# 1)(1 \# a^{h_1})$ which holds in the right smash product.
  In this case we have
$$\begin{aligned}
\langle  a(\fu \otimes \phi),& v\# w\rangle
=\langle \fu \otimes \phi, (v\# w)a\rangle
=\langle \fu \otimes \phi, vw_1(a)\# w_2\rangle \\
&=\phi(vw_1(a))\fu(w_2) =\phi(v \fu(w_2)w_1(a))\\
&=\phi(v \fu(w) a) \qquad\qquad\qquad {\rm{ by}}\quad \eqref{E2.6.1}\\
&=\phi (va) \fu(w),
\end{aligned}
$$
while
\[
\langle  (1 \# a)(\fu \otimes \phi), v\# w\rangle =
\langle (\fu \otimes a\phi), v \# w \rangle =
(a\phi)(v) \fu(w) = \phi(va) \fu(w).
\]
This completes the proof.
\end{proof}

The next lemma concerns automorphisms of $A\# H$ that are
determined by automorphisms of $A$ and $H$.
\begin{lemma}
\label{xxlem2.8}
Let $A$ be a left $H$-module algebra.
Suppose that $\mu\in \Aut(A)$ and $\phi\in \Aut(H)$. Define a $k$-linear map
$\mu\# \phi: A\# H\to A\# H$ by
$$(\mu\# \phi)(a\# h)=\mu(a)\# \phi(h)$$
for all $a\in A$ and all $h\in H$, and define 
$\phi\# \mu: H\# A\to H\# A$ similarly. Then the following hold.
\begin{enumerate}
\item
$\mu\# \phi$ is an algebra automorphism of $A\# H$ if and only if
$\phi\# \mu$ is an algebra automorphism of $H\# A$.
\item
$\mu\# \phi$ is an algebra automorphism of $A\# H$ if and only if
\begin{equation}
\label{E2.8.1}\tag{E2.8.1} \phi(h)_1(\mu(b))\# \phi(h)_2=\mu(h_1(b))
\# \phi(h_2)
\end{equation}
for all $b\in A$ and all $h\in H$.
\item
If $\phi=S^{2n}\circ \Xi^l_{\eta}\circ \Xi^r_{\gamma}$ for some algebra
homomorphisms $\eta, \gamma: H\to k$, then \eqref{E2.8.1} holds
if and only if
\begin{equation}
\label{E2.8.2}\tag{E2.8.2}
(\Xi^l_{\eta}\circ S^{2n})(h)(\mu(b))=\mu(\Xi^r_{\eta}(h)(b))
\end{equation}
for all $h \in H$ and $b \in A$.
\end{enumerate}
\end{lemma}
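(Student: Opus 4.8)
The plan is to treat the three parts in the order (2), (1), (3), since the computation behind (2) is the most direct and the other two reuse the same bookkeeping. For part (2), I would simply compare the two sides of the multiplicativity identity for $\mu\#\phi$. As $\mu$ and $\phi$ are bijective and $(\mu\#\phi)(1_A\#1_H)=1_A\#1_H$, the map $\mu\#\phi$ is an algebra automorphism exactly when it is multiplicative. Expanding the product in $A\#H$ gives $(\mu\#\phi)\big((a\#g)(b\#h)\big)=\sum\mu(a)\mu(g_1(b))\#\phi(g_2)\phi(h)$, whereas $(\mu\#\phi)(a\#g)\,(\mu\#\phi)(b\#h)=\sum\mu(a)\,\phi(g)_1(\mu(b))\#\phi(g)_2\phi(h)$. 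Both sides carry $\mu(a)$ on the far left and $\phi(h)$ on the far right, so equality for all $a,b,g,h$ is equivalent to the special case $a=1_A$, $h=1_H$, which is precisely \eqref{E2.8.1}; the general case is recovered by left multiplication by $\mu(a)\#1$ and right multiplication by $1\#\phi(h)$, using the module-algebra axioms. This settles (2).

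For part (1), the clean route is conjugation by the isomorphism $\Psi\colon A\#H\to H\#A$ of Lemma~\ref{xxlem2.1}. I would resist trying to show that $\Psi$ intertwines $\mu\#\phi$ and $\phi\#\mu$ outright, since this in fact fails in general. Instead, since $\Psi$ is an algebra isomorphism, $\mu\#\phi$ is an automorphism of $A\#H$ if and only if $\Theta:=\Psi\circ(\mu\#\phi)\circ\Psi^{-1}$ is an automorphism of $H\#A$. A direct computation using only the module-algebra axioms (and not the multiplicativity of $\mu\#\phi$) shows $\Psi^{-1}(h\#1)=1\#h$ and $\Psi^{-1}(1\#a)=a\#1$, from which $\Theta(h\#1)=\phi(h)\#1$ and $\Theta(1\#a)=1\#\mu(a)$. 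Now if $\mu\#\phi$ is an automorphism, then $\Theta$ is a homomorphism, hence multiplicative, so $\Theta(h\#a)=\Theta(h\#1)\,\Theta(1\#a)=(\phi(h)\#1)(1\#\mu(a))=\phi(h)\#\mu(a)=(\phi\#\mu)(h\#a)$; thus $\Theta=\phi\#\mu$, which is therefore an automorphism. The converse is identical with $\Psi$ replaced by $\Psi^{-1}$. The conceptual point to flag here is that the argument only evaluates $\Theta$ on the two generating subalgebras and then invokes multiplicativity, rather than chasing coproducts.

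For part (3) it suffices to prove \eqref{E2.8.1} $\Leftrightarrow$ \eqref{E2.8.2} when $\phi=S^{2n}\circ\Xi^l_\eta\circ\Xi^r_\gamma$, and the key preliminary is the coproduct of $\phi$. From the standard identities $\Delta\circ\Xi^r_\gamma=(\mathrm{id}\otimes\Xi^r_\gamma)\circ\Delta$, $\Delta\circ\Xi^l_\eta=(\Xi^l_\eta\otimes\mathrm{id})\circ\Delta$, and $\Delta\circ S^{2n}=(S^{2n}\otimes S^{2n})\circ\Delta$ — together with the facts, noted in the text, that winding automorphisms commute with $S^2$ and with one another — I get $\Delta\circ\phi=(S^{2n}\Xi^l_\eta\otimes S^{2n}\Xi^r_\gamma)\circ\Delta$. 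Substituting this into \eqref{E2.8.1} rewrites it as $\sum (S^{2n}\Xi^l_\eta)(h_1)(\mu(b))\otimes(S^{2n}\Xi^r_\gamma)(h_2)=\sum\mu(h_1(b))\otimes\phi(h_2)$. Applying $\mathrm{id}_A\otimes\big(\epsilon\circ(S^{2n}\Xi^r_\gamma)^{-1}\big)$ to the $H$-slot collapses the left side to $(S^{2n}\Xi^l_\eta)(h)(\mu(b))$ and, because $(S^{2n}\Xi^r_\gamma)^{-1}\circ\phi=\Xi^l_\eta$ and $\epsilon\circ\Xi^l_\eta=\eta$, collapses the right side to $\sum\mu(h_1(b))\eta(h_2)=\mu(\Xi^r_\eta(h)(b))$, which is \eqref{E2.8.2}. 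For the converse I would substitute \eqref{E2.8.2} with $h$ replaced by $h_1$ into the rewritten left side and expand both sides with a fourfold coproduct $h_1\otimes h_2\otimes h_3\otimes h_4$; each reduces to $\sum\mu(h_1(b))\eta(h_2)\gamma(h_4)\otimes S^{2n}(h_3)$.

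I expect the main obstacle to be the coproduct-and-antipode bookkeeping in part (3): one must choose the right functional to apply to the $H$-slot in the forward direction, and in the converse carefully track which winding operator lands on which tensor leg once the coproduct is iterated. By contrast, parts (1) and (2) are essentially formal once the correct organizing idea — conjugation by $\Psi$ evaluated on generators for (1), and stripping the outer factors for (2) — is in place.
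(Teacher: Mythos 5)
Your proposal is correct and follows essentially the same route as the paper's proof: part (2) by comparing the two expansions of multiplicativity and stripping the outer factors $\mu(a)$ and $\phi(\cdot)$, part (1) by conjugating with the isomorphism $\Psi$ of Lemma~\ref{xxlem2.1} and evaluating on the generating subalgebras $H\#1$ and $1\#A$, and part (3) by computing $\Delta\circ\phi$, rewriting \eqref{E2.8.1}, and collapsing the $H$-leg with the inverse right winding automorphism followed by $\epsilon$. The only differences are cosmetic (order of the parts, packaging the collapsing functional as $\epsilon\circ(S^{2n}\Xi^r_\gamma)^{-1}$ rather than $\Xi^r_{\gamma\circ S}$ then $\epsilon$, and a more explicit converse in (3)), so no further commentary is needed.
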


\begin{proof} (a) Recall from Lemma \ref{xxlem2.1} that the algebra isomorphisms
$\Psi: A\# H\to H\# A$ and $\Psi^{-1}: H\# A\to A\# H$ are defined
by $\Psi(a\# h)=h_2\# a^{h_1}$ and by $\Psi^{-1}(g\#b)=g_1(b)\#
g_2$.   Suppose that $\mu \# \phi$ is an automorphism of $H \#A$.  Then 
$\Psi^{-1} (\mu\# \phi) \Psi$ is an automorphism of $A \#H$, and it is easy to see 
that $\phi\#\mu=\Psi^{-1} (\mu\# \phi) \Psi$, 
since this formula holds for elements of $A$ and elements of $H$.   The converse is similar.
\begin{comment}
Suppose that $\mu\# \phi$ is an algebra automorphism of $A\#
H$. Then the assertion follows from the following claim: $\phi\#
\mu=\Psi (\mu\# \phi) \Psi^{-1}$. For $h\# 1$ and $1\# a$ in $H\#
A$, we have
$$\Psi (\mu\# \phi) \Psi^{-1}(h\# 1)=\Psi(\mu\#\phi)(1\# h)=
\Psi(1\# \phi(h))=\phi(h)\# 1$$
and
$$\Psi (\mu\# \phi) \Psi^{-1}(1\# a)=\Psi(\mu\#\phi)(a\# 1)=
\Psi(\mu(a)\# 1)=1\# \mu(a).$$
Since the algebra $H\# A$ is generated by $H\# 1$ and $1\# A$, the
claim follows.
\end{comment}

(b) By definition, for all $a\# h, b\# g\in A\# H$,
$$\begin{aligned}
(\mu\# \phi)(a\# h)(\mu\# \phi)(b\# g)
&=(\mu(a)\# \phi(h))(\mu(b)\# \phi(g))\\
&=\mu(a)\phi(h)_1(\mu(b))\#\phi(h)_2\phi(g)
\end{aligned}$$
and
$$\begin{aligned}
(\mu\# \phi)((a\# h)(b\# g))
&=(\mu\# \phi)(ah_1(b)\# h_2g)\\
&=\mu(ah_1(b))\# \phi(h_2g)= \mu(a)\mu(h_1(b))\# \phi(h_2)\phi(g).
\end{aligned}
$$
The assertion follows by comparing these two equations.

(c) By definition,
$$\phi(h)=\eta(h_1) S^{2n}(h_2) \gamma(h_3)$$
for all $h\in H$. Since $S^{2n}$ is a Hopf algebra automorphism of $H$,
we have
$$
\Delta(\phi(h))=\eta(h_1) S^{2n}(h_2)\otimes S^{2n}(h_3) \gamma(h_4)
$$
and
$$h_1\otimes \phi(h_2)=h_1\otimes \eta(h_2)S^{2n}(h_3)\gamma(h_4)
=\Xi^r_{\eta}(h_1)\otimes S^{2n}(h_2)\gamma(h_3).$$ Then \eqref{E2.8.1}
in this case is the condition
$$
(\Xi^l_{\eta}\circ S^{2n})(h_1)(\mu(b))\# S^{2n}(h_2)\gamma(h_3)
=\mu(\Xi^r_{\eta}(h_1)(b))\# S^{2n}(h_2)\gamma(h_3)
$$
for all $h \in H$ and $b \in A$.  Clearly then \eqref{E2.8.2} implies \eqref{E2.8.1}.  
Conversely, if \eqref{E2.8.1} holds, then applying the 
winding automorphism $\Xi^r_{\gamma \circ S}$ to both sides of the previous displayed equation and 
then applying $1 \# \epsilon$ gives \eqref{E2.8.2}.  
\end{proof}

\begin{remark}
\label{xxrem2.9}
Recall from \cite{BZ} that the Nakayama automorphism of a noetherian AS Gorenstein Hopf algebra $H$ has the form 
$\phi=S^{-2}\circ \Xi^r_{\gamma}$.   Using this automorphism of $H$ in the previous result, \eqref{E2.8.2} becomes
$$S^{-2}(h) (\mu(b))=\mu(h(b))$$
for all $b\in A$ and $h\in H$.  This formula is related to \eqref{E3.10.2} 
and Lemma \ref{xxlem5.3}(a) below.
\end{remark}

We conclude this section with a result that will allow us to 
better understand the bimodule
$(M \# H)^*$ constructed in Proposition~\ref{xxpro2.7}, in the special
case that $M^* \cong {}^{\mu} A ^1$ for an automorphism $\mu$.
\begin{lemma}
\label{xxlem2.10} Let $A$ be a left $H$-module algebra and let  $B=A\# H$.  Let 
$\mu: A \to A$ be an algebra automorphism of
$A$ and suppose that the $A$-bimodule $N = {}^{\mu} A ^1$ has a
left $H$-action making it a $H_{S^i}$-equivariant $A$-bimodule for some
even integer $i$.  Make $N$ a right $H$-module as in \eqref{E2.4.1}.  Suppose
further that $N$ has a left and right $A$-module generator $\fe$ which is
$H$-stable in the sense that $h(\fe)  \subseteq k\fe$ for all $h \in H$,
and define $\eta: H \to k$ by $h(\fe) = \eta(h) \fe$. Suppose that 
$\sigma: H \to H$ is an automorphism  satisfying \eqref{E2.5.2}, and
let $\fu$ be a left and right generator for $^1 H ^{\sigma}$, so that
$\fu h = \sigma(h) \fu$.
\begin{enumerate}
\item
The element $\fu \# \fe$ is a left and right $B$-module generator of
$^1 H ^{\sigma} \# {}^{\mu} A ^1$, where this is the $B$-bimodule constructed in 
Lemma~\ref{xxlem2.5}(b).
\item
We have $^1 H ^{\sigma} \# {}^{\mu} A ^1 \cong {}^{\rho}
B ^1$ as  $B$-bimodules, where the automorphism $\rho$ of $B$ has the
following  formula:
\[
\rho(a \# h)=  \mu(a) \# \Xi^l_{\eta} \circ \sigma^{-1}(h).
\]
\end{enumerate}
\end{lemma}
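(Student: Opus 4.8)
The plan is to exhibit an explicit bimodule isomorphism ${}^{\rho}B^1\cong L$, where $L:={}^1H^\sigma\#{}^{\mu}A^1$ is the bimodule of Lemma~\ref{xxlem2.5}(b), by transporting the left $B$-action of $L$ along right multiplication by the element $\fu\#\fe$. The computations rest on two facts: $\fu$ is a unit in $H$ with $\fu h=\sigma(h)\fu$, and $\fe$ is a unit in $A$ which is $H$-stable, so that by \eqref{E2.4.1} the right $H$-action satisfies $\fe^{h}=S^{-i-1}(h)(\fe)=(\eta\circ S^{-i-1})(h)\fe$. For part (a) I would compute the left and right actions of a general element $g\#a\in H\#A\cong B$ on $\fu\#\fe$ directly from the formulas of Lemma~\ref{xxlem2.5}(b):
\[
(g\#a)(\fu\#\fe)=g\fu\#(a\cdot\fe),\qquad (\fu\#\fe)(g\#a)=\fu\,\Xi^l_{\eta\circ S^{-i-1}}(g)\#\fe a.
\]
After identifying $L\cong H\otimes A$ as a $k$-space, each of these is the tensor product of two $k$-linear bijections (using that $\fu$ and $\fe$ are units, that $\mu$ is an automorphism, and that the winding automorphism $\Xi^l_{\eta\circ S^{-i-1}}$ is invertible). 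Hence $\fu\#\fe$ is a free generator of $L$ on each side, which proves part (a).

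For part (b), the bijectivity of right multiplication just established shows that $\Phi:B\to L$, $c\mapsto(\fu\#\fe)c$, is an isomorphism of right $B$-modules. Transporting the left $B$-module structure of $L$ through $\Phi$ endows $B$ with a second left action, necessarily of the form $b\ast c=\rho(b)\,c$ for a unique $k$-linear map $\rho:B\to B$ with $\rho(b)=\Phi^{-1}\bigl(b\,(\fu\#\fe)\bigr)$. Associativity of the $B$-bimodule $L$ forces $\rho$ to be a unital algebra homomorphism, and since $L$ is free of rank one as a left $B$-module as well (part (a)), $\rho$ is bijective; hence $\rho\in\Aut(B)$ and $\Phi$ is an isomorphism ${}^{\rho}B^1\cong L$ of $B$-bimodules. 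Everything is thereby reduced to computing $\rho$ from the defining relation $b\,(\fu\#\fe)=(\fu\#\fe)\,\rho(b)$ on the generating subalgebras $A$ and $H$ of $B$, assembling the answer using multiplicativity of $\rho$ and the identity $a\#h=(a\#1)(1\#h)$.

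On $A$ the relation is immediate: $a\,(\fu\#\fe)=\fu\#(a\cdot\fe)=\fu\#\fe\mu(a)=(\fu\#\fe)(1\#\mu(a))$, whence $\rho(1\#a)=1\#\mu(a)$ in $H\#A$ form, i.e.\ $\rho|_A=\mu$ (for the canonical generator; a different choice of $\fe$ changes $\rho|_A$ only by an inner automorphism, the ambiguity inherent in the Nakayama automorphism by Lemma~\ref{xxlem1.10}). The essential computation is on $H$. Here $h\,(\fu\#\fe)=h\fu\#\fe$, while writing $\rho(1\#h)=g\#1$ in $H\#A$ form and using the right-action formula from part (a) together with $\fu w=\sigma(w)\fu$ gives $(\fu\#\fe)(g\#1)=\sigma\bigl(\Xi^l_{\eta\circ S^{-i-1}}(g)\bigr)\fu\#\fe$. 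Since $\fu$ is a unit, matching the two expressions forces $\sigma\circ\Xi^l_{\eta\circ S^{-i-1}}(g)=h$, and solving,
\[
g=\bigl(\Xi^l_{\eta\circ S^{-i-1}}\bigr)^{-1}\!\circ\sigma^{-1}(h)=\Xi^l_{\eta\circ S^{-i}}\circ\sigma^{-1}(h)=\Xi^l_\eta\circ\sigma^{-1}(h),
\]
where I use that the inverse of a left winding automorphism $\Xi^l_f$ is $\Xi^l_{f\circ S}$ (as $f$ and $f\circ S$ are convolution inverses in $H^*$) and that $\eta\circ S^{-i}=\eta$ because $i$ is even, by \eqref{E1.2.2}. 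Converting $\rho(1\#h)=1\#\Xi^l_\eta\sigma^{-1}(h)$ back to $A\#H$ form and combining with $\rho|_A=\mu$ yields $\rho(a\#h)=\mu(a)\#\Xi^l_\eta\circ\sigma^{-1}(h)$, as claimed.

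The one genuine subtlety to watch is the bookkeeping between the two smash presentations $A\#H$ and $H\#A$—the value of $\rho$ is read off most naturally in $H\#A$ form and must be rewritten in the $A\#H$ form of the statement—together with the several powers of $S$ that appear. The crux is the winding-automorphism cancellation on $H$: one must see that $(\Xi^l_{\eta\circ S^{-i-1}})^{-1}=\Xi^l_{\eta\circ S^{-i}}$ collapses to $\Xi^l_\eta$ precisely because $i$ is even, so that the stray factor $S^{-i-1}$ produced by the right $H$-action on the $H$-stable generator $\fe$ disappears and leaves the clean automorphism $\Xi^l_\eta\circ\sigma^{-1}$.
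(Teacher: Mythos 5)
Your proof is correct and follows essentially the same route as the paper's: compute the left and right $B$-actions on the canonical generator $\fu \# \fe$ in the $H \# A$ presentation, deduce rank-one freeness on both sides, and read off $\rho$ on the generating subalgebras $A$ and $H$ before converting back to $A \# H$ form via $a\#h=(a\#1)(1\#h)$. The only cosmetic differences are that you re-derive the transport-of-structure step that the paper obtains by citing Lemma~\ref{xxlem1.10}, and that you carry the exponent $S^{-i-1}$ explicitly until the evenness of $i$ collapses the winding automorphism to $\Xi^l_\eta$, which is exactly the content of the paper's identity (E2.10.2).
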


\begin{proof} 
(a) We work with the right smash product $H \# A$ in the proof, since the
formulas for the $B$-bimodule structure on $^1 H ^{\sigma} \# {}^{\mu} A ^1$ are given 
in terms of it. 
%Note that by Lemma~\ref{xxlem2.8}(a), when working with 
%automorphisms of the form $\mu\# \phi$ it makes no difference if we work with the right or left smash product.

It is straightforward to check the identity
\begin{align*}
\label{E2.10.2}\tag{E2.10.2}
(\fu \# \fe)(g\# 1) &= \fu g_2 \# \fe^{g_1} = \fu g_2 \# S^{-i-1}(g_1)(\fe)
\\
&= \fu g_2 \# \eta(S^{-i-1}(g_1)) \fe = \fu g_2  \eta(S^{-i-1}(g_1)) \# \fe
\\
&= \fu \Xi^l_{\eta \circ S}(g) \# \fe \qquad\qquad \qquad \qquad \qquad
\text{using}\ \eqref{E1.2.2}.
\end{align*}
Also, $(\fu \# \fe)(1 \# a) = (\fu \# \fe a)$ and $(g \# a)(\fu \# \fe)
= (g \fu \# a \fe)$, so it follows that $\fu \#  \fe$ is a left and right
$B$-module generator.  The same formulas easily imply that no nonzero
element of $B$ kills $\fu \#  \fe$ on either side, so $^1 H ^{\sigma}
\# {}^{\mu} A ^1$ is a free $B$-module of rank $1$ on each side.  Then this bimodule 
is isomorphic to ${}^{\rho} B ^1$ for some automorphism
$\rho: B \to B$, by Lemma~\ref{xxlem1.10}.

(b)  Since $\Xi^l_{\eta \circ S} = (\Xi^l_{\eta})^{-1}$, we calculate for any $h\in H$ that 
$$\begin{aligned}
(h\# 1)(\fu \# \fe)
&= h \fu \# \fe
\\
&=\fu \sigma^{-1}(h) \# \fe  \\
&=(\fu \# \fe)(\Xi^l_{\eta}\circ \sigma^{-1}(h) \# 1)
\quad \quad {\rm{ by}}\quad \eqref{E2.10.2}.
\end{aligned}
$$
This shows that $\rho(h \#1 )=\Xi^l_{\eta}\circ \sigma^{-1}(h) \# 1$.

For any $a\in A$,
$$
(1\# a)(\fu \# \fe)= \fu \# a\fe =\fu \# \fe \mu(a)
=(\fu \#  \fe)(1 \# \mu(a))\qquad
$$
Hence $\rho(1 \# a)= 1 \# \mu(a)$.

Thus we know how $\rho$ acts on elements of $A$ and in $H$, and so $\rho = \Xi^l_{\eta} \circ \sigma^{-1} \# \mu$ 
as an automorphism of $H \# A$.  By Lemma~\ref{xxlem2.8}(a) and its proof, we see that 
as an automorphism of the left smash product $A \# H$, we have the formula $\rho = \mu \#  \Xi^l_{\eta} \circ \sigma^{-1}$ as required.
\end{proof}

\section{AS Gorenstein algebras and local cohomology}
\label{xxsec3}

In this section, we introduce the main technical tool of our approach in this paper, 
which is the local cohomology of graded algebras.  We also define a generalization of the AS Gorenstein condition to not necessarily connected graded algebras, and discuss the homological determinant in this setting.

Let $A$ be a locally finite ${\mathbb N}$-graded algebra and $\fm_A$
be the graded ideal $A_{\geq 1}$.   Let $A-\GrMod$ denote the category of $\mb{Z}$-graded left $A$-modules.  Similarly, if $A$ and $C$ are graded algebras, then $(A, C)-\GrMod$ is the 
category of $\mb{Z}$-graded $(A, C)$-bimodules.
For each $n$ and each graded left
$A$-module $M$, we define
$$\Gamma_{\fm_A}(M)= \{x\in M\mid A_{\geq n} x=0
{\text{ for some $n \geq 1$}}\}
= \lim_{n \rightarrow \infty} \Hom_A(A/A_{\geq n}, M)$$
and call this the \emph{$\fm_A$-torsion} submodule of $M$.  It is standard 
that the functor $\Gamma_{\fm_A}$ is a left exact functor from $A-\GrMod$ to itself.
Since this category has enough injectives, the right derived functors 
$R^i\Gamma_{\fm_A}$ are defined and called the local cohomology functors.  
Explicitly, one has $R^i\Gamma_{\fm_A}(M) = \lim_{n \rightarrow \infty} \Ext_A^i(A/A_{\geq n}, M)$.
See \cite{AZ} for more background. 

We also consider ${\mathbb Z}^w$-graded algebras for a positive integer $w$.
In the ${\mathbb Z}^w$-graded setting, the
degree of any homogeneous element $a$ is denoted by $\mid a\mid=
(n_1,\cdots,n_w)\in {\mathbb Z}^w$. Define a new
${\mathbb Z}$-grading by $\mid\mid a\mid\mid =\sum_{s=1}^w n_s$.
In most cases, we will assume that $A$ is noetherian and
a locally finite ${\mathbb N}$-graded algebra with respect to the
$\mid\mid\;\; \mid\mid$-grading.   But in a few occasions, we consider
more general ${\mathbb Z}^w$-gradings (for example, in 
the discussion of Frobenius algebras at the end of this section).
The local cohomology functors for  ${\mathbb Z}^w$-graded algebras $A$ are defined 
using the $\mid\mid\;\; \mid\mid$-grading and the same torsion functor $\Gamma_{\fm_A}$.  
This is an endofunctor of the category of ${\mathbb Z}^w$-graded modules and thus the local 
cohomology modules are also in this category.  There is a forgetful functor from the
category of ${\mathbb Z}^w$-graded $A$-modules to the category of
${\mathbb Z}$-graded $A$-modules. It is easy to check that the local
cohomological functors $\Gamma_{\fm_A}$ and the derived functors
$R^d\Gamma_{\fm_A}$ commute with the forgetful functor, so we
use the same notation for local cohomological functors in either
graded category.

%When a Hopf algebra $H$ acts on $A$ we always assume that each
%homogeneous component of $A$ is a left $H$-module. For example,
%if a group acts on $A$, then each element in the group preserves
%the ${\mathbb Z}^w$-grading.  We say that $A$ is a \emph{graded
%left $H$-module algebra} in this situation.

%Throughout this section, $A$ will be a locally finite $\mb{N}$-graded left
%$H$-module algebra, and 
%As described in the previous section, this means that $H$ 
%preserves the grading on $A$.  Let $B = A \# H$, which is again graded if we take 
%the elements of $H$ to have degree $0$.   We will
%assume $H$ is finite-dimensional if we need that $B$ is again locally
%finite.  However, many of the results in this section make sense for
%any $H$. 

In the next two lemmas we consider the local cohomology of (bi)-modules over a smash product.
\begin{lemma}
\label{xxlem3.1} Let $A$ be a locally finite $\mb{N}$-graded left $H$-module algebra 
for a finite-dimensional Hopf algebra $H$, and let $B = A \# H$.   
Let $i \geq 0$ be an integer and let $C$ be another graded algebra.
Then as endofunctors of
$(B,C)$-$\GrMod$,
$$R^i\Gamma_{\fm_A}(-)\cong R^i\Gamma_{\fm_B}(-).$$
\end{lemma}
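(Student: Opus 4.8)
The plan is to reduce both local cohomology functors to one and the same complex. Write $B = A \# H = A \otimes H$; since each $A_i$ is an $H$-submodule, the copy of $H$ lies in degree $0$, and $B$ is free of rank $\dim_k H$ as a left $A$-module and (via the isomorphism $A \# H \cong H \# A$ of Lemma~\ref{xxlem2.1}) also as a right $A$-module. In particular $B$ is flat on both sides over $A$. The key structural identity is $B_{\geq n} = A_{\geq n} B = B A_{\geq n}$ for every $n \geq 1$: the inclusion $A_{\geq n} B \subseteq B_{\geq n}$ is clear, and conversely, for $a \in A_m$ with $m \geq n$ and $h \in H$ one uses \eqref{E2.0.1} to write $a \# h = \sum (1 \# h_2)(S^{-1}(h_1)(a) \# 1)$, where $S^{-1}(h_1)(a) \in A_m \subseteq A_{\geq n}$ because the $H$-action preserves the grading; this shows $B_{\geq n} \subseteq B A_{\geq n}$, and the other side is symmetric. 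Consequently the $\fm_A$-torsion and the $\fm_B$-torsion of any $(B,C)$-bimodule coincide, so $\Gamma_{\fm_A} = \Gamma_{\fm_B}$ as endofunctors of $(B,C)$-$\GrMod$.

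Next I would compare the derived functors by computing both on a single resolution. Viewing $(B,C)$-bimodules as graded left $B \otimes C\op$-modules, the ring $B \otimes C\op$ is flat (indeed free) as a right module over $A$ and over $B$, where these rings act through the left tensor factor. Hence the functors $(B \otimes C\op) \otimes_A -$ and $(B \otimes C\op) \otimes_B -$ are exact and are left adjoint to the restriction functors $(B,C)$-$\GrMod \to A$-$\GrMod$ and $(B,C)$-$\GrMod \to B$-$\GrMod$. Since an exact left adjoint forces its right adjoint to preserve injectives, both restriction functors carry injectives to injectives. Therefore, if $M \to I^{\bullet}$ is an injective resolution of a $(B,C)$-bimodule $M$ in $(B,C)$-$\GrMod$, then $I^{\bullet}$ restricts to an injective resolution of $M$ both over $A$ and over $B$. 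Applying the single functor $\Gamma_{\fm_A} = \Gamma_{\fm_B}$ to $I^{\bullet}$ and taking cohomology then computes $R^i\Gamma_{\fm_A}(M)$ and $R^i\Gamma_{\fm_B}(M)$ by the very same complex of $(B,C)$-bimodules, yielding the desired natural isomorphism (indeed equality), with matching $(B,C)$-structure.

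Alternatively, one can argue through the explicit formula $R^i\Gamma_{\fm_B}(M) = \varinjlim_n \Ext^i_B(B/B_{\geq n}, M)$. From the identity above, $B/B_{\geq n} \cong B \otimes_A (A/A_{\geq n})$ as graded left $B$-modules, and flatness of $B$ as a right $A$-module gives the change-of-rings isomorphism $\Ext^i_B(B \otimes_A N, M) \cong \Ext^i_A(N, M)$, natural in $N \in A$-$\GrMod$ and $M \in (B,C)$-$\GrMod$; taking $N = A/A_{\geq n}$ and passing to the direct limit over $n$ recovers $R^i\Gamma_{\fm_A}(M)$. I expect the main point requiring care -- and the only genuinely non-formal step -- to be the identity $B_{\geq n} = A_{\geq n} B$, which is what makes the two filtrations cofinal and is exactly where the hypotheses that $H$ sits in degree $0$ and acts preserving the grading enter (through \eqref{E2.0.1}). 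Beyond that, in the change-of-rings approach one must still check that the isomorphisms respect the right $C$-action, the left $B$-action coming from the two-sided structure of $B/B_{\geq n}$, and the transition maps of the directed systems; the single-resolution argument of the previous paragraph avoids these compatibility verifications and is the cleaner route to the statement about $(B,C)$-bimodule-valued functors.
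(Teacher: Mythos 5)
Your proposal is correct and follows essentially the same route as the paper's proof: you show $\Gamma_{\fm_A} = \Gamma_{\fm_B}$ on $(B,C)$-bimodules using \eqref{E2.0.1}, and then compute both derived functors on a single injective resolution in $(B,C)$-$\GrMod$, whose terms are injective over both $A$ and $B$ because $B \otimes_k C\op$ is free (hence flat) over each. Your explicit identity $B_{\geq n} = A_{\geq n}B = BA_{\geq n}$ and the adjunction argument for restriction preserving injectives simply spell out steps the paper handles by citation, and the change-of-rings alternative is a nice but inessential addition.
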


\begin{proof} Note first that $B$ is a flat right $A$-module.  In fact, it is obvious 
from thinking of $B$ as a right smash product $H \# A$, as in Lemma~\ref{xxlem2.1}, that $B$ is a free right $A$-module.  Then every (graded) injective left $B$-module is 
a (graded) injective left $A$-module; see \cite[Lemma~3.5]{La}, or see \cite[Lemma 5.1]{KKZ} for a graded version.  

Given a graded $(B,C)$-bimodule $M$, consider 
a graded injective $(B,C)$-bimodule resolution $I^{\bullet}$ of $M$.  In other words, we take an  
injective resolution of $M$ in the category $(B \otimes_k C^{op})-\GrMod$,
where $B \otimes_k C^{op}$ is also a free right $B$-module and hence the left modules $I^i$ 
are injective in both $B-\GrMod$ and $A-\GrMod$.  Thus we can use $I^{\bullet}$ to calculate either functor.  It is easy to check using
the formula \eqref{E2.0.1} that for any left $B$-module $M$,
$\Gamma_{\mf{m}_A}(M)$ is a $\fm_B$-torsion $B$-submodule of $M$. It
easily follows that for any $(B,C)$-bimodule $M$, we have
$\Gamma_{\mf{m}_A}(M) = \Gamma_{\mf{m}_B}(M)$, and that these are
$(B, C)$-sub-bimodules of $M$. This implies that $\Gamma_{\mf{m}_A}
( - ) = \Gamma_{\mf{m}_B}( - )$ as endofunctors of $(B,C)$-$\GrMod$.
The assertion easily follows by applying these functors to $I^{\bullet}$ and taking
homology.
\end{proof}

\begin{lemma}
\label{xxlem3.2}  Let $A$ be a locally finite $\mb{N}$-graded left $H$-module algebra, and let $B= A \# H$.
Suppose that $M$ is a graded $H$-equivariant $A$-bimodule.
\begin{enumerate}
\item
For any integer $d \geq 0$, $R^d\Gamma_{\fm_A}(M)$ has a natural
left $H$-action and is an $H$-equivariant $A$-bimodule.
\item
Assume $H$ is finite dimensional.  Then as $B$-bimodules, 
$$R^d\Gamma_{\fm_B}(M \#H) \cong R^d\Gamma_{\fm_A}(M\# H)
\cong R^d\Gamma_{\fm_A}(M)\# H,$$
where the bimodule structure of $R^d\Gamma_{\fm_A}(M)\# H$ is as in 
 Lemma~\ref{xxlem2.5}(a).
\end{enumerate}
\end{lemma}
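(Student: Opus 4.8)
The plan is to prove both parts from a single, carefully chosen injective resolution of $M$, and then to read off the two statements from it. First I would record the structural translation: by Lemma~\ref{xxlem2.4} (with $i=0$), declaring the right $H$-action $m^h = S^{-1}(h)(m)$ turns the $H$-equivariant $A$-bimodule $M$ into a genuine $B$-bimodule and conversely, so the category of graded $H$-equivariant $A$-bimodules coincides with the category of graded $B$-bimodules, i.e.\ graded left $B^e$-modules. The preliminary observation that makes everything work is that $B = A\#H$ is free as a right $A$-module, which is transparent after identifying $B$ with the right smash product $H\#A$ via Lemma~\ref{xxlem2.1}. Consequently $B^e = B\otimes B^{op}$ is free as a right $A$-module for the left embedding $A\hookrightarrow B\hookrightarrow B^e$, so every graded injective $B$-bimodule restricts to a graded injective left $A$-module (the graded analogue of \cite[Lemma 3.5]{La} used in the proof of Lemma~\ref{xxlem3.1}; cf.\ \cite[Lemma 5.1]{KKZ}). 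Hence I may fix a graded injective resolution $M \to I^{\bullet}$ in graded $B$-bimodules whose terms $I^j$ are simultaneously injective as left $A$-modules, so that $R^d\Gamma_{\fm_A}(M) = H^d(\Gamma_{\fm_A}(I^\bullet))$.

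For part (a) the key point is that $\Gamma_{\fm_A}$ carries graded $B$-bimodules to graded $B$-bimodules. Indeed, for a $B$-bimodule $N$ and $x\in\Gamma_{\fm_A}(N)$ annihilated by $A_{\geq n}$, formula \eqref{E2.0.1} gives $(a\#h)\cdot x = \sum h_2\cdot\big(S^{-1}(h_1)(a)\cdot x\big)$, and since the $H$-action preserves the grading we have $S^{-1}(h_1)(a)\in A_{\geq n}$ whenever $a\in A_{\geq n}$; thus $h\cdot x$ is again annihilated by $A_{\geq n}$, so $\Gamma_{\fm_A}(N)$ is an $H$-stable $A$-sub-bimodule, hence an $H$-equivariant $A$-bimodule. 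Note this uses only the bijectivity of the antipode and not the finiteness of $H$, which is exactly why part (a) needs no finiteness hypothesis. Applying this to each $I^j$ and passing to cohomology equips $R^d\Gamma_{\fm_A}(M) = H^d(\Gamma_{\fm_A}(I^\bullet))$ with a natural $H$-equivariant $A$-bimodule structure, proving (a).

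For part (b) the first isomorphism $R^d\Gamma_{\fm_B}(M\#H)\cong R^d\Gamma_{\fm_A}(M\#H)$ is immediate from Lemma~\ref{xxlem3.1} applied to the $(B,B)$-bimodule $M\#H$ (take $C=B$). For the second isomorphism I would exploit the functor $(-)\#H$ of Lemma~\ref{xxlem2.5}(a) (with $\sigma=1$) from $H$-equivariant $A$-bimodules to $B$-bimodules, whose underlying operation is $-\otimes_k H$ and is therefore exact. Applying it termwise to $I^\bullet$ produces a complex $I^\bullet\#H$ resolving $M\#H$ whose terms, being $\cong (I^j)^{\oplus \dim_k H}$ as left $A$-modules, are again injective left $A$-modules (here the finite-dimensionality of $H$ enters), so $R^d\Gamma_{\fm_A}(M\#H) = H^d(\Gamma_{\fm_A}(I^\bullet\#H))$. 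The heart of the matter is the natural identification $\Gamma_{\fm_A}(N\#H) = \Gamma_{\fm_A}(N)\#H$ of $B$-bimodules: since the left $A$-action on $N\#H$ is $a(n\#h)=an\#h$, an element is $\fm_A$-torsion precisely when each of its $N$-components lies in $\Gamma_{\fm_A}(N)$, and functoriality of $(-)\#H$ makes the $B$-bimodule structures match. Commuting the exact functor $(-)\#H$ past cohomology then yields
\[
R^d\Gamma_{\fm_A}(M\#H) = H^d\big(\Gamma_{\fm_A}(I^\bullet)\#H\big) = H^d\big(\Gamma_{\fm_A}(I^\bullet)\big)\#H = R^d\Gamma_{\fm_A}(M)\#H,
\]
as required.

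I expect the main obstacle to be the bookkeeping in the preliminary step---verifying that an injective resolution in graded $B$-bimodules can be chosen with left-$A$-injective terms---together with checking that the $B$-bimodule structure induced on $R^d\Gamma_{\fm_A}(M)\#H$ through this resolution genuinely agrees with the structure defined in Lemma~\ref{xxlem2.5}(a). Both of these reduce to the freeness of $B$ over $A$ and to the naturality of the $(-)\#H$ construction, which is precisely why I would isolate those two facts before starting the homological computation.
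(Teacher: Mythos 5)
Your argument breaks down at the opening ``structural translation,'' and the error propagates through everything that follows. Lemma~\ref{xxlem2.4} does \emph{not} say that an $H$-equivariant $A$-bimodule is a $B$-bimodule: it says that $M$ becomes a left $B$-module \emph{and} a right $B$-module under the given actions, and these two structures need not commute with each other, so they do not make $M$ into a left $B^e$-module. Indeed, the left action of $1\# h$ is $m\mapsto h(m)$ while the right action of $1\# g$ is $m\mapsto S^{-1}(g)(m)$, so bimodule compatibility would force the operators $h$ and $S^{-1}(g)$ to commute on $M$ for all $g,h\in H$. This already fails for $M=A$ itself when $H=kG$ with $G$ nonabelian acting faithfully (e.g.\ $G=S_3$ permuting the variables of $k[x_1,x_2,x_3]$). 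This is exactly why the paper introduces the Kaygun algebra $R=A^{e}\rtimes H$ in Remark~\ref{xxrem2.3}: graded $H$-equivariant $A$-bimodules are graded left $R$-modules, and $R$ (with underlying space $A\otimes A^{op}\otimes H$) is a genuinely different, smaller algebra than $B^{e}$ (underlying space $A\otimes H\otimes A^{op}\otimes H$), so the two categories are not the same.

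Consequently your chosen resolution does not exist: $M$ is not an object of the category of graded $B$-bimodules, so one cannot take ``a graded injective resolution $M\to I^{\bullet}$ in graded $B$-bimodules,'' and both halves of the proof inherit the defect. In (a), the structure you must carry through $\Gamma_{\fm_A}$ and cohomology is precisely the $R$-module structure; in (b), the functor $(-)\# H$ of Lemma~\ref{xxlem2.5}(a) accepts $H$-equivariant $A$-bimodules as input, so it cannot be applied termwise to a complex of $B$-bimodules. The repair is the paper's route: resolve $M$ by graded injective left $R$-modules $I^{\bullet}$, observe that $R$ is free as a right $A$-module (from the identity $(1\otimes b\otimes 1)(a\otimes 1\otimes h)=a\otimes b\otimes h$ in $R$), so each $I^{i}$ is graded injective as a left $A$-module, and then your remaining computations---that torsion elements are preserved by the relevant actions via \eqref{E2.0.1}, that $\Gamma_{\fm_A}(N\# H)=\Gamma_{\fm_A}(N)\# H$, and that $I^{i}\# H$ remains $A$-injective because $\dim_k H<\infty$---do go through essentially as you wrote them, but in the correct category.
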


\begin{proof} (a)  We have seen in Remark~\ref{xxrem2.3} that to be
a  $H$-equivariant $A$-bimodule is equivalent to being a left module
over the algebra $R = A^e \rtimes H$ of Kaygun.   So $M$ is a left $R$-module, 
and clearly in fact $R$ is a graded algebra (with $H$ in degree $0$) 
and $M$ is a graded left $R$-module.    
Then $\Gamma_{\fm_A}(M)$ is a left $B$-submodule
and a right $A$-submodule of $M$, by the same proof as in
Lemma~\ref{xxlem3.1}, and it follows that $\Gamma_{\fm_A}(M)$
is an $R$-submodule of $M$.  

Take a graded injective left $R$-module
resolution  $I^{\bullet}$ of $M$.   Since we have in $R$ that  
$(1 \otimes b \otimes 1)(a \otimes 1 \otimes h) = (a \otimes b \otimes h)$,
for all $a, b \in A$ and $h \in H$, it is easy to see that $R$ is a free right $A$-module. 
Thus $I^{\bullet}$ is also a graded injective left $A$-module resolution of $M$, by 
the same argument as in  Lemma~\ref{xxlem3.1}.    Thus we may use this resolution
to calculate $R^d\Gamma_{\fm_A}(M)$.  We conclude that
$R^d\Gamma_{\fm_A}(M)$ retains a left $R$-module structure, in other words, 
it is a graded $H$-equivariant $A$-bimodule.  In fact, it is then easy to see that to obtain the 
left $R$-module structure on $R^d\Gamma_{\fm_A}(M)$, we can use any resolution 
of $M$ by a complex of graded $R$-modules each of which is graded injective as an $A$-module (in any words, the 
modules need not be injective as $R$-modules.  

(b)  The first isomorphism follows from Lemma~\ref{xxlem3.1}.  We note
that given an left $R$-module homomorphism $\phi: M \to N$, we can take the
smash product of each module with $H$ as in Lemma~\ref{xxlem2.5}(a) to
obtain a map $\phi \# 1: M \# H \to N \# H$.  Smashing with $H$ is easily 
seen to be functorial in the sense that $\phi \# 1$ is a
$B$-bimodule map.

Again choosing a graded injective left $R$-module resolution $I^{\bullet}$
of $M$, we can smash this resolution with $H$; this gives a complex 
$I^{\bullet} \# H$ of $B$-bimodules which is a resolution of $A \# H$.  Note that 
$I^{\bullet} \# H$ is a complex of graded injective left $A$-modules, since each $I^i$ is graded 
injective over $A$ as in part (a) and $I^i \# H$ is isomorphic as a left $A$-module to a finite direct sum of copies of $I^i$ (recall that $H$ is finite dimensional).   
%So $I^{\bullet} \# H$ is a graded injective resolution of $M \# H$ as left $A$-modules.

Now it is easy to check that for any graded left $R$-module $N$,  
$\Gamma_{\fm_A}(N \# H) =
\Gamma_{\fm_A}(N) \# H$ as subspaces of $N \# H$, and by functoriality since $\Gamma_{\fm_A}(N)$ is an $R$-submodule of $N$, it follows that
$\Gamma_{\fm_A}(N) \# H$ is a $B$-sub-bimodule of $N \# H$. Finally, using the injective resolution $I^{\bullet} \# H$ to compute the derived
functors, it follows that $R^d\Gamma_{\fm_A}(M\# H) \cong
R^d\Gamma_{\fm_A}(M)\# H$ as $B$-bimodules, for each $d$.
\end{proof}

We would like to consider a generalization of AS Gorenstein algebras to the non-connected case, but where the algebra is still locally finite.  In this paper we propose the following definition.
\begin{definition}
\label{xxdef3.3}
Let $A$ be a ${\mathbb Z}^w$-graded algebra, for some $w\geq 1$, such
that it is locally finite and ${\mathbb N}$-graded with respect to the
$\mid\mid \;\;\mid\mid$-grading. We say $A$ is a {\it generalized
AS Gorenstein algebra} if
\begin{enumerate}
\item
$A$ has injective dimension $d$.
\item
$A$ is noetherian and satisfies the $\chi$ condition (see \cite[Definition 3.7]{AZ}), and the 
functor $\Gamma_{\fm_A}$ has finite cohomological dimension.
\item
There is an $A$-bimodule isomorphism $R^d\Gamma_{\fm_A}(A)^*\cong {^\mu A^1}(-{\bfl})$ (where this is the graded dual), for some $\bfl\in{\mathbb Z}^w$ (called the {\it AS index}) and for some graded algebra
automorphism $\mu$ of $A$ (called the {\it Nakayama automorphism}).  
%\item
%For every simple $A$-module $S$ concentrated in degree zero,
%$\Ext^i_A(S, A)=0$ for all $i\neq d$, and $\Ext^d_A(S,A)$ is concentrated
%in degree $\bfl$.  
\end{enumerate}
\end{definition}

\begin{remark}
\label{xxrem3.4} Other notions of generalized AS regular algebras were
introduced for not necessarily connected graded algebras 
by Martinez-Villa and Solberg in \cite{M-VS} and by Minamoto and Mori
in~\cite{MM}.  Even if $A$ is noetherian of finite global dimension, 
our definition above is slightly stronger than the one in \cite{M-VS}.

\end{remark}

One of our main motivations for introducing the
definition of generalized AS Gorenstein is that when $A$ is a connected graded AS Gorenstein algebra which is a graded $H$-module algebra for some finite-dimensional Hopf
algebra $H$, then $A \# H$ will be generalized AS Gorenstein (Theorem \ref{xxthm4.1}(b)).
The hypotheses in our definition of generalized AS Gorenstein are chosen to make sure that the theory of 
dualizing complexes will work as usual.  We discuss this in the following lemma, which justifies calling $\mu$ in Definition \ref{xxdef3.3}(c) a Nakayama automorphism of $A$.

\begin{lemma}
\label{xxlem3.5}
Let $A$ be generalized AS Gorenstein, and let $\mu\in \Autz(A)$ such that
$$R^d\Gamma_{\fm_A}(A)^*\cong {^\mu A ^1}(-\bfl)$$
for some $\bfl\in {\mathbb Z}$; namely, $\mu$ is a Nakayama automorphism in the sense of
Definition \ref{xxdef3.3}. Then we have
$$\Ext^i_{A^e}(A, A^e)\cong \begin{cases} 0 & i\neq d\\
{^1A^\mu}(\bfl) & i=d \end{cases};
$$
namely, $\mu$ is a Nakayama automorphism in the sense of
Definition \ref{xxdef0.1}.
\end{lemma}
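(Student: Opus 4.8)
The plan is to deduce the $\Ext_{A^e}$ formula from the theory of balanced dualizing complexes, since without homological smoothness we cannot invoke Van den Bergh duality as in the proof of Lemma~\ref{xxlem1.2}. The hypotheses in Definition~\ref{xxdef3.3}(b)---that $A$ is noetherian, satisfies $\chi$, and that $\Gamma_{\fm_A}$ has finite cohomological dimension---are precisely what guarantees, via Van den Bergh's existence theorem (see also \cite{YeZ1}), that $A$ has a balanced dualizing complex $R$ and that local duality gives $R\cong R\Gamma_{\fm_A}(A)^*$, where $(-)^*$ is the graded dual.

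First I would pin down $R$ as an honest shifted invertible bimodule. Since $A$ has finite injective dimension $d$ by Definition~\ref{xxdef3.3}(a), the complex $R\Gamma_{\fm_A}(A)$ is cohomologically bounded, and the hypothesis $R^d\Gamma_{\fm_A}(A)^*\cong {}^\mu A^1(-\bfl)$ identifies its top cohomology. The first real step is to show that the remaining local cohomology vanishes, so that $R\cong {}^\mu A^1(-\bfl)[d]\cong {}^1 A^{\mu^{-1}}(-\bfl)[d]$ is concentrated in a single cohomological degree; I would obtain this from the interplay between the finite injective dimension and the dualizing property of $R$ (testing $R$ against the torsion module $k=A/\fm_A$ through local duality), which forces the cohomology of $R$ into the single slot dictated by the injective dimension.

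The heart of the argument is then the identity $\RHom_{A^e}(A,A^e)\cong R^{-1}$, where $R^{-1}$ is the inverse of $R$ in the derived category of $A$-bimodules. This comes from the rigidity of the balanced dualizing complex \cite{YeZ1}: $R$ is rigid, meaning $R\cong \RHom_{A^e}(A, R\otimes_k R)$, where $A^e$ acts through the outer structure of $R\otimes_k R$ and the inner structure induces the bimodule structure on the Hom. Since $R={}^1 A^{\mu^{-1}}(-\bfl)[d]$ is (up to shift) an invertible bimodule, I would substitute it into the rigidity equation and use that tensoring and $\RHom$ against invertible bimodules are transparent operations; carefully tracking the twist by $\mu^{-1}$ and the shift by $d$ as they pass through the inner $A^e$-structure then lets me solve for $\RHom_{A^e}(A,A^e)$ and identify it with $R^{-1}$.

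Finally, $R^{-1}=\bigl({}^1 A^{\mu^{-1}}(-\bfl)[d]\bigr)^{-1}={}^1 A^{\mu}(\bfl)[-d]$, which is concentrated in cohomological degree $d$ with $H^d={}^1 A^{\mu}(\bfl)$. Reading off cohomology yields $\Ext^i_{A^e}(A,A^e)=0$ for $i\neq d$ and $\Ext^d_{A^e}(A,A^e)\cong {}^1 A^{\mu}(\bfl)$, as claimed; note this is consistent with the normalization $\nu=\mu^{-1}$ appearing in Lemma~\ref{xxlem1.6}. I expect the main obstacle to be the rigidity step: establishing $\RHom_{A^e}(A,A^e)\cong R^{-1}$ and correctly propagating the bimodule twists through the outer and inner $A^e$-actions, since both the concentration step and the closing degree computation are comparatively routine.
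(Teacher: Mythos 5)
Your proposal is correct and follows essentially the same route as the paper's proof: identify the balanced dualizing complex $R\Gamma_{\fm_A}(A)^*\cong {}^\mu A^1(-\bfl)[d]$ via Van den Bergh's existence theorem, then use rigidity to obtain $\RHom_{A^e}(A,A^e)\cong R^{-1}\cong {}^1A^\mu(\bfl)[-d]$; the paper simply cites \cite[Propositions 8.2 and 8.4]{VdB1} for the rigidity-to-inverse step that you propose to verify by hand. The one point the paper makes that you omit is that Van den Bergh's results are stated only for connected graded algebras, so they must be extended (as the paper notes, in analogy with \cite{WZ}) to the locally finite $\mathbb{N}$-graded setting of Definition~\ref{xxdef3.3}.
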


\begin{proof}
Van den Bergh's paper \cite{VdB1} works with connected graded 
algebras only.  However, one can check that the results of \cite[Sections 3-8]{VdB1}
hold with no essential change for a locally finite $\mb{N}$-graded algebra.  This generalization is similar 
to the semi-local complete case which is worked out explicitly in \cite{WZ}. 

Now \cite[Theorem 6.3]{VdB1} shows that 
$R := R^d\Gamma_{\fm_A}(A)^*[d]\cong {^\mu A^1}(-{\bfl})[d]$ is a
balanced dualizing complex for $A$. 
By \cite[Proposition 8.2]{VdB1}, it is also a rigid 
dualizing complex, and this implies in particular by \cite[Proposition 8.4]{VdB1} that 
$R^{-1} = \RHom_{A^e}(A, A^e)$, where $R^{-1}$ is the inverse of $R$ under derived tensor.  Equivalently, since the inverse of the bimodule $^{\mu} A ^1$ is isomorphic to $^1 A^{\mu}$, we have $R^{-1} \cong {^1A^\mu}(\bfl)[-d]$ and thus  
%The rigidity means that
%$$\RHom_{A^e}(A, R\otimes R)\cong R$$
%which is equivalent to
$$\Ext^i_{A^e}(A, A^e)\cong \begin{cases} 0 & i\neq d\\
{^1A^\mu}(\bfl) & i=d \end{cases},
$$
as claimed.  
\end{proof}

We note in the following remark that generalized AS Gorenstein algebras satisfy generalized versions of properties (b,c) of Definition~\ref{xxdef1.1}. 
\begin{remark}
\label{xxrem3.5}
Suppose that $A$ is generalized AS Gorenstein of injective dimension $d$ and AS index $\bfl$.  Then we claim that  for a finite-dimensional graded left $A$-module $S$ concentrated in degree $0$, we have that $\Ext^i(S, A)  = 0$ if $i \neq d$, and that $\Ext^d(S, A)$ is a finite-dimensional $k$-space concentrated in graded degree $-\bfl$.

To see this, note that $A$ satisfies the hypotheses of (the locally finite version of) \cite[Theorem 5.1]{VdB1},  and so we have the 
local duality formula 
\[
R\Gamma_{\fm_A}(M)^* = R\Hom_A(M, R\Gamma_{\fm_A}(A)^*)
\]
for any graded left $A$-module $M$.
Since we have $R\Gamma_{\fm_A}(A)^* \cong {} ^{\mu} A ^1(-\bfl)[d]$ in this case, taking $M = S$ the claim follows 
as long as $R\Gamma_{\fm_A}(S)^*$ is finite-dimensional and concentrated in graded degree $0$.  But by the locally finite version of 
\cite[Lemma 4.4]{VdB1}, since $S$ is finite-dimensional we have $R\Gamma_{\fm_A}(S)^* = S^*$.  

Clearly, one also has a right module analog of the comments above.  
Note that this implies in particular that a connected graded generalized AS Gorenstein algebra is a (noetherian) AS Gorenstein algebra in the usual sense.
\end{remark}

The homological determinant has been an important tool for understanding the theory of 
connected graded AS Gorenstein algebras, especially the invariant theory of group and Hopf algebra actions.  Next, we develop a theory of homological determinant that 
will apply, in certain cases, to generalized AS Gorenstein algebras.
\begin{definition}
\label{xxdef3.6}
Let $A$ be a $\mb{Z}^w$-graded generalized AS Gorenstein algebra.   Let $A$ be a graded left $H$-module algebra for some Hopf algebra $H$.  Since $A$ is an $H$-equivariant $A$-bimodule, then $R^d\Gamma_{\fm_A}(A)^*$ is a graded 
$H_{S^{-2}}$-equivariant $A$-bimodule by Lemma~\ref{xxlem3.2} and
Proposition~\ref{xxpro2.7}.  
Suppose further that there is a nonzero element
$\fe\in R^d\Gamma_{\fm_A}(A)^*$ of degree $\bfl$ such that
\begin{enumerate}
\item[]
\begin{enumerate}
\item[(i)]  
$\fe$ is a left and right generator of the $A$-bimodule
$R^d\Gamma_{\fm_A}(A)^*$; and 
\item[(ii)]
$k\fe$ is a left $H$-submodule.
\end{enumerate}
\end{enumerate}
\begin{enumerate}
\item
The element $\fe$ is called {\it an $H$-stable generator} of
$R^d\Gamma_{\fm_A}(A)^*$.
\item
Under the condition (i) alone, there is a graded algebra automorphism
$\mu$ such that $a\fe=\fe \mu(a)$ for all $a\in A$. We call such a
$\mu$ the {\it $\fe$-Nakayama automorphism} of $A$.  Note that by Lemma~\ref{xxlem1.10},
any other Nakayama automorphism will differ by an inner automorphism.
\item
We define an algebra homomorphism $\hdet: H\to k$ by
\begin{equation}
\label{E3.7.1}\tag{E3.7.1}
\hdet(h)\; \fe = h (\fe)
\end{equation}
for all $h\in H$ and call the map $\hdet$ the
{\it $\fe$-homological determinant} of the $H$-action.
\end{enumerate}
\end{definition}
\noindent 
Note that the $\fe$-homological determinant only depends on the 
$\mid\mid \;\;\mid\mid$-grading, and is independent of possible choices of $\mb{Z}^w$-gradings 
that induce the same $\mid\mid \;\;\mid\mid$-grading, as long as the choice of $\fe$ is fixed.
  When $A$ is connected graded 
AS Gorenstein, then any bimodule generator $\fe$ of $R^d\Gamma_{\fm_A}(A)^*$ is contained in the $1$-dimensional degree-$\bfl$ piece and so is automatically $H$-stable since the $H$-action respects the grading.  The $\fe$-Nakayama automorphism is the unique choice of
Nakayama automorphism in this case, and the homological determinant
is independent of $\fe$.  On the other hand, there is no obvious reason why an arbitrary generalized AS Gorenstein algebra should have an $H$-stable generator.  In future work, we hope to generalize some of our theorems below to work without this assumption.

\begin{remark}
\label{xxrem3.7}
In \cite[Definition 3.3]{KKZ}, the homological determinant $\hdet: H \to k$ is defined, given a finite dimensional Hopf algebra $H$ and a connected graded AS Gorenstein algebra which is a graded left $H$-module algebra.  In this case, we claim that the definition of $\hdet$ we gave above coincides with the definition in \cite{KKZ}.   Both definitions depend on first putting a left
$H$-action on $R^d\Gamma_{\fm_A}(A)$.  We did this in
Lemma~\ref{xxlem3.2} by taking a graded injective resolution $I^{\bullet}$ of $A$
over the Kaygun algebra $R = A^e \rtimes H$ and noting that the terms of this resolution are graded injective over $A$ also.    In \cite{KKZ}, a $B = A \# H$-module graded injective resolution of $A$ is used.   Similarly as in the proof of Lemma~\ref{xxlem3.2}, any $B$-module resolutions in which the terms are graded injective over $A$ must induce the same $B$-module structures on the local cohomology, so we get the same $B$-action on $R^d\Gamma_{\fm_A}(A)$, and in particular 
the same $H$-action, in either case.

 Then $(R^d\Gamma_{\fm_A}(A))^*$ obtains a right $H$-action.  Choosing a generator 
$\fe$ of degree $\bfl$ gives a map $\eta': H \to k$ given by $\fe^h = \fe \eta'(h)$ 
and the homological determinant is defined in \cite[Definition 3.3]{KKZ} to be $\eta = \eta' \circ S$.  In this paper, we use that the right $H$-action of  $(R^d\Gamma_{\fm_A}(A))^*$ induces a left $H$-action satisfying $\fe^h = S(h) \cdot \fe$ as in \eqref{E2.4.1}, since 
 $(R^d\Gamma_{\fm_A}(A))^*$ is a $H_{S^{-2}}$-equivariant bimodule.  Since $\eta \circ S^{2} = \eta$ by \eqref{E1.2.2}, the two definitions 
agree.  Moreover, it has already been commented in \cite[Remark 3.4]{KKZ} that the definition of homological determinant in \cite{KKZ} agrees in the case $H = kG$ is a group algebra with the original definition given in \cite{JoZ}.
\end{remark}

In general it is not easy to actually compute the homological determinant.  We mention
a few examples of connected graded AS Gorenstein algebras for which
the answer is known.

\begin{example}
\label{xxex3.8}
\begin{enumerate}
\item
If $A$ is the commutative polynomial ring $k[V]$ where $A_1=V$
is finite dimensional, then $\hdet \sigma=\det \sigma\mid_{V}$
for all $\sigma \in \Autz(A)$  \cite[p. 322]{JoZ}.
\item If $A$ is a graded down-up algebra (which is a special kind of AS regular algebra of global
dimension 3 generated by 2 degree 1 elements), then $\hdet \sigma=
(\det \sigma\mid_{A_1})^2$ by a result of Kirkman-Kuzmanovich
\cite[Theorem 1.5]{KK}.  
\item Let $A$ be the skew polynomial ring $k_{-1}[x,y]$ and let
$\sigma\in \Autz(A)$ map $x$ to $y$ and $y$ to $x$. Then
$\hdet \sigma=-\det \sigma\mid _{A_1}=1$.
\item If $A$ is $\mb{Z}^w$-graded and $\sigma$ is an automorphism which acts on each graded piece 
by a scalar, then we calculate $\hdet(\sigma)$ in Lemma~\ref{xxlem5.3} below.
\item Let $A$ be noetherian AS Gorenstein and
$\sigma \in \Autz(A)$. If $z$ is a normal nonzerodivisor such that
$\sigma(z)=\lambda z$ for some $\lambda\in k^\times$, then by
\cite[Proposition 2.4]{JiZ},
\begin{equation}
\label{E3.9.1}\tag{E3.9.1}
\hdet \sigma\mid_{A}=\lambda \hdet \sigma\mid_{A/(z)}.
\end{equation}
\end{enumerate}
\end{example}
\noindent On the other hand, it is unclear how to calculate $\hdet \sigma$ for an automorphism $\sigma$ 
of an arbitrary AS regular algebra.  

In the next result, we see that there is a useful restriction on the interaction between a Nakayama automorphism 
$\mu$ of a generalized AS Gorenstein algebra and a Hopf action on the algebra.

\begin{lemma}
\label{xxlem3.9}
Let $A$ be a generalized AS Gorenstein algebra which is a 
graded left $H$-module algebra for some Hopf algebra $H$.  Then $R^d\Gamma_{\fm_A}(A)^*\cong {^\mu A^1}(-{\bfl})$ is
naturally a graded $H_{S^{-2}}$-equivariant $A$-bimodule by Proposition~\ref{xxpro2.7}.  
 Assume further that
$R^d\Gamma_{\fm_A}(A)^*$ has an $H$-stable generator $\fe$, let  $\mu$ be the $\fe$-Nakayama automorphism, and let $\hdet$ be the $\fe$-homological determinant. 

Then the identity
\begin{equation}
\label{E3.10.1}\tag{E3.10.1} (\Xi^l_{\hdet} \circ S^{-2} )(h)(\mu(a))
=\mu(\Xi^r_{\hdet}(h)(a))
\end{equation}
holds for all $h\in H$ and all $a\in A$. As a consequence, if $H$ is
cocommutative or if $\hdet=\epsilon$, then
\begin{equation}
\label{E3.10.2}\tag{E3.10.2} S^{-2}(h)(\mu(a))=\mu(h(a))
\end{equation}
for all $a\in A$ and $h\in H$.
\end{lemma}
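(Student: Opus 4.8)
The plan is to extract the identity from the single bimodule generator $\fe$ by computing $h(a\fe)$ in two different ways. Three facts drive everything: (i) the relation $a\fe = \fe\mu(a)$ defining the $\fe$-Nakayama automorphism; (ii) the relation $h(\fe) = \hdet(h)\fe$ defining the $\fe$-homological determinant, see \eqref{E3.7.1}; and (iii) the $H_{S^{-2}}$-equivariance of $M := R^d\Gamma_{\fm_A}(A)^*$, already granted in the statement via Proposition~\ref{xxpro2.7}, which by Definition~\ref{xxdef2.2} reads $h(amb) = h_1(a)\,h_2(m)\,S^{-2}(h_3)(b)$ for $a,b \in A$, $m \in M$. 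The crucial structural input is that, by hypothesis, $M \cong {}^{\mu}A^1(-\bfl)$ is free of rank one both as a left and as a right $A$-module on $\fe$; this freeness is what will let me cancel $\fe$ at the end.

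First I would compute $h(a\fe)$ by viewing $a\fe$ as $a \cdot \fe \cdot 1$ and applying (iii) with right-hand argument $1$. Since $S^{-2}(h_3)(1_A) = \epsilon(h_3)1_A$, the counit collapses this to $\sum h_1(a)\,h_2(\fe)$, and then (ii) replaces $h_2(\fe)$ by $\hdet(h_2)\fe$, giving $h(a\fe) = \Xi^r_{\hdet}(h)(a)\,\fe$. Next I would compute the same element written as $\fe\mu(a) = 1 \cdot \fe \cdot \mu(a)$; applying (iii) with left-hand argument $1$ and using $h_1(1_A)=\epsilon(h_1)1_A$ together with the counit yields $\sum h_1(\fe)\,S^{-2}(h_2)(\mu(a))$, and (ii) turns this into $\sum \hdet(h_1)\,\fe\,S^{-2}(h_2)(\mu(a))$. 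Rewriting the first answer as $\Xi^r_{\hdet}(h)(a)\,\fe = \fe\,\mu(\Xi^r_{\hdet}(h)(a))$ via (i) and equating the two computations, both sides now carry $\fe$ on the left, so freeness of $M$ as a right $A$-module lets me cancel $\fe$ and conclude
$$\mu(\Xi^r_{\hdet}(h)(a)) = \sum \hdet(h_1)\,S^{-2}(h_2)(\mu(a)).$$

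Finally I would identify the right-hand side with $(\Xi^l_{\hdet}\circ S^{-2})(h)(\mu(a))$. Because $S^{-2}$ is a coalgebra automorphism, $\Delta(S^{-2}(h)) = \sum S^{-2}(h_1)\otimes S^{-2}(h_2)$, whence $(\Xi^l_{\hdet}\circ S^{-2})(h) = \sum \hdet(S^{-2}(h_1))\,S^{-2}(h_2)$; and $\hdet \circ S^{-2} = \hdet$ by \eqref{E1.2.2}, which reduces this to $\sum\hdet(h_1)\,S^{-2}(h_2)$. This matches the displayed equation and gives \eqref{E3.10.1}. For the consequence \eqref{E3.10.2}: if $\hdet = \epsilon$ then both winding automorphisms are the identity and \eqref{E3.10.1} reads $S^{-2}(h)(\mu(a)) = \mu(h(a))$ directly; if $H$ is cocommutative then $S^2 = \id$ and $\Xi^l_{\hdet} = \Xi^r_{\hdet}$, so \eqref{E3.10.1} becomes $\Xi^l_{\hdet}(h)(\mu(a)) = \mu(\Xi^l_{\hdet}(h)(a))$, and since the winding automorphism is a bijection of $H$ I may replace $\Xi^l_{\hdet}(h)$ by an arbitrary element to recover \eqref{E3.10.2}.

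I expect the only genuine subtlety to be the bookkeeping of Sweedler indices through the $S^{-2}$-twisted equivariance relation and the correct recognition of the two winding automorphisms; all of the homological content has already been absorbed into the equivariance structure of $M$ established earlier, so no further local cohomology computation is needed here.
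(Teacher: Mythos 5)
Your proposal is correct and follows essentially the same route as the paper: the paper likewise applies $h$ to the single relation $a\fe = \fe\mu(a)$, expands both sides via the $H_{S^{-2}}$-equivariance and the definitions of $\hdet$ and $\mu$, cancels the free generator $\fe$, and identifies the resulting expression with $(\Xi^l_{\hdet}\circ S^{-2})(h)$ using $\hdet\circ S^{-2}=\hdet$ from \eqref{E1.2.2}. Your handling of the cocommutative case (replacing $\Xi^l_{\hdet}(h)$ by an arbitrary element since the winding automorphism is bijective) matches the paper's observation that \eqref{E3.10.1} and \eqref{E3.10.2} are equivalent in that setting.
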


\begin{proof}
Write $\eta = \hdet: H\to k$ for convenience and recall that this
is an algebra map. Applying $h$ to $\fe\mu(a)=a\fe$ and using that
${^\mu A^1}(-{\bfl})$ is an $H_{S^{-2}}$-equivariant $A$-bimodule,
we have
$$\eta(h_1) \fe S^{-2}(h_2)(\mu(a))=
h_1(a)\eta(h_2)\fe=\fe \mu(h_1(a)\eta(h_2)).$$  This implies that
$$(\Xi^l_{\eta} \circ S^{-2})(h)(\mu(a)) =
\eta(h_1)S^{-2}(h_2)(\mu(a))= \mu(h_1(a)\eta(h_2))
=\mu(\Xi^r_{\eta}(h)(a)),$$
where we use $\eta \circ S^{-2} = \eta$ by \eqref{E1.2.2}.
This is \eqref{E3.10.1}.

If $\eta=\epsilon$ then $\Xi^l_{\eta}=\Xi^r_{\eta}=Id$, while if
$H$ is cocommutative, then $\Xi^l_{\eta}=\Xi^r_{\eta}$ and
$S^2 = 1$. In either case, \eqref{E3.10.2} is equivalent to
\eqref{E3.10.1}.  
\end{proof}

The following interesting result is an immediate corollary.

\begin{theorem}
\label{xxthm3.10}
Let $A$ be noetherian connected graded AS Gorenstein.
Then $\mu_A$ is in the center of $\Autz(A)$.
\end{theorem}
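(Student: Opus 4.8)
The plan is to apply Lemma~\ref{xxlem3.9} to the single Hopf algebra $H = kG$, where $G = \Autz(A)$ is the entire group of graded algebra automorphisms of $A$, acting on $A$ in the tautological way. First I would record that $A$ falls within the framework of Definition~\ref{xxdef3.3}: a noetherian connected graded AS Gorenstein algebra satisfies $\chi$ and has $\Gamma_{\fm_A}$ of finite cohomological dimension, and the structure of its rigid dualizing complex (cf.\ the proof of Lemma~\ref{xxlem3.5}) supplies the bimodule isomorphism $R^d\Gamma_{\fm_A}(A)^* \cong {}^{\mu}A^1(-\bfl)$. Here $\mu$ is the $\fe$-Nakayama automorphism of Definition~\ref{xxdef3.6}, and by Lemma~\ref{xxlem3.5} it agrees with the Nakayama automorphism of Definition~\ref{xxdef0.1}, which in the connected graded case is the unique automorphism $\mu_A$ (there being no nontrivial inner automorphisms). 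Since every element of $G$ is a graded automorphism, each $A_i$ is a $kG$-submodule and $A$ is a graded left $kG$-module algebra.

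Next I would verify the remaining hypothesis of Lemma~\ref{xxlem3.9}, namely the existence of an $H$-stable generator $\fe$ of $R^d\Gamma_{\fm_A}(A)^*$. As observed following Definition~\ref{xxdef3.6}, in the connected graded case any bimodule generator $\fe$ lies in the one-dimensional degree-$\bfl$ component of $R^d\Gamma_{\fm_A}(A)^*$; because the $G$-action respects the grading it must carry this line $k\fe$ into itself, so $\fe$ is automatically $H$-stable. Thus Lemma~\ref{xxlem3.9} applies. Moreover $kG$ is cocommutative, since every $g \in G$ is grouplike, so the lemma yields identity \eqref{E3.10.2}, that is $S^{-2}(h)(\mu_A(a)) = \mu_A(h(a))$ for all $h \in kG$ and $a \in A$.

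Finally I would specialize this to $G$. For a group algebra the antipode is $S(g) = g^{-1}$, whence $S^2 = \id$ and \eqref{E3.10.2} simplifies to $h(\mu_A(a)) = \mu_A(h(a))$. Taking $h = g \in G = \Autz(A)$ gives $g(\mu_A(a)) = \mu_A(g(a))$ for all $a \in A$, i.e.\ $g \circ \mu_A = \mu_A \circ g$. As this holds for every $g \in \Autz(A)$, the automorphism $\mu_A$ commutes with all of $\Autz(A)$ and hence lies in its center.

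I do not expect a serious obstacle once the Hopf algebra is chosen correctly; the whole point is the realization that taking $H = kG$ for the full graded automorphism group converts the twisted commutation relation \eqref{E3.10.1} of Lemma~\ref{xxlem3.9} into an honest centrality statement, the twist by $S^{-2}$ vanishing precisely because group algebras are cocommutative with $S^2 = \id$. The only steps demanding a little care are confirming that the connected graded AS Gorenstein hypothesis places $A$ inside the generalized AS Gorenstein framework and that the $H$-stable generator exists, and both are immediate in the connected graded setting.
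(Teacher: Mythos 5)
Your proof is correct and is essentially identical to the paper's own argument, which likewise applies Lemma~\ref{xxlem3.9} to the tautological action of $H = k\Autz(A)$ on $A$ and notes that this group algebra is cocommutative, so that $S^2 = \id$ and \eqref{E3.10.2} becomes genuine commutation of $\mu_A$ with every graded automorphism. The extra details you supply --- that a noetherian connected graded AS Gorenstein algebra fits the generalized AS Gorenstein framework, and that a generator of the one-dimensional degree-$\bfl$ component of $R^d\Gamma_{\fm_A}(A)^*$ is automatically $H$-stable --- are precisely the points the paper leaves implicit.
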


\begin{proof} This follows from applying Lemma~\ref{xxlem3.9} to
the action of $H = \Autz(A)$ on $A$, and noting that the group
algebra $H$ is cocommutative.
\end{proof}

For the rest of this section we consider Frobenius algebras.
Let $E$ be a finite dimensional ${\mathbb Z}^w$-graded algebra.
We say $E$ is a Frobenius algebra if there is a nondegenerate
associative bilinear form $\langle-, -\rangle: E\times E\to k$,
which is graded of degree ${-\bfl}\in {\mathbb Z}^w$.
This is equivalent to the existence of an isomorphism
$E^* \cong E[-\bfl]$ as graded left (or right) $E$-modules.
As a consequence, the injective dimension of $E$
is zero. The vector $\bfl\in {\mathbb Z}^w$
is called the AS index of $E$. There is a {\it classical Nakayama
automorphism} $\mu\in \Autw(E)$ such that $\langle a, b\rangle =
\langle \mu(b), a \rangle$ for all $a, b\in E$. Let
$\fe=\langle 1, -\rangle=\langle -, 1\rangle\in E^*$. Then $\fe$ is
a generator of $E^*$ such that $E^*\cong {^\mu E ^1}(-\bfl)$ as
graded $E$-bimodules. Note that $\mu$ and $\fe$ are dependent on the
choices of the bilinear form, so they are not necessarily unique.
See \cite{Mu} for further background.

It is easy to see that if $E$ is $\mb{N}$-graded with respect to the $\mid\mid \; \mid\mid$-grading, 
then $E$ is generalized AS Gorenstein, with $R^0 \Gamma_{\fm_E}(E)^* = E^* \cong {^\mu E ^1}(-\bfl)$.
Even if $E$ is not $\mb{N}$-graded, we still have that $E^* \cong {^\mu E ^1}(-\bfl)$; 
taking $G$ to be the subgroup of $\Autw(E)$ generated by $\mu$, then $E$ is naturally a
graded left $kG$-module algebra, and so $E^*$ is a graded $kG$-equivariant $E$-bimodule.
Then Definition~\ref{xxdef3.6} can be interpreted for $\fe \in E^*$ and we have the following.   
\begin{lemma}
\label{xxlem3.11}
Keep the notation above.   Then $\fe$ is $kG$-stable and
$\hdet \mu=1$, where $\hdet$ is the $\fe$-homological determinant.
\end{lemma}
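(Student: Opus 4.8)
The plan is to unwind the $kG$-equivariant structure on $E^*$ explicitly and then reduce both claims to a one-line computation with the defining relation of the classical Nakayama automorphism. The only slightly delicate point is to correctly identify the left $kG$-action on $E^*$ that enters the definition of $\hdet$; everything after that is formal.

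First I would record that $E$, viewed as the regular $E$-bimodule with $H = kG$ acting through the automorphisms in $G = \langle \mu\rangle$, is an $H$-equivariant (that is, $H_{S^0}$-equivariant) $E$-bimodule: since each $g \in G$ is an algebra automorphism and $\Delta(g) = g\otimes g$, the identity $g(amb) = g(a)\,g(m)\,g(b)$ is exactly \eqref{E2.2.1} with $i = 0$. By Proposition~\ref{xxpro2.7}(a), $E^*$ is then $H_{S^{-2}}$-equivariant, and I would trace through that proof to extract the left action. Making $E$ into a right $H$-module via \eqref{E2.4.1} gives $m^g = S^{-1}(g)(m) = g^{-1}(m)$, and the left action on the dual is $l_g|_{E^*} = (r_g|_E)^*$, so that
$$(g\cdot\phi)(m) = \phi(m^g) = \phi(g^{-1}(m))$$
for all $\phi \in E^*$ and $m \in E$. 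For a group algebra $S^{-2}(g) = g$, so the $S^{-2}$-twist is invisible and this is simply the ordinary contragredient action; this is precisely the left $H$-action used in Definition~\ref{xxdef3.6} and \eqref{E3.7.1}.

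Next I would compute $\mu\cdot\fe$. Using $\fe = \langle 1, -\rangle$ and the displayed action with $g = \mu$,
$$(\mu\cdot\fe)(m) = \fe(\mu^{-1}(m)) = \langle 1, \mu^{-1}(m)\rangle = \langle \mu(\mu^{-1}(m)), 1\rangle = \langle m, 1\rangle,$$
where the third equality is the defining relation $\langle a, b\rangle = \langle \mu(b), a\rangle$ with $a = 1$ and $b = \mu^{-1}(m)$. Taking instead $b = 1$ in that same relation and using $\mu(1) = 1$ yields $\langle a, 1\rangle = \langle 1, a\rangle$, which is the asserted equality $\fe = \langle 1, -\rangle = \langle -, 1\rangle$; hence $\langle m, 1\rangle = \langle 1, m\rangle = \fe(m)$, and therefore $\mu\cdot\fe = \fe$.

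This settles both statements simultaneously. Because $\mu(\fe) = \fe$, every element of $G = \langle\mu\rangle$ fixes $\fe$, so $k\fe$ is a left $kG$-submodule and $\fe$ is $kG$-stable in the sense of Definition~\ref{xxdef3.6}; in particular the $\fe$-homological determinant is defined. Evaluating \eqref{E3.7.1} at $\mu$ then gives $\hdet(\mu)\,\fe = \mu(\fe) = \fe$, whence $\hdet\mu = 1$. The only real obstacle is the bookkeeping in the first step---keeping straight the conversion between the left $H$-action on $E$ and the induced left action on $E^*$, and the inverse/antipode that appears there---but once the contragredient formula is in hand, the remaining computation is immediate.
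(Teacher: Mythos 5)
Your proof is correct and takes essentially the same route as the paper's: both identify the action of $\mu$ on $E^*$ as $\phi \mapsto \phi\circ\mu^{-1}$ and then compute $\mu(\fe)(b)=\fe(\mu^{-1}(b))=\langle 1,\mu^{-1}(b)\rangle=\langle b,1\rangle=\fe(b)$ from the defining relation of the classical Nakayama automorphism, deducing $kG$-stability and $\hdet\mu=1$ directly from \eqref{E3.7.1}. The only difference is that you spell out the contragredient formula for the left $kG$-action on $E^*$ (via Proposition~\ref{xxpro2.7} and the triviality of $S^2$ on a group algebra), a bookkeeping step the paper leaves implicit.
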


\begin{proof}  For any $b\in E$,
$$\mu(\fe)(b)=\fe(\mu^{-1}(b))=\langle 1, \mu^{-1}(b)\rangle
=\langle b, 1\rangle=\fe(b)$$
which means that $\mu(\fe)=\fe$. Since $G=\langle \mu \rangle$,
$\fe$ is $G$-stable. The assertion follows by the definition of
$\hdet$ \eqref{E3.7.1}.
\end{proof}

\section{Proof of identity \eqref{HI1}}
\label{xxsec4}

The aim of this section is to prove homological  identity
\eqref{HI1} by computing the Nakayama automorphism of the smash
product of an AS Gorenstein algebra $A$ with a finite dimensional
Hopf algebra $H$ action.  This generalizes and partially recovers a
result of Le Meur \cite[Theorem 1]{LM}, who studies the differential
graded case, and a result of Liu-Wu-Zhu \cite[Theorem 2.12]{LiWZ},
where $A$ is assumed to be $N$-Koszul and $H$ is involutory (although
not necessarily finite-dimensional).  Other papers where similar
problems are considered include \cite{Fa, IR, WZhu}. Our approach differs
from the previous ones in several ways:  we do not assume finite global dimension, 
but rather the weaker Gorenstein condition, and
our methods emphasize the techniques of local cohomology.

\begin{theorem}
\label{xxthm4.1}
Suppose that $A$ is a generalized AS Gorenstein algebra of injective
dimension $d$ which is a graded left $H$-module algebra for a
finite-dimensional Hopf algebra $H$.   Let $B = A \# H$.  Suppose that there is an
$H$-stable generator $\fe\in R^d\Gamma_{\fm_A}(A)^*$.  Let $D_B$
and $D_A$ be the rigid dualizing complexes over $B$ and over $A$,
respectively.
\begin{enumerate}
\item
We have
$$D_B[-d] \cong
R^d\Gamma_{\fm_B}(B)^*  \cong H^* \# R^d\Gamma_{\fm_A}(A)^*
= {}^1H^\sigma \# D_A[-d],$$
as $B$-bimodules, where $\sigma =\mu_{H}^{-1} =S^2\circ \Xi^r_{\int^r}$.
\item
$B$ is generalized AS Gorenstein, with Nakayama automorphism
$$\mu_B=\mu_A\#(\Xi^l_{\hdet}\circ \mu_H),$$
where $\hdet$ is the $\fe$-homological determinant.
\item
If $A$ is connected graded AS regular and $H$ is semisimple, then
$B$ is skew CY.
\end{enumerate}
\end{theorem}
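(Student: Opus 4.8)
The plan is to compute the $B$-bimodule $R^d\Gamma_{\fm_B}(B)^*$ directly, reading off the dualizing-complex isomorphism of part~(a) and the Nakayama automorphism of part~(b) simultaneously, and then to upgrade the resulting generalized AS Gorenstein structure to a skew CY structure in part~(c). Throughout I would regard the regular bimodule $A$ as an $H$-equivariant $A$-bimodule (the case $i=0$ of Definition~\ref{xxdef2.2}, which for $M=A$ is just the module-algebra axiom applied to $amb$), so that the machinery of Sections~\ref{xxsec2} and~\ref{xxsec3} applies with $M=A$.

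First I would establish the chain of $B$-bimodule isomorphisms. Applying Lemma~\ref{xxlem3.2}(b) with $M=A$ gives $R^d\Gamma_{\fm_B}(B)=R^d\Gamma_{\fm_B}(A\#H)\cong R^d\Gamma_{\fm_A}(A)\#H$, where $R^d\Gamma_{\fm_A}(A)$ is $H$-equivariant by Lemma~\ref{xxlem3.2}(a). Taking graded duals and invoking Proposition~\ref{xxpro2.7}(b) (legitimate since $R^d\Gamma_{\fm_A}(A)$ is $H$-equivariant with $i=0$ and $H$ is finite-dimensional) yields
$$R^d\Gamma_{\fm_B}(B)^*\cong \big(R^d\Gamma_{\fm_A}(A)\#H\big)^*\cong {}^1H^\sigma\#R^d\Gamma_{\fm_A}(A)^*,\qquad \sigma=S^2\circ\Xi^r_{\int^r}=\mu_H^{-1},$$
using Lemma~\ref{xxlem2.6} for the identification of $\sigma$. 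By Lemma~\ref{xxlem3.5} the right-hand factor is $D_A[-d]=R^d\Gamma_{\fm_A}(A)^*\cong {}^{\mu}A^1(-\bfl)$, which is the isomorphism chain of part~(a) once $D_B[-d]\cong R^d\Gamma_{\fm_B}(B)^*$ is known; that last step is Lemma~\ref{xxlem3.5} for $B$ and so must wait until $B$ is shown generalized AS Gorenstein. Next I would compute the automorphism. The $H$-stable generator $\fe$ now enters: by Definition~\ref{xxdef3.6}, $R^d\Gamma_{\fm_A}(A)^*={}^{\mu}A^1(-\bfl)$ with $\mu=\mu_A$ the $\fe$-Nakayama automorphism and $h(\fe)=\hdet(h)\fe$. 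By Proposition~\ref{xxpro2.7}(a) this bimodule is $H_{S^{-2}}$-equivariant, so in Lemma~\ref{xxlem2.10} the twist index is $i=-2$, and $\sigma=S^2\circ\Xi^r_{\int^r}$ satisfies \eqref{E2.5.2} by Lemma~\ref{xxlem2.5}(c). Lemma~\ref{xxlem2.10}(b) then gives
$${}^1H^\sigma\#{}^{\mu}A^1(-\bfl)\cong {}^{\rho}B^1(-\bfl),\qquad \rho(a\#h)=\mu(a)\#\big(\Xi^l_{\hdet}\circ\sigma^{-1}\big)(h),$$
and since $\sigma^{-1}=\mu_H$ this reads $\rho=\mu_A\#(\Xi^l_{\hdet}\circ\mu_H)$, the formula of part~(b).

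It then remains to verify that $B$ is genuinely generalized AS Gorenstein and to treat part~(c). For Definition~\ref{xxdef3.3}: $B$ is noetherian as a finite module over noetherian $A$; the $\chi$ condition and the finite cohomological dimension of $\Gamma_{\fm_B}=\Gamma_{\fm_A}$ (Lemma~\ref{xxlem3.1}) transfer from $A$ using that $B$ is finitely generated over $A$ on each side; and $R^i\Gamma_{\fm_B}(B)\cong R^i\Gamma_{\fm_A}(A)\#H$ vanishes for $i\neq d$ by Lemma~\ref{xxlem3.2}(b), so $\injdim B=d$. Condition~(c) of Definition~\ref{xxdef3.3} is exactly the computation above, with Nakayama automorphism $\rho$ and AS index $\bfl$; this proves part~(b), and feeding it back into Lemma~\ref{xxlem3.5} for $B$ gives $D_B=R^d\Gamma_{\fm_B}(B)^*[d]$, completing part~(a). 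For part~(c), if $A$ is connected graded AS regular it is skew CY by Lemma~\ref{xxlem1.2}, hence homologically smooth of global dimension $d$; when $H$ is semisimple it is separable, and $B=A\#H$ is then of finite global dimension and homologically smooth. Combining homological smoothness with Lemma~\ref{xxlem3.5}, which gives $\Ext^i_{B^e}(B,B^e)\cong {}^1B^{\mu_B}(\bfl)$ concentrated in degree $d$, yields precisely \eqref{E1.0.1}, so $B$ is skew CY.

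I expect the main obstacle to be the precise bookkeeping in applying Lemma~\ref{xxlem2.10}: one must track that the equivariance index is $i=-2$, verify \eqref{E2.5.2} for $\sigma=S^2\circ\Xi^r_{\int^r}$, and correctly identify the generator eigenvalue with $\hdet$ and $\sigma^{-1}$ with $\mu_H$, since an antipode or side slip here alters the final formula. A secondary point requiring care is the transfer of homological smoothness from $A$ to $B=A\#H$ in part~(c), where the semisimplicity (hence separability) of $H$ is essential.
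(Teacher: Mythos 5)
Your proposal is correct and follows essentially the same route as the paper's proof: local cohomology of $B$ computed via Lemma~\ref{xxlem3.2}(b), dualized via Proposition~\ref{xxpro2.7}(b) with $\sigma=\mu_H^{-1}$ from Lemma~\ref{xxlem2.6}, the automorphism $\rho=\mu_A\#(\Xi^l_{\hdet}\circ\mu_H)$ extracted via Lemma~\ref{xxlem2.10} with equivariance index $i=-2$, the noetherian/$\chi$/finite cohomological dimension properties transferred to $B$ as a finite extension of $A$, and part (c) from homological smoothness of $A\#H$ plus Lemma~\ref{xxlem3.5}. The one small elision is your inference that $R^i\Gamma_{\fm_B}(B)=0$ for $i\neq d$ alone gives $\injdim B=d$: the paper instead deduces finite injective dimension from the fact that the dualizing complex $D_B\cong{}^{\rho}B^1[d](-\bfl)$ is a shifted twist of $B$ as a one-sided module and dualizing complexes have finite injective dimension by definition, a step your computation already makes available.
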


\begin{proof} 
(a) Recall that as mentioned in the proof of Lemma~\ref{xxlem3.5}, the results 
in \cite{VdB1} hold for not-necessarily connected but locally finite $\mb{N}$-graded algebras; in particular, 
they hold for the algebra $B$.  
Since $A$ is generalized AS Gorenstein, by \cite[Theorem 6.3]{VdB1} we have 
that $D_A[-d] = R^d\Gamma_{\fm_A}(A)^* \cong {}^{\mu} A^1(\bfl)$, as complexes concentrated 
in degree $0$, and $R^i\Gamma_{\fm_A}(A) = 0$ for $i \neq d$.

Now for any $i \geq 0$ we have
$$R^i\Gamma_{\fm_B}(B)^* =  R^i\Gamma_{\fm_A}(A \# H)^* =
(R^i\Gamma_{\fm_A}(A) \# H)^*$$
as $B$-bimodules, where we have used Lemma~\ref{xxlem3.2}, and the
fact that $A$ is an $H$-equivariant $A$-bimodule.    Thus $R^i\Gamma_{\fm_B}(B) = 0$ for $i \neq d$, 
and $R^d\Gamma_{\fm_B}(B)^* \cong (R^d\Gamma_{\fm_A}(A) \# H)^*$.  
Now we use Proposition~\ref{xxpro2.7} to identify
$$(R^d\Gamma_{\fm_A}(A) \# H)^*
\cong {}^1 H ^{\sigma} \# R^d\Gamma_{\fm_A}(A)^*
= {}^1 H ^{\sigma} \# D_A[-d]$$
as $B$-bimodules, where $\sigma = \mu_H^{-1} = S^2 \circ \Xi^r_{\int^r}$ by Lemma~\ref{xxlem2.6}.
Since $D_A[-d] = R^d\Gamma_{\fm_A}(A)^*$
is an $H_{S^{-2}}$-equivariant $A$-bimodule, the final term
is a well-defined $B$-bimodule as in Lemma~\ref{xxlem2.5}(b).

Note that since $H$ is finite-dimensional, $B$ is a finitely generated left and right $A$-module, so $B$ is noetherian also.  The algebra 
$B$ satisfies the $\chi$ condition and has finite cohomological dimension, since these properties also pass to a finite ring extension \cite[Theorem 8.3 and Corollary 8.4]{AZ}.  Thus 
the hypotheses of \cite[Theorem 6.3]{VdB1} also hold for $B$, so the rigid dualizing complex for $B$ exists and 
equals $R\Gamma_{\fm_B}(B)^*$.   Finally, this means that we have $D_B[-d]=R^d\Gamma_{\fm_B}(B)^*$.

(b) By Lemma~\ref{xxlem2.10}, taking $\fu$ to be a bimodule 
generator of $^1 H ^{\sigma}$, then $\fu \# \fe$ is a generator of
$^1 H ^{\sigma} \# {}^{\mu} A ^1(-\bfl)$, and we have
$$ {^1 H ^{\sigma}} \# {}^{\mu} A ^1(-\bfl) \cong {}^{\rho} B ^1$$
as $B$-bimodules, where $\rho = \mu_A\#(\Xi^l_{\hdet}\circ
\sigma^{-1})$. This implies that $D_B \cong {^\rho B^1}[d](-\bfl)$, and thus $\mu_B = \rho$ has 
the claimed formula, and 
we have verified Definition \ref{xxdef3.3}(c) for $B$.  We already checked in part (a) that $B$ is noetherian, satisfies $\chi$, 
and has finite cohomological dimension.
By definition a dualizing complex has finite injective dimension in the derived category of left or right modules 
\cite[Definition 6.1]{VdB1}.  Since the dualizing complex for $B$ is isomorphic to (a shift of) $B$ as a right or left $B$-module, 
it follows that $B$ has finite injective dimension on both sides.  Thus all parts of the definition of generalized AS Gorenstein hold 
for $B$.

(c) If $A$ is AS regular and $H$ is semisimple, then $A$ is homologically smooth by Lemma~\ref{xxlem1.2} and $H$ is 
homologically smooth since it is semisimple.   By \cite[Proposition 2.11]{LiWZ}, $B=A\# H$ is
homologically smooth.  The rest of the definition of skew CY for $B$ follows from part (b) and Lemma~\ref{xxlem3.5}.  
\end{proof}

Theorem \ref{xxthm0.2} (namely, \eqref{HI1}) follows immediately from the previous result.
As mentioned earlier, one of the motivations behind our study of the result above was to 
better understand examples such as  Example \ref{xxex1.7}, where $A$ is only skew-CY, but some skew group algebra $A \rtimes G$ 
becomes CY.  The following corollary, which gives a special case of  Corollary~\ref{xxcor0.6}, explains this phenomenon.  
\begin{corollary}
\label{xxex4.2}
\label{xxcor4.2}
Let $A$ be noetherian connected $\mb{N}$-graded AS regular algebra with Nakayama automorphism $\mu$.
Suppose that $\mu$ has finite order.   If $\sigma$ is a graded algebra
automorphism of $A$ such that $\sigma^n=\mu$ for some $n$, and $\hdet \sigma=1$, then $A \# kG$ is CY, 
where $G = \langle \sigma \rangle$ is the subgroup of $\Autz(A)$ generated by $\sigma$.
\end{corollary}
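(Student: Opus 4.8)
The plan is to apply homological identity \eqref{HI1}, in the form of Theorem~\ref{xxthm4.1}, to the finite-dimensional Hopf algebra $H = kG$, and then to recognize the resulting Nakayama automorphism of $B = A \# kG$ as an inner automorphism.

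First I would verify the hypotheses of Theorem~\ref{xxthm4.1}. Since $\mu = \sigma^n$ has finite order $m$, we have $\sigma^{nm} = \mu^m = 1$, so $\sigma$ has finite order and $G = \langle \sigma \rangle$ is a finite cyclic group; thus $H = kG$ is finite-dimensional, and $A$ is a graded left $kG$-module algebra because $G$ acts by graded automorphisms. As $A$ is connected graded AS regular it is in particular generalized AS Gorenstein, and any bimodule generator $\fe$ of $R^d\Gamma_{\fm_A}(A)^*$ lies in the one-dimensional component in degree $\bfl$ and is therefore automatically $kG$-stable (as noted after Definition~\ref{xxdef3.6}). Hence Theorem~\ref{xxthm4.1} applies, and part~(c) (using that $kG$ is semisimple) shows that $B$ is skew CY; it then remains only to prove that $\mu_B$ is inner.

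Next I would collapse the formula $\mu_B = \mu_A \#(\Xi^l_{\hdet}\circ \mu_H)$ from Theorem~\ref{xxthm4.1}(b). The homological determinant $\hdet\colon kG \to k$ is an algebra map, so it restricts to a group homomorphism $G \to k^{\times}$; since $\hdet(\sigma) = 1$ and $\sigma$ generates $G$, we get $\hdet = \epsilon$ and hence $\Xi^l_{\hdet} = \Xi^l_{\epsilon} = \operatorname{id}$. Moreover $kG$ is a symmetric Frobenius algebra, so $\mu_{kG} = \operatorname{id}$; equivalently, by Lemma~\ref{xxlem2.6} one has $\mu_H^{-1} = S^2 \circ \Xi^r_{\int^r} = \operatorname{id}$, since $S^2 = \operatorname{id}$ on $kG$ and its integral is trivial. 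Therefore $\mu_B = \mu_A \# \operatorname{id}$, acting by $a \# g \mapsto \mu_A(a) \# g$.

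Finally I would exhibit $\mu_B$ as conjugation by a unit. The element $u = 1 \# \mu$ (with $\mu = \sigma^n \in G$) is invertible in $B$, and a short computation with the smash product multiplication and $\Delta(g) = g \otimes g$ shows that conjugation by $u$ sends $a \# 1 \mapsto \mu(a) \# 1$ and fixes each $1 \# g$ (the latter because $G$ is abelian). As $B$ is generated by $A$ and $G$, conjugation by $u$ therefore equals $\mu_A \# \operatorname{id} = \mu_B$, so $\mu_B$ is inner and $B = A \# kG$ is CY by Definition~\ref{xxdef0.1}(3). The only genuinely substantive input is the homological smoothness of $B$ furnished by Theorem~\ref{xxthm4.1}(c); the remaining work is the bookkeeping needed to collapse \eqref{HI1} and to identify $\mu_B$ with conjugation by the group element $\mu$.
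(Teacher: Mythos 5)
Your proposal is correct and follows essentially the same route as the paper's own proof: apply Theorem~\ref{xxthm4.1}(b,c) to $H=kG$, note that $\hdet$ is trivial on $G$ (so $\Xi^l_{\hdet}=\operatorname{id}$) and that $\mu_{kG}=\operatorname{id}$, and conclude that $\mu_B=\mu_A\#\operatorname{id}=\sigma^n\#\operatorname{id}$ is inner, being conjugation by the unit $1\#\sigma^n$. The only cosmetic difference is that the paper justifies $\mu_{kG}=\operatorname{id}$ by semisimplicity of $kG$ \cite[1.7(a)]{Lo}, whereas you derive it from the symmetric Frobenius structure of $kG$ (equivalently, $S^2=\operatorname{id}$ and the trivial integral via Lemma~\ref{xxlem2.6}); both are valid.
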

\begin{proof}
Note that $\sigma$ has finite order. Let $H = kG$, and let $B = A \# H$, which is 
the same as the skew group algebra  $A\rtimes G$.  The algebra $B$ is skew CY by Theorem~\ref{xxthm4.1}(c). 
By Theorem \ref{xxthm4.1}(b), $\mu_B=
\mu_A\# (\Xi^l_{\hdet} \circ \mu_H)$. Since $\hdet \sigma=1$, the homological determinant 
$\hdet: kG \to k$ is trivial, and as a consequence, $\Xi^l_{\hdet}= Id_H$.
Since $H$ is semisimple, $\mu_H=Id_H$ \cite[1.7(a)]{Lo}.  Thus $\mu_B=\mu_A\# Id=
\sigma^{n}\# Id$, which is inner since it is conjugation in $B$ by 
$1\# \sigma^n$.  So $B$ is CY.
\end{proof}

One interesting open question is whether Theorem \ref{xxthm4.1}(c) holds in
a more general setting.

\begin{question}
\label{xxque4.3}
Let $H$ be a Hopf algebra and let $A$ be a left $H$-module algebra,
neither of which is necessarily graded. If $A$ and $H$ are skew
CY, then is $A \# H$ skew CY?  If so, what is the Nakayama automorphism
$\mu_{A \# H}$, in terms of $\mu_A$ and $\mu_H$?
\end{question}
\noindent

\section{Proof of identity \eqref{HI2}}
\label{xxsec5}
The goal of this section is to prove homological identity
\eqref{HI2}, and we consider a slightly more general setting.
Let $A$ be a ${\mathbb Z}^{w}$-graded generalized AS Gorenstein algebra.
Let $G$ be a subgroup of $\Autw(A)$, so that $A$ is a left $kG$-module
algebra where $\sigma(a)$ has its usual meaning for $\sigma \in G
\subseteq \Autw(A)$. In this context, a $kG$-eqivariant $A$-bimodule is
an $A$-bimodule $M$ with left $G$-action, denoted by $\alpha_\sigma:
M \to M$ for each $\sigma\in G$, such that
\begin{equation}
\label{E5.0.1}\tag{E5.0.1} \alpha_{\sigma}(a m b)=\sigma(a)
\alpha_\sigma(m)\sigma(b)
\end{equation}
for all $a,b\in A$, all $m\in M$ and all $\sigma\in G$.
For a fixed $\sigma$, any morphism $\alpha$ of $A$-bimodules satisfying
\eqref{E5.0.1} is also called a $\sigma$-linear $A$-bimodule morphism.

We review the definition of  graded twists of ${\mathbb Z}^w$-graded
algebras and  ${\mathbb Z}^w$-graded modules \cite{Zh}. For simplicity,
we only consider the graded twists by automorphisms of the algebra. Let
$\sigma:= \{\sigma_1,\cdots, \sigma_w\}\subset \Autw(A)$ be a sequence of
commuting ${\mathbb Z}^w$-graded automorphisms of $A$. Recall that $|m|$
denotes the ${\mathbb Z}^w$-degree of a homogeneous element $m$ in a
${\mathbb Z}^w$-graded module $M$.  Let $v$ be an integral vector
$(v_1,\cdots,v_w)$. Write $\sigma^v=\sigma_1^{v_1}\cdots \sigma_w^{v_w}$.
Define the twisting system associated to $\sigma$ to be the set
$$\tilde{\sigma}= \{\sigma^v \mid v\in {\mathbb Z}^w\}.$$
A (left) graded twist of $A$
associated to $\tilde{\sigma}$ is a new graded algebra, denoted by
$A^{\tilde{\sigma}}$, such that $A^{\tilde{\sigma}}=A$ as
a ${\mathbb Z}^w$-graded vector space, and where the new multiplication
$\cc$ of $A^{\tilde{\sigma}}$ is given by
\begin{equation}
\label{E5.0.2}\tag{E5.0.2} a \cc b =\sigma^{|b|}(a) b
\end{equation}
for all homogeneous elements $a,b \in A$.  We note that
the paper \cite{Zh} works primarily with right graded twists, but 
left graded twists are more convenient in our setting.
Given a left graded $A$-module $N$, a left graded twist of $N$ is
defined by the same formula \eqref{E5.0.2} for all homogeneous
$a \in A$, $b\in N$ and denoted by $N^{\tilde{\sigma}}$.  Then 
$N^{\tilde{\sigma}}$ is naturally a left $A^{\tilde{\sigma}}$-module, and the functor $N \mapsto
N^{\tilde{\sigma}}$ gives an equivalence of graded module categories
$A$-$\GrMod \simeq  A^{\tilde{\sigma}}$-$\GrMod$ \cite[Theorem 3.1]{Zh}.

Next, we define the left twist of a graded $kG$-equivariant $A$-bimodule $M$.
Continue to write $\sigma=\{\sigma_1,\cdots, \sigma_w\}$, and assume
now that each $\sigma_i$ is in the center of $G$.  (Since the $\sigma_i$
are assumed to pairwise commute, this additional assumption can
be effected if necessary by replacing $G$ with the subgroup generated by
the $\sigma_i$.) If $v$ is an integral vector $(v_1,\cdots,v_w)$,
write $\alpha_{\sigma}^v$ for $\prod_{s=1}^w \alpha_{\sigma_s}^{v_s}=
\alpha_{\sigma^v}$. The left graded twist of $M$ associated to
$\sigma$, denoted by $M^{\tilde{\sigma}}$, is defined as follows: as a
${\mathbb Z}^w$-graded $k$-space, $M^{\tilde{\sigma}}=M$, and the left and
right $A^{\tilde{\sigma}}$-multiplication is defined by
\begin{equation}
\label{E5.0.3}\tag{E5.0.3}
a\cc m \cc b=\sigma^{|m|+|b|}(a) \alpha_{\sigma}^{|b|}(m) b
\end{equation}
for all homogeneous elements $a,b \in A=A^{\tilde{\sigma}}$ and $m\in M$.
It is routine to check that $M^{\tilde{\sigma}}$ is a left $kG$-equivariant
$\mb{Z}^w$-graded $A^{\tilde{\sigma}}$-bimodule, where $g \in G$ acts
by the same map $\alpha_g: M \to M$ of the underlying $k$-space.  It is
also easy to check that the bimodule twist is functorial, in the sense
that if $\phi: M \to N$ is a graded $kG$-equivariant $A$-bimodule map, then the same underlying
set map gives a $kG$-equivariant  $A^{\tilde{\sigma}}$-bimodule map $\phi:
M^{\tilde{\sigma}} \to N^{\tilde{\sigma}}$.

\begin{lemma}
\label{xxlem5.1}
Assume that $A$ is finitely graded with respect to the $\mid\mid \;\;\mid\mid$-grading.
Let $M$ be a left $kG$-equivariant ${\mathbb Z}^w$-graded $A$-bimodule.
For any $d \geq 0$, $R^d\Gamma_{\fm_{A^{\tilde{\sigma}}}}
(M^{\tilde{\sigma}}) \cong R^d \Gamma_{\fm_A}(M)^{\tilde{\sigma}}$ as
graded left $kG$-equivariant $A^{\tilde{\sigma}}$-bimodules.
\end{lemma}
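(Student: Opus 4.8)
The plan is to leverage the equivalence of graded module categories $A\text{-}\GrMod \simeq A^{\tilde\sigma}\text{-}\GrMod$ furnished by the one-sided twisting functor \cite[Theorem 3.1]{Zh}, and to promote it to the level of $kG$-equivariant bimodules together with their injective resolutions, exactly as local cohomology of equivariant bimodules was computed in Lemma~\ref{xxlem3.2}. The first observation is that $A^{\tilde\sigma} = A$ as a ${\mathbb Z}^w$-graded vector space, so $\fm_{A^{\tilde\sigma}} = \fm_A = A_{\geq 1}$ as a set. I would begin by recording the pointwise identity underlying everything: for a homogeneous element $x$ of a twisted bimodule $N^{\tilde\sigma}$, the left twisted action in \eqref{E5.0.3} (taking the right factor to be $1$) reads $a \cc x = \sigma^{|x|}(a)\, x$, and since each $\sigma_i \in \Autw(A)$ preserves the $\mid\mid\;\;\mid\mid$-grading it stabilizes $A_{\geq n}$. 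Hence $A_{\geq n} \cc x = \sigma^{|x|}(A_{\geq n})\, x = A_{\geq n}\, x$, so the $\fm_{A^{\tilde\sigma}}$-torsion and $\fm_A$-torsion conditions on $x$ coincide and $\Gamma_{\fm_{A^{\tilde\sigma}}}(N^{\tilde\sigma}) = \Gamma_{\fm_A}(N)$ as subsets of $N$.

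Next I would verify that this set-theoretic equality is in fact an identity $\Gamma_{\fm_{A^{\tilde\sigma}}}(N^{\tilde\sigma}) = \Gamma_{\fm_A}(N)^{\tilde\sigma}$ of graded $kG$-equivariant $A^{\tilde\sigma}$-bimodules, natural in $N$. Here $\Gamma_{\fm_A}(N)$ is a graded $kG$-equivariant $A$-sub-bimodule of $N$: it is a sub-bimodule as in the proof of Lemma~\ref{xxlem3.1}, and it is $G$-stable because for $x$ with $A_{\geq n}x = 0$ and $g \in G$ one has $a\,\alpha_g(x) = \alpha_g(g^{-1}(a)x) = 0$ for $a \in A_{\geq n}$, using that $g^{-1}$ stabilizes $A_{\geq n}$. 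Twisting this sub-bimodule by $\tilde\sigma$ produces the $A^{\tilde\sigma}$-sub-bimodule of $N^{\tilde\sigma}$ carried by the same underlying subspace, with left and right actions and $G$-action given by \eqref{E5.0.3}; these are precisely the structures that $\Gamma_{\fm_{A^{\tilde\sigma}}}(N^{\tilde\sigma})$ inherits as a sub-bimodule of $N^{\tilde\sigma}$, so the two coincide. Naturality in $N$ is immediate from the functoriality of the bimodule twist noted after \eqref{E5.0.3}.

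To obtain the derived statement I would then choose resolutions. As in Lemma~\ref{xxlem3.2}, take a resolution $M \to I^{\bullet}$ in the category of graded $kG$-equivariant $A$-bimodules with each $I^j$ graded injective as a left $A$-module, and apply the twist to get $M^{\tilde\sigma} \to (I^{\bullet})^{\tilde\sigma}$. This is again a resolution since $(-)^{\tilde\sigma}$, acting as the identity on underlying graded spaces and maps, is exact; and each $(I^j)^{\tilde\sigma}$ is graded injective as a left $A^{\tilde\sigma}$-module, because its underlying left module is the one-sided twist of the left $A$-module $I^j$ and the equivalence \cite[Theorem 3.1]{Zh} preserves graded injectives. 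Thus $(I^{\bullet})^{\tilde\sigma}$ may be used to compute $R^{\bullet}\Gamma_{\fm_{A^{\tilde\sigma}}}(M^{\tilde\sigma})$. Applying $\Gamma_{\fm_{A^{\tilde\sigma}}}$ termwise and invoking the natural identity of the previous paragraph turns this complex into $\Gamma_{\fm_A}(I^{\bullet})^{\tilde\sigma}$; since twisting is exact it commutes with cohomology, and the $d$th cohomology is $R^d\Gamma_{\fm_A}(M)^{\tilde\sigma}$, as claimed.

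The step I expect to be the main obstacle is the second one: checking that the common underlying torsion subspace simultaneously respects the twisted left and right $A^{\tilde\sigma}$-multiplications and the $G$-action prescribed by \eqref{E5.0.3}, so that the equality of sets is genuinely an equality of $kG$-equivariant $A^{\tilde\sigma}$-bimodules and is natural. Once this naturality is in place, the derived-functor conclusion is a formal consequence of the exactness and injective-preservation of the twisting equivalence; the only remaining point to confirm is that twisting a bimodule which is injective merely as a one-sided module again yields one injective as a one-sided module, which holds because the forgetful image of the bimodule twist coincides with the one-sided twist.
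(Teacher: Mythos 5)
Your proposal is correct and follows essentially the same route as the paper's proof: both establish the underived identity $\Gamma_{\fm_{A^{\tilde{\sigma}}}}(N^{\tilde{\sigma}}) = \Gamma_{\fm_A}(N)^{\tilde{\sigma}}$, resolve $M$ by $kG$-equivariant bimodules that are graded injective as left $A$-modules (the paper phrases this as an injective resolution over the Kaygun algebra $A^e \rtimes kG$, which is the same construction you invoke via Lemma~\ref{xxlem3.2}), twist the resolution using functoriality, and appeal to \cite[Theorem 3.1]{Zh} to see that the twisted terms remain graded injective over $A^{\tilde{\sigma}}$. Your write-up is somewhat more detailed than the paper's (e.g.\ the explicit check of $G$-stability of the torsion submodule and the pointwise computation $A_{\geq n} \cc x = A_{\geq n}x$), but the underlying argument is the same.
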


\begin{proof}
The case $d = 0$ is easy.   Namely, $\Gamma_{\fm_{A^{\tilde{\sigma}}}}
(M^{\tilde{\sigma}}) = \Gamma_{\fm_A}(M)^{\tilde{\sigma}}$ since both can be  
identified with the same subset of $M^{\tilde{\sigma}}$.

Now if $R = A^e \rtimes kG$ is the Kaygun algebra of
Remark~\ref{xxrem2.3}, we may find an injective resolution
$M\to I^\bullet$  in the category of $\mb{Z}^w$-graded left $R$-modules.  
As we have noted in the proof of Lemma~\ref{xxlem3.2}, this is also a graded injective left
$A$-module resolution.  Now since the bimodule twist is functorial,
we can twist the entire complex as in \eqref{E5.0.3} to get an
exact complex $M^{\tilde{\sigma}} \to (I^\bullet)^{\tilde{\sigma}}$
of $kG$-equivariant graded $A^{\tilde{\sigma}}$-bimodules, where the
maps are the same underlying vector space maps.  Since the bimodule
twist \eqref{E5.0.3} restricts to the usual left twist as left
$A$-modules, $(I^\bullet)^{\tilde{\sigma}}$ is a a graded injective resolution
of $M^{\tilde{\sigma}}$ as left $A^{\tilde{\sigma}}$-modules by
\cite[Theorem 3.1]{Zh}, so we can use it to calculate
$R^d\Gamma_{\fm_{A^{\tilde{\sigma}}}}(M^{\tilde{\sigma}})$.  Since
$\Gamma_{\fm_{A^{\tilde{\sigma}}}}((I^i)^{\tilde{\sigma}})
=\Gamma_{\fm_A}(I^i)^{\tilde{\sigma}}$ for each $i$, the result follows.
\end{proof}

\begin{lemma}
\label{xxlem5.2}
Let $M$ be a $kG$-equivariant graded $A$-bimodule.  Then there is an
isomorphism $(M^{\tilde{\sigma}})^* \cong  (M^*)^{\tilde{\sigma}}$
of left $kG$-equivariant graded $A^{\tilde{\sigma}}$-bimodules.
\end{lemma}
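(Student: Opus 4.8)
The plan is to produce an explicit isomorphism, taking advantage of the fact that both bimodules have the \emph{same} underlying graded $k$-space. Since $M^{\tilde{\sigma}} = M$ and $(M^*)^{\tilde{\sigma}} = M^*$ as graded vector spaces, both $(M^{\tilde{\sigma}})^*$ and $(M^*)^{\tilde{\sigma}}$ are literally $M^*$ as graded $k$-spaces, so the real content is that a suitable graded $k$-linear automorphism of $M^*$ matches up the two $A^{\tilde{\sigma}}$-bimodule and $kG$-structures. First I would record the ingredients: the dual-bimodule convention $(a\cdot\phi)(m)=\phi(ma)$ and $(\phi\cdot b)(m)=\phi(bm)$; the induced contragredient $G$-action $g\cdot\phi=\phi\circ\alpha_{g^{-1}}$ on a dual (which is the $S^2=\mathrm{id}$ specialization of Proposition~\ref{xxpro2.7} and is readily checked to make $M^*$ a $kG$-equivariant bimodule); and the one-sided forms $a\cc m=\sigma^{|m|}(a)m$, $m\cc b=\alpha_\sigma^{|b|}(m)b$ of \eqref{E5.0.3}. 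From these the structure on $(M^{\tilde{\sigma}})^*$ reads $(a\star\phi)(m)=\phi(\alpha_\sigma^{|a|}(m)a)$ and $(\phi\star b)(m)=\phi(\sigma^{|m|}(b)m)$, while applying \eqref{E5.0.3} to the bimodule $M^*$ itself gives the competing structure on $(M^*)^{\tilde{\sigma}}$, using that a functional supported on $M_\lambda$ has degree $-\lambda$.

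Comparing the left actions shows immediately that the identity map is \emph{not} a bimodule map, since the powers of $\sigma$ differ by the degree, so the key step is to find the correcting automorphism. My proposal is precomposition with the degree-dependent operator $\theta\colon M\to M$ given on homogeneous elements by $\theta(m)=\alpha_\sigma^{-|m|}(m)$; this is a graded $k$-linear automorphism (each $\alpha_{\sigma^v}$ preserves the grading, and $\theta^{-1}(m)=\alpha_\sigma^{|m|}(m)$), so $\Phi(\phi)=\phi\circ\theta$ is a degree-preserving bijection $(M^*)^{\tilde{\sigma}}\to (M^{\tilde{\sigma}})^*$. The exponent $-|m|$ is not a guess: matching the two left actions forces the exponent function $h$ to satisfy $h(x)=h(x+v)+v$ for all shifts $v$, whose only solution is $h(x)=-x$. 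Equivalently, on the summand of $M^*$ dual to $M_\lambda$ the map $\Phi$ is the contragredient action of $\sigma^\lambda$.

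I would then verify that $\Phi$ intertwines all three structures. The left identity $\Phi(a\cc\phi)=a\star\Phi(\phi)$ follows from the equivariance relation $\alpha_\sigma^k(ma')=\alpha_\sigma^k(m)\sigma^k(a')$ extracted from \eqref{E5.0.1}; the right identity $\Phi(\phi\cc b)=\Phi(\phi)\star b$ follows from the same relation after tracking the total degree $|b|+|m|$ (both sides collapse to $\phi(\sigma^{-|b|}(b)\,\alpha_\sigma^{-|b|-|m|}(m))$). Finally, $kG$-equivariance of $\Phi$ reduces to the claim that $\theta$ commutes with every $\alpha_g$; since $\theta$ is assembled from powers of the $\alpha_{\sigma_i}$, this is exactly where the standing hypothesis that each $\sigma_i$ is central in $G$ enters, via $\alpha_{g^{-1}}\alpha_{\sigma^{-|m|}}=\alpha_{\sigma^{-|m|}}\alpha_{g^{-1}}$. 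Having checked that $\Phi$ is a degree-preserving $k$-linear bijection respecting the left, right, and $kG$-actions, it is the desired isomorphism, giving $(M^{\tilde{\sigma}})^*\cong(M^*)^{\tilde{\sigma}}$.

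I expect the only real obstacle to be organizational: pinning down all four sign and side conventions (dual bimodule, the $-\lambda$ degree shift in the graded dual, the side on which $\sigma$ acts in \eqref{E5.0.3}, and the contragredient $G$-action) so that the degree bookkeeping is internally consistent, and thereby isolating the correct exponent $-|m|$ in $\theta$. Once $\Phi$ is written down correctly, each verification is a one-line manipulation of \eqref{E5.0.1} and the definitions, requiring no input beyond the centrality of the $\sigma_i$.
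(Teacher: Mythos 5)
Your proposal is correct and follows essentially the same route as the paper: your map $\Phi(\phi)=\phi\circ\theta$ with $\theta(m)=\alpha_\sigma^{-|m|}(m)$ is exactly the paper's isomorphism $n\mapsto \alpha_\sigma^{-|n|}(n)$ (the degree-$\lambda$-dependent power of the contragredient action), written as precomposition on $M$ instead of as an operator on $M^*$. The paper likewise computes the transported actions on $(M^{\tilde{\sigma}})^*$ via the dual pairing, checks the same intertwining identities using \eqref{E5.0.1}, and invokes centrality of the $\sigma_i$ in $G$ for the $kG$-equivariance, just as you do.
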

\begin{proof}
Note that $M^*$ is naturally a $kG$-equivariant graded $A$-bimodule by
Proposition~\ref{xxpro2.7} (since $S^2 = 1$), so $(M^*)^{\tilde{\sigma}}$ is 
a $kG$-equivariant graded $A^{\tilde{\sigma}}$-bimodule. Similarly,
$(M^{\tilde{\sigma}})^*$ is a $kG$-equivariant graded
$A^{\tilde{\sigma}}$-bimodule.

By definition, as graded $k$-vector spaces,
$$(M^{\tilde{\sigma}})^*=M^*,$$
and we identify these below. We calculate the left and right
$A^{\tilde{\sigma}}$-action on $(M^{\tilde{\sigma}})^*$. Let $x\in (M^{\tilde{\sigma}})^*$,
$m \in M$ and $a\in A^{\tilde{\sigma}}$. Let $\circ$ denote the left and
right $A^{\tilde{\sigma}}$-actions on $(M^{\tilde{\sigma}})^*$, and let
$\langle \quad, \quad \rangle$ be the canonical bilinear form
$M^* \times M\to k$. Then
$$
\begin{aligned}
\langle x \circ a, m\rangle&=\langle x, a \cc m\rangle
= \langle x, \sigma^{|m|}(a)m\rangle
=\langle x\sigma^{|m|}(a), m\rangle\\
&=\langle x\sigma^{-|x|-|a|}(a), m\rangle,
\qquad {\text{since both sides are zero if $|m|+|x|+|a|\neq 0$.}}
\end{aligned}
$$
Hence $x \circ a=x\sigma^{-|x|-|a|}(a)$.
For the left action, we have
$$\begin{aligned}
\langle a\circ x, m\rangle&=\langle x, m\cc a \rangle
= \langle x, \alpha_{\sigma}^{|a|}(m) a\rangle
=\langle x, \alpha_{\sigma}^{|a|}(m \sigma^{-|a|}(a))\rangle\\
&=\langle (\alpha^*_{\sigma})^{|a|}(x),m \sigma^{-|a|}(a)\rangle
=\langle \sigma^{-|a|}(a) (\alpha^*_{\sigma})^{|a|}(x),m \rangle.
\end{aligned}
$$
The natural $kG$-action on $M^*$, denoted by $\alpha_{\gamma}\mid_{M^*}$
for all $\gamma\in G$, satisfies
$$\alpha_{\gamma}\mid_{M^*}=(\alpha_{S(\gamma)})^*
=[(\alpha_{\gamma})^*]^{-1}$$ since $S(\gamma)=\gamma^{-1}$.
Then
$$\langle \sigma^{-|a|}(a) (\alpha^*_{\sigma})^{|a|}(x),m \rangle=
\langle \sigma^{-|a|}(a) (\alpha_{\sigma}\mid_{M^*})^{-|a|}(x),m
\rangle$$ and consequently,
$$a\circ x=\sigma^{-|a|}(a) (\alpha_{\sigma}\mid_{M^*})^{-|a|}(x).$$

Combining the two calculations above, it follows that the
$A^{\tilde{\sigma}}$-bimodule $(M^{\tilde{\sigma}})^*$ has left
and right $A^{\tilde{\sigma}}$ actions satisfying the rule
\begin{equation}
\label{E5.2.1}\tag{E5.2.1} a\circ n \circ  b=
\sigma^{-|a|}(a)\alpha_{\sigma}^{-|a|}(n) \sigma^{-|a|-|n|-|b|}(b)
\end{equation}
for all homogeneous elements $a,b \in A=A^{\tilde{\sigma}}$ and
homogeneous element $n\in N = M^*$. It is now straightforward to
check that the function $\phi: n \to \alpha_{\sigma}^{-|n|}(n)$
satisfies $\phi( a \cc n \cc b) = a \circ \phi(n) \circ b$ for all
homogeneous $n \in N$, $a, b \in A$.  Thus $\phi$ gives an isomorphism
from $(M^*)^{\tilde{\sigma}}$ to $(M^{\tilde{\sigma}})^*$
(identifying the underlying $k$-space of each with $M^*$).  It is
also clear that $\phi$ respects the left $kG$-action, since the $\sigma_i$ are in the center of $G$.  Thus 
$\phi$ is the required isomorphism of $kG$-equivariant $A^{\tilde{\sigma}}$-bimodules.
\end{proof}

For any $\delta=(\delta_1,\cdots,\delta_w)\in (k^\times)^{w}$ and
any $v=(v_1,\cdots,v_w)\in {\mathbb Z}^w$, write $\delta^v$ for
$\prod_{s=1}^w \delta_s^{v_s}$. Given a $\delta\in (k^\times)^{w}$,
a graded algebra automorphism $\xi_{\delta}$ of $A$ is defined by
$$\xi_{\delta}(a)=\delta^{|a|} a$$
for all homogeneous elements $a\in A$. Note that $\xi_{\delta}$
is in the center of $\Autw(A)$. If $\delta=(c,c,\cdots,c)$, then
$\xi_{\delta}=\xi_c$ as defined in the introduction.

\begin{lemma}
\label{xxlem5.3}
Let $A$ be $\mb{Z}^w$-graded generalized AS Gorenstein and suppose
that $G$ is some subgroup of $\Autw(A)$. Retain the notation as above.
\begin{enumerate}
\item
Suppose $G$ consists of automorphisms of the form $\xi_\delta$.
Then every homogeneous generator $\fe$ of $R^d\Gamma_{\fm_A}(A)^*$
is $kG$-stable and the $\fe$-homological determinant satisfies
$\hdet \xi_{\delta}=\delta^{\bfl}$.
\item
If $R^d\Gamma_{\fm_A}(A)^*$ has a $kG$-stable generator $\fe$,
then every element of $G$ commutes with the $\fe$-Nakayama
automorphism $\mu$ (in $\Autw(A)$).
\end{enumerate}
\end{lemma}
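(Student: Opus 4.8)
The plan is to handle the two parts separately: part (b) will follow almost immediately from Lemma~\ref{xxlem3.9}, whereas part (a) requires identifying the action of $\xi_\delta$ on local cohomology, which is where the real work lies.

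For part (a), write $\zeta_\delta$ for the \emph{grading automorphism} of any $\mathbb{Z}^w$-graded $k$-space, acting on the degree-$n$ component by the scalar $\delta^n$. The key observation is that the automorphism $\xi_\delta$ of $A$ is \emph{literally} the grading automorphism $\zeta_\delta$ of $A$, so the main step is to show that the induced $kG$-action of $\xi_\delta$ on $N := R^d\Gamma_{\fm_A}(A)$ coincides with the grading automorphism $\zeta_\delta$ of $N$. I would argue this as follows. Recall from the proof of Lemma~\ref{xxlem3.2} that this action is computed by choosing a graded injective resolution $A \to I^\bullet$ over the Kaygun algebra $A^e \rtimes kG$, lifting $\xi_\delta\colon A \to A$ to a $\xi_\delta$-linear chain endomorphism $\widetilde{\alpha}$ of $I^\bullet$, applying $\Gamma_{\fm_A}$, and passing to $H^d$. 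Now the grading automorphism $\zeta_\delta$ of $I^\bullet$ is \emph{also} a $\xi_\delta$-linear chain endomorphism lifting $\xi_\delta = \zeta_\delta$ on $A$, since it commutes with the degree-zero differentials and restricts correctly on $A$. Regarding $\xi_\delta$-linear maps as $A$-bimodule maps into the twist ${}^{\xi_\delta}(I^\bullet)^{\xi_\delta}$ (a complex of injectives resolving ${}^{\xi_\delta}A^{\xi_\delta}$), both $\widetilde{\alpha}$ and $\zeta_\delta$ are lifts of the same map along an injective resolution, hence chain homotopic. Applying the additive functor $\Gamma_{\fm_A}$ and taking cohomology, they induce the same map on $N$, so the $\xi_\delta$-action on $N$ is exactly $\zeta_\delta$. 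This homotopy-uniqueness step is the main obstacle of the proof.

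Granting this, part (a) follows quickly. First I would transfer to the dual: by the formula $\alpha_\gamma|_{N^*} = [(\alpha_\gamma)^*]^{-1}$ for the induced action on a graded dual (as in the proof of Lemma~\ref{xxlem5.2}), a direct check shows that if the action on $N$ is the grading automorphism, then the induced $kG$-action on $N^* = R^d\Gamma_{\fm_A}(A)^*$ is again the grading automorphism. Consequently every homogeneous element $\fe \in N^*$ is a $\xi_\delta$-eigenvector with eigenvalue $\delta^{|\fe|}$, so $k\fe$ is a left $kG$-submodule and $\fe$ is $kG$-stable. A homogeneous two-sided generator of $N^* \cong {}^\mu A^1(-\bfl)$ lies in degree $\bfl$, so $\xi_\delta(\fe) = \delta^{\bfl}\fe$; by the defining relation \eqref{E3.7.1} of the homological determinant, $\hdet \xi_\delta = \delta^{\bfl}$, as desired. (As a sanity check, for $A = k[V]$ with $\bfl = \dim V$ this recovers $\hdet \xi_c = c^{\dim V}$, consistent with Example~\ref{xxex3.8}(1).)

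For part (b), I would simply invoke Lemma~\ref{xxlem3.9} for the cocommutative Hopf algebra $H = kG$, using the given $kG$-stable generator $\fe$, its $\fe$-Nakayama automorphism $\mu$, and its $\fe$-homological determinant; these are exactly the hypotheses of that lemma. Since $kG$ is cocommutative and $S^2 = \id$ on $kG$, the identity \eqref{E3.10.2} reads $h(\mu(a)) = \mu(h(a))$ for all $h \in kG$ and all $a \in A$. Specializing to a group element $h = g \in G$ gives $g \circ \mu = \mu \circ g$ in $\Autw(A)$, which is precisely the assertion that every element of $G$ commutes with $\mu$.
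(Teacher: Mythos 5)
Your proof is correct and follows essentially the same route as the paper: identify the induced $kG$-action on $R^d\Gamma_{\fm_A}(A)$ (and hence, by dualizing with the inversion formula, on $R^d\Gamma_{\fm_A}(A)^*$) with the canonical grading action, read off that a homogeneous generator has degree $\bfl$ so that $\hdet \xi_{\delta}=\delta^{\bfl}$, and deduce part (b) directly from \eqref{E3.10.2} applied to the cocommutative Hopf algebra $kG$. The only difference is a technical one in part (a): where you establish the identification via homotopy-uniqueness of $\xi_\delta$-linear lifts along an injective resolution, the paper instead equips an arbitrary graded injective $A^e$-resolution of $A$ with the canonical grading action, observes that the differentials are then automatically $kG$-equivariant, and thus obtains a single resolution over the Kaygun algebra $A^e \rtimes kG$ that computes the action outright.
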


\begin{proof}
(a) Every $\mb{Z}^w$-graded $A$-bimodule $M$ has a canonical
$kG$-equivariant bimodule structure where $\xi_{\delta}(x) =
\delta^{|x|} x$ for all homogeneous $x$.   If we take any graded 
injective resolution of $I^{\bullet}$ of $A$ as left $A^e$-modules, then 
making each $I^i$ into a $kG$-equivariant bimodule in the canonical way, 
it is easy to see that the morphisms in the complex automatically respect 
the $kG$ action.  Then $I^{\bullet}$ is already a graded $R$-module 
resolution of $A$, where $R = A^e \rtimes kG$ is the Kaygun algebra, and so $I^{\bullet}$ is also an injective 
resolution in $A-\GrMod$.  As noted in the proof of Lemma~\ref{xxlem3.2}, we can 
use $I^{\bullet}$ to calculate the $R$-module structure on $M:=R^d\Gamma_{\fm_A}(A)$.  
Thus we see that the induced $kG$-action on $M$ is also the canonical one.  

For an element $g \in G$, the left $g$-action on
$R^d\Gamma_{\fm_A}(A)^*$ is induced by the right $g$ action on
$R^d\Gamma_{\fm_A}(A)$, which is the same as the left $S(g)
= g^{-1}$-action.  Since also elements of $\Hom_k(M_v, k)$ have
degree $-v$, we see that the left $kG$-action on
$R^d\Gamma_{\fm_A}(A)^*$ is also the canonical one.
Now let $\fe$ be any homogeneous generator of $(R^d\Gamma_{\fm}(A))^*$.
Then the degree of $\fe$ is $\bfl$ and the $kG$ action on $\fe$ is
the canonical one given by
$$\xi_{\delta} \cdot (\fe) = \delta^{\bfl} \fe$$
for each $\xi_{\delta} \in G$.
Thus $\fe$ is $G$-stable and $\hdet \xi_{\delta}=\delta^{\bfl}$.

(b) This is a special case of \eqref{E3.10.2}.
\end{proof}

We are now ready to prove our main result determining the Nakayama
automorphism of a graded twist.

\begin{theorem}
\label{xxthm5.4}
Let $A$ be $\mb{Z}^w$-graded generalized AS Gorenstein of AS index $\bfl$.
Let $G \subset \Autw(A)$, and assume that $\fe$ is a $kG$-stable generator 
of $R^d\Gamma_{\fm_A}(A)^*$.   Let $\hdet_A$ be the $\fe$-homological determinant, and let $\mu_A$
be the $\fe$-Nakayama automorphism.   Suppose that $\sigma = (\sigma_1, \dots, \sigma_w)$
is a collection of graded automorphisms in the center of $G$, and let $\hdet(\sigma)$ denote the
vector $(\hdet(\sigma_1),\cdots, \hdet(\sigma_w))\in (k^\times)^w$.   For convenience of notation write $\fm = \fm_A$ and
$\fm^{\tilde{\sigma}} = \fm_{A^{\tilde{\sigma}}}$. 
\begin{enumerate}
\item  
The algebra $A^{\tilde{\sigma}}$ is generalized AS Gorenstein.
Under the isomorphism 
\[
\psi: (R^d\Gamma_{\fm^{\tilde{\sigma}}}(A^{\tilde{\sigma}}))^* \cong (R^d\Gamma_{\fm}(A)^{\tilde{\sigma}})^* \cong  ((R^d\Gamma_{\fm}(A))^*)^{\tilde{\sigma}}
\]
coming from Lemmas~\ref{xxlem5.1} and \ref{xxlem5.2}, $\fe' = \psi^{-1}(\fe)$ is a $kG$-stable generator of 
$(R^d\Gamma_{\fm^{\tilde{\sigma}}}(A^{\tilde{\sigma}}))^*$.  If $\mu_{A^{\tilde{\sigma}}}$ 
is the $\fe'$-Nakayama automorphism, then 
$$ \mu_{A^{\tilde{\sigma}}}=\mu_A\circ \sigma^{\bfl}\circ
\xi_{\hdet(\sigma)}^{-1}.
$$  
\item
%Any $\tau \in G$ is also an automorphism of $A^{\tilde{\sigma}}$.   
Let $\hdet_{A^{\tilde{\sigma}}}$
be the $\fe'$-homological determinant.  Then
$\hdet_A(\tau) = \hdet_{A^{\tilde{\sigma}}}(\tau)$ for all $\tau \in G$.
\item
If $\mu_A\in G$, then $\hdet_{A^{\tilde{\sigma}}} \mu_{A^{\tilde{\sigma}}}
=\hdet_A \mu_A$.
\end{enumerate}
\end{theorem}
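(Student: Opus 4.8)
My plan is to obtain part (3) as a short formal consequence of parts (1) and (2) together with the multiplicativity of the homological determinant. First I would record the following fact, immediate from Definition~\ref{xxdef3.6}(3): if $\phi,\psi$ are graded automorphisms of a generalized AS Gorenstein algebra that each stabilize the line $k\fe$ spanned by the chosen generator, then $\hdet(\phi\psi)=\hdet(\phi)\,\hdet(\psi)$, since
\[
(\phi\psi)(\fe)=\phi\bigl(\hdet(\psi)\fe\bigr)=\hdet(\phi)\,\hdet(\psi)\,\fe .
\]
In other words $\hdet$ restricts to a homomorphism into $k^\times$ on any group of graded automorphisms fixing $k\fe$ setwise; this is the only structural input I need beyond (1) and (2).

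Writing $\delta=\hdet(\sigma)\in(k^\times)^w$, so that $\xi_{\hdet(\sigma)}=\xi_\delta$, part (1) gives $\mu_{A^{\tilde{\sigma}}}=\mu_A\circ\sigma^{\bfl}\circ\xi_\delta^{-1}$. I would next check that each of the three factors stabilizes $k\fe'$, where $\fe'=\psi^{-1}(\fe)$ is the $kG$-stable generator from part (1): the automorphisms $\mu_A$ and $\sigma^{\bfl}=\prod_s\sigma_s^{\bfl_s}$ lie in $G$ (using $\mu_A\in G$ by hypothesis and the $\sigma_s\in G$), and $\xi_\delta$ acts on the degree-$\bfl$ element $\fe'$ by the canonical scalar $\delta^{\bfl}$ as in the proof of Lemma~\ref{xxlem5.3}(a), so $\xi_\delta(\fe')=\delta^{\bfl}\fe'\in k\fe'$. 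In particular their composite $\mu_{A^{\tilde{\sigma}}}$ stabilizes $k\fe'$, so $\hdet_{A^{\tilde{\sigma}}}(\mu_{A^{\tilde{\sigma}}})$ is well-defined, and by the multiplicativity above
\[
\hdet_{A^{\tilde{\sigma}}}(\mu_{A^{\tilde{\sigma}}})=\hdet_{A^{\tilde{\sigma}}}(\mu_A)\;\hdet_{A^{\tilde{\sigma}}}(\sigma^{\bfl})\;\hdet_{A^{\tilde{\sigma}}}(\xi_\delta)^{-1}.
\]

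Finally I would evaluate the three factors. Since $\mu_A\in G$ and each $\sigma_s\in G$, part (2) yields $\hdet_{A^{\tilde{\sigma}}}(\mu_A)=\hdet_A(\mu_A)$ and $\hdet_{A^{\tilde{\sigma}}}(\sigma_s)=\hdet_A(\sigma_s)=\delta_s$, hence $\hdet_{A^{\tilde{\sigma}}}(\sigma^{\bfl})=\prod_s\delta_s^{\bfl_s}=\delta^{\bfl}$. For the last factor, $A^{\tilde{\sigma}}$ has AS index $\bfl$ (the generator $\fe'$ has the same degree $\bfl$ as $\fe$, as $\psi$ is built from graded isomorphisms), and $\xi_\delta$ is a graded automorphism of $A^{\tilde{\sigma}}$, so Lemma~\ref{xxlem5.3}(a) applied to $A^{\tilde{\sigma}}$ gives $\hdet_{A^{\tilde{\sigma}}}(\xi_\delta)=\delta^{\bfl}$. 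Substituting, the $\sigma^{\bfl}$ and $\xi_\delta$ contributions cancel and
\[
\hdet_{A^{\tilde{\sigma}}}(\mu_{A^{\tilde{\sigma}}})=\hdet_A(\mu_A)\cdot\delta^{\bfl}\cdot(\delta^{\bfl})^{-1}=\hdet_A(\mu_A),
\]
as required.

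The computation is short, so the only real obstacle is bookkeeping around the factor $\xi_\delta$, which need not lie in the fixed group $G$; one therefore cannot invoke part (2) for it directly. The resolution is that $\xi_\delta$ is a central graded automorphism acting canonically on local cohomology, so its homological determinant over $A^{\tilde{\sigma}}$ is computed \emph{independently} by Lemma~\ref{xxlem5.3}(a) to be $\delta^{\bfl}$ — precisely the value of $\hdet(\sigma^{\bfl})$, which is what forces the cancellation. If one prefers a single ambient group, one may instead replace $G$ by $\langle G,\xi_\delta\rangle$; this still fixes $k\fe$ and $k\fe'$ and keeps the $\sigma_s$ central, so $\hdet_A$ and $\hdet_{A^{\tilde{\sigma}}}$ are genuine homomorphisms on it and the same argument applies verbatim.
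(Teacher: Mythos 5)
Your proof of part (c) is correct and follows essentially the same route as the paper: both arguments combine parts (a) and (b) with multiplicativity of $\hdet$ and the computation $\hdet(\xi_{\hdet(\sigma)})=\hdet(\sigma)^{\bfl}$ from Lemma~\ref{xxlem5.3}(a), so that the contributions of $\sigma^{\bfl}$ and $\xi_{\hdet(\sigma)}^{-1}$ cancel. The paper handles the factor $\xi_{\hdet(\sigma)}\notin G$ exactly as in your closing remark, by enlarging $G$ to $\langle G,\xi_{\hdet(\sigma)}\rangle$ (legitimate since $\xi_{\hdet(\sigma)}$ is central in $\Autw(A)$ and $\fe$ is automatically stable under it) and then invoking part (b) for the whole composite.
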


\begin{proof} (a)  Since the isomorphism 
$(R^d\Gamma_{\fm^{\tilde{\sigma}}}(A^{\tilde{\sigma}})) \cong (R^d\Gamma_{\fm}(A)^{\tilde{\sigma}})$
in Lemma~\ref{xxlem5.1} is an isomorphism of $kG$-equivariant bimodules, taking duals we also get an isomorphism 
respecting the $kG$-action.  The isomorphism of Lemma~\ref{xxlem5.2} also respects the $kG$-action, so both 
isomorphisms making up $\psi$ are isomorphisms of graded $kG$-equivariant bimodules.  Thus $e'$ must be a $kG$-stable generator
of $(R^d\Gamma_{\fm^{\tilde{\sigma}}}(A^{\tilde{\sigma}}))^*$, of degree $\bfl$.

By the generalized AS Gorenstein property for $A$, we have  $R^d\Gamma_{\fm}(A)^* \cong {^{\mu_{A}} A^{1}}(-\bfl)$, 
and so ${^{\mu_{A}} A^{1}}(-\bfl)$ has a $kG$-stable generator corresponding to $\fe \in R^d\Gamma_{\fm}(A)^*$, which we also call $\fe$.  Combining this isomorphism with $\psi$ we have 
$$
(R^d\Gamma_{\fm^{\tilde{\sigma}}}(A^{\tilde{\sigma}}))^*
\cong ({^{\mu_{A}} A^{1}}(-\bfl))^{\tilde{\sigma}},
$$
where $\fe'$ corresponds to the $kG$-stable generator $\fe$ of $(^{\mu_{A}} A^{1}(-\bfl))^{\tilde{\sigma}}$.

We calculate the left and right action on
$(^{\mu_{A}} A^{1}(-\bfl))^{\tilde{\sigma}}$. For every
$a\in A^{\tilde{\sigma}}$, we have
$$a\cc \fe =\sigma^{\bfl}(a) \fe   = \fe \mu_A(\sigma^{ \bfl}(a))$$
and
$$\fe \cc a= \alpha_{\sigma}^{|a|}(\fe) a =\alpha_{\sigma^{|a|}}(\fe) a
= \fe \hdet({\sigma}^{|a|}) a$$
since by definition the action of $\alpha_{\sigma}$ on $\fe$ is given by the homological determinant.
Hence $(^{\mu_{A}} A^{1}(-\bfl))^{\tilde{\sigma}}$
is isomorphic to ${^\phi (A^{\tilde{\sigma}})^1}$ where
$\phi(\hdet({\sigma}^{|a|}) a) = \mu_A(\sigma^{ \bfl}(a)).$
Thus
$$\phi= \mu_A \circ
\sigma^{\bfl}\circ \xi_{\hdet(\sigma)}^{-1}.$$
The formula for $\mu_{A^{\tilde{\sigma}}}$ follows.  In particular, part (c) of the definition of generalized AS Gorenstein 
holds for $A^{\tilde{\sigma}}$, and the other parts of the definition are easy to check using the properties of 
graded twists \cite{Zh}.  

(b)  
%That each $\tau \in G$ gives an automorphism of $A^{\tilde{\sigma}}$
%is just a restatement of the already noted fact that the twist of a $kG$-equivariant bimodule is again a $kG$-equivariant %bimodule, with the same $kG$-action.
This follows immediately from the fact that $\fe'$ and $\fe$
correspond under an isomorphism of $kG$-equivariant bimodules.

(c) Since $\xi_{\hdet(\sigma)}$ is in the center of $\Autw(A)$ and $\fe$
is always $\xi_{\hdet(\sigma)}$-stable, we can
replace $G$ with the group generated by $\xi_{\hdet(\sigma)}$ and $G$ without changing the hypotheses. Then
by parts (a,b) and the fact that 
\[
\hdet(\sigma^{\bfl} \circ \xi_{\hdet(\sigma)}^{-1})
= (\hdet \textstyle{\prod}_s \sigma_s^{\bfl_s})(\textstyle{\prod}_s \hdet(\sigma_s)^{\bfl_s})^{-1}
=1,
\]
the assertion follows.
\end{proof}

The proof of homological identity \eqref{HI2} is now an easy special case of the preceding theorem.
\begin{proof}[Proof of Theorem \ref{xxthm0.3}]
Let $G$ be the subgroup of $\Aut(A)$ generated by $\sigma$.
Then $G$ is abelian, so in particular $\sigma$ is in the center of $G$.
Since $A$ is connected graded, any generator $\fe$ of
$R^d \Gamma_{\fm_A}(A)^*$ is $G$-stable.  The assertion follows from
Theorem \ref{xxthm5.4}(a).
\end{proof}

We close this section with a simple example.
\begin{example}
\label{xxex5.5}
Let $A:=k_{p_{ij}} [x_1,\cdots,x_w]$ be the skew polynomial ring
generated by $x_1,\cdots,x_n$ subject to the relations
$$x_jx_i=p_{ij}x_ix_j$$
for all $i<j$, where $\{p_{ij}\}_{1\leq i<j\leq w}$ is a set of
nonzero scalars. Then one checks immediately that $A$ is isomorphic to 
the ${\mathbb Z}^w$-graded twist of the
commutative polynomial ring $B = k[x_1, \dots, x_w]$ by  $\sigma=
(\sigma_1,\cdots,\sigma_w)$ where $\sigma_i$ is
defined by 
\[
\sigma_i(x_s)=\begin{cases}p_{is}  x_s & i<s\\
x_s & i\geq s\end{cases}, \ \ \  \text{for all}\ s. 
\]

It is easy to see that $\bfl = (1, 1, \dots, 1)$, and of course $\mu_B = 1$.  
We calculate that $\sigma^{\bfl}(x_s) = \prod_i \sigma_i(x_s) = \prod_{a < s} p_{as} x_s$ 
and using Lemma~\ref{xxlem5.3}(a), we get that 
$\xi_{\hdet(\sigma)}(x_s) = \hdet(\sigma_s) x_s = \prod_{b > s}p_{sb} x_s$.  Combining these 
calculations, by Theorem \ref{xxthm5.4}(a) we have 
$$\mu_A(x_s)=(\prod_{a<s} p_{as}\prod_{b>s}p_{sb}^{-1})x_s$$
for all $s=1,2,\cdots,w$.

If $w=2$, $k_{p_{12}}[x_1,x_2]$ is a ${\mathbb Z}$-graded twist of
$k[x_1,x_2]$. For $w\geq 3$, $k_{p_{ij}} [x_1,\cdots,x_w]$ is not
in general a ${\mathbb Z}$-graded twist of any commutative ring. In
fact,  it is easy to check that $k_{p_{ij}} [x_1,\cdots,x_w]$ is a
${\mathbb Z}$-graded twist of $B$ if and only if there is a set of 
nonzero scalars $\{p_1,\cdots,p_w\}$ such that $p_{ij}=p_i p_j^{-1}$ for all $i, j$.  In particular, 
this example demonstrates why it is useful to consider the generality of $\mb{Z}^w$-graded twists 
as we have done in this section.
\end{example}

\section{Proof of Identity \eqref{HI3}}
\label{xxsec6}

The goal of this section is to prove homological identity
\eqref{HI3}, which we  recall states that the homological determinant of the Nakayama automorphism 
is $1$.  We only consider the connected graded case for simplicity, and we prove it 
only for Koszul AS regular algebras in this paper.  We do show, however, how our formula 
for the Nakayama automorphism of a graded twist from the previous section allows one to reduce 
to the case of an algebra with simple Nakayama automorphism of the form $\xi_c$.  This same method may be useful 
to prove the result in general.

We start with a general lemma about tensor products of Gorenstein algebras.
Let $\bfl(A)$ denote the AS index of an AS Gorenstein algebra $A$.
\begin{lemma}
\label{xxlem6.1} Let $A$ and $B$ be noetherian connected graded
AS Gorenstein algebras.  Suppose that $A\otimes B$ is noetherian.
\begin{enumerate}
\item
The algebra $A \otimes B$ is AS Gorenstein and we have $\mu_{A\otimes B}= \mu_A\otimes \mu_B$ and
$\bfl(A\otimes B)=\bfl(A)+\bfl(B)$ as $\mb{N}$-graded algebras.  The algebra $A \otimes B$ is also $\mb{Z}^2$-graded, 
where the $(i,j)$ graded piece is $A_i \otimes B_j$, and with this grading, $\bfl(A \otimes B) = (\bfl(A), \bfl(B))$. 
\item
Let $\sigma\in \Autz(A)$, $\tau\in \Autz(B)$. Then
$\hdet_{A\otimes B}(\sigma\otimes \tau)=(\hdet_A \sigma)( \hdet_B \tau)$.
\end{enumerate}
\end{lemma}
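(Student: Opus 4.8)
The plan is to reduce both parts to a Künneth-type formula for local cohomology, since for a generalized AS Gorenstein algebra the Nakayama automorphism, the AS index, and the homological determinant of an automorphism are all extracted from the single module $R^d\Gamma_{\fm_A}(A)^*$ and the induced action of automorphisms on it (Definition~\ref{xxdef3.3} and \eqref{E3.7.1}). Write $I = \fm_A\otimes B$ and $J = A\otimes\fm_B$, so that $\fm_{A\otimes B}=I+J$. First I would record the elementary identity $\Gamma_{\fm_{A\otimes B}}=\Gamma_I\circ\Gamma_J$ of torsion functors: since $I$ and $J$ are commuting ideals, a binomial estimate shows $\Gamma_{I+J}(M)=\Gamma_I(M)\cap\Gamma_J(M)=\Gamma_I(\Gamma_J(M))$ for every graded module $M$.

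Next I would invoke the Grothendieck spectral sequence of this composite, $E_2^{p,q}=R^p\Gamma_I(R^q\Gamma_J(A\otimes B))\Rightarrow R^{p+q}\Gamma_{\fm_{A\otimes B}}(A\otimes B)$, whose hypotheses hold because $\Gamma_J$ preserves graded injectives. Since $A$ is free over $k$ and $J=A\otimes\fm_B$, flat base change gives $R^q\Gamma_J(A\otimes B)\cong A\otimes_k R^q\Gamma_{\fm_B}(B)$, which by the AS Gorenstein property of $B$ is concentrated in degree $q=d_B$ and equals $A\otimes_k R^{d_B}\Gamma_{\fm_B}(B)$. Applying $R^p\Gamma_I$ and using that $-\otimes_k V$ is exact over the field $k$ yields $E_2^{p,q}=R^p\Gamma_{\fm_A}(A)\otimes_k R^q\Gamma_{\fm_B}(B)$, nonzero only at $(p,q)=(d_A,d_B)$. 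The spectral sequence therefore degenerates, giving $R^i\Gamma_{\fm_{A\otimes B}}(A\otimes B)=0$ for $i\neq d_A+d_B$ and a natural isomorphism $R^{d_A+d_B}\Gamma_{\fm_{A\otimes B}}(A\otimes B)\cong R^{d_A}\Gamma_{\fm_A}(A)\otimes_k R^{d_B}\Gamma_{\fm_B}(B)$. This already exhibits finite cohomological dimension $d_A+d_B$; together with the noetherian hypothesis and the ($\chi$ and finite-injective-dimension) conditions for $A\otimes B$, which pass to the tensor product (citing \cite{AZ} as needed), it follows that $A\otimes B$ is generalized AS Gorenstein of injective dimension $d_A+d_B$.

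For part (a) I would then take graded $k$-linear duals, using $(U\otimes_k V)^*\cong U^*\otimes_k V^*$ for locally finite graded spaces, to obtain $R^d\Gamma_{\fm_{A\otimes B}}(A\otimes B)^*\cong {}^{\mu_A}A^1(-\bfl(A))\otimes_k {}^{\mu_B}B^1(-\bfl(B))$. A direct inspection of the two-sided actions identifies the external tensor product ${}^{\mu_A}A^1\otimes_k {}^{\mu_B}B^1$ with ${}^{\mu_A\otimes\mu_B}(A\otimes B)^1$, while the shifts combine to $-\bfl(A)-\bfl(B)$ in the total $\mb{N}$-grading and to $-(\bfl(A),\bfl(B))$ in the $\mb{Z}^2$-grading. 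Reading off Definition~\ref{xxdef3.3}(c) gives $\mu_{A\otimes B}=\mu_A\otimes\mu_B$ and both formulas for the AS index. For part (b), the key is that every isomorphism above is natural in the module and hence equivariant for automorphisms: choosing homogeneous generators $\fe_A,\fe_B$ of $R^{d_A}\Gamma_{\fm_A}(A)^*$ and $R^{d_B}\Gamma_{\fm_B}(B)^*$, the element $\fe_A\otimes\fe_B$ generates $R^d\Gamma_{\fm_{A\otimes B}}(A\otimes B)^*$, and the induced action of $\sigma\otimes\tau$ is the tensor product of the induced actions of $\sigma$ and $\tau$. Thus $(\sigma\otimes\tau)(\fe_A\otimes\fe_B)=\sigma(\fe_A)\otimes\tau(\fe_B)=\hdet_A(\sigma)\,\hdet_B(\tau)\,(\fe_A\otimes\fe_B)$, and \eqref{E3.7.1} gives $\hdet_{A\otimes B}(\sigma\otimes\tau)=(\hdet_A\sigma)(\hdet_B\tau)$.

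I expect the one genuinely technical point to be establishing this local-cohomology Künneth formula \emph{with its naturality}, namely justifying the degeneration of the composite-functor spectral sequence and verifying that the maps induced by $\sigma\otimes\tau$ respect the tensor decomposition; this naturality is precisely what drives the homological-determinant computation in part (b). By contrast, the bimodule bookkeeping in Step~(a) and the verification of the auxiliary generalized AS Gorenstein hypotheses for $A\otimes B$ should be routine, modulo a citation for the behaviour of the $\chi$ condition and injective dimension under tensor products.
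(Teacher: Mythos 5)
Your overall strategy is the same as the paper's: both proofs reduce everything to a K\"unneth isomorphism $R\Gamma_{\fm_{A\otimes B}}(A\otimes B)\cong R\Gamma_{\fm_A}(A)\otimes_k R\Gamma_{\fm_B}(B)$ and to its compatibility with automorphisms, and then read off $\mu$, $\bfl$ and $\hdet$ from the dual bimodule exactly as you do. The difference is that the paper obtains the K\"unneth isomorphism by citation (\cite[Theorem 7.1]{VdB1}, together with \cite[Lemma 4.5]{VdB1} for the composite $R\Gamma_{\fm_{A\otimes B}}\cong R\Gamma_{\fm_A}\circ R\Gamma_{\fm_B}$), while you re-derive it via a composite-functor spectral sequence. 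That substitution would be acceptable, except that the hypothesis of your spectral sequence is asserted rather than proved: you need $\Gamma_J$ to send graded injective $(A\otimes B)$-modules to $\Gamma_I$-acyclic ones, and ``$\Gamma_J$ preserves graded injectives'' is a stability statement for the graded torsion theory that is not formal. The paper's chain-level argument is designed to avoid exactly this point: because $A\otimes B$ is free over each factor and $A$, $B$ are noetherian (so direct sums of injectives are injective), a tensor product $I^\bullet\otimes_k J^\bullet$ of injective resolutions of $A$ and of $B$ has terms injective over $B$, and after applying $\Gamma_{\fm_B}$ has terms injective over $A$; it is this complex, not an injective resolution over $A\otimes B$, that computes the composite. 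Relatedly, the $\chi$ condition and finite injective dimension for $A\otimes B$ do not ``pass to the tensor product'' via \cite{AZ}; the paper deduces them from the existence of the balanced/rigid dualizing complex (\cite[Theorem 6.3]{VdB1}).

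The more serious gap is in part (b). Your argument is that the K\"unneth isomorphism is ``natural in the module and hence equivariant for automorphisms,'' but this inference is invalid: $\sigma\otimes\tau$ is not a morphism of $(A\otimes B)$-bimodules, and the action of a group of automorphisms on $R^{d}\Gamma_{\fm_{A\otimes B}}(A\otimes B)$ is not induced by any module map---it is defined (Lemma~\ref{xxlem3.2}, Remark~\ref{xxrem3.7}) by computing local cohomology with a resolution in the equivariant category, i.e.\ by graded modules over $(A\otimes B)\#\, k(G_1\times G_2)$ (equivalently, over the Kaygun algebra). So naturality in the module variable says nothing about how $\sigma\otimes\tau$ acts on the K\"unneth isomorphism. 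Closing this gap is precisely the content of the paper's proof of (b): take $I^\bullet$ a graded injective resolution of $A$ over $A\# kG_1$ with $G_1=\langle\sigma\rangle$, and $J^\bullet$ one of $B$ over $B\# kG_2$ with $G_2=\langle\tau\rangle$; then $I^\bullet\otimes_k J^\bullet$ is a resolution of $A\otimes B$ by graded $(A\otimes B)\# k(G_1\times G_2)$-modules which, by the acyclicity argument above, computes $R\Gamma_{\fm_{A\otimes B}}(A\otimes B)$ together with its $G_1\times G_2$-action, and on $\Gamma_{\fm_A}(I^\bullet)\otimes\Gamma_{\fm_B}(J^\bullet)$ that action is visibly the tensor-factor action, whence $\hdet_{A\otimes B}(\sigma\otimes\tau)=(\hdet_A\sigma)(\hdet_B\tau)$ after dualizing. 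You correctly flagged this equivariance as ``the one genuinely technical point,'' but the proposal does not supply the idea (equivariant resolutions) needed to prove it, so as written the identity in (b) remains unproved.
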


\begin{proof} (a)  Let $d_1=\injdim A$
and $d_2=\injdim B$.   By \cite[Theorem 7.1]{VdB1},
\begin{equation}
\label{E6.1.1}\tag{E6.1.1}
R^{d_1+d_2}\Gamma_{\fm_{A\otimes B}}(A\otimes B)
\cong R^{d_1}\Gamma_{\fm_A}(A)\otimes R^{d_2}\Gamma_{\fm_B}(B), 
\end{equation}
and this is a rigid dualizing complex for $A \otimes B$.  
Since $A$ and $B$ are AS Gorenstein, we have $R^{d_1}\Gamma_{\fm_A}(A)^* \cong {}^{\mu_A} A ^1(-\bfl(A))$ 
and $R^{d_2}\Gamma_{\fm_B}(B)^* \cong {}^{\mu_B} B ^1(-\bfl(B))$, 
and thus $R^{d_1+d_2}\Gamma_{\fm_{A\otimes B}}(A\otimes B)^* \cong {}^{\mu_A \otimes \mu_B} (A \otimes B)^1(-\bfl(A) -\bfl(B))$.     Now we note that $A \otimes B$ satisfies  Definition~\ref{xxdef3.3}, since 
the $\chi$ condition for $A \otimes B$ is part of the existence of the dualizing complex (\cite[Theorem 6.3]{VdB1}), and $A \otimes B$ 
must have finite injective dimension since a dualizing complex always does by definition.   Thus $A \otimes B$ is 
generalized AS Gorenstein and so is also AS Gorenstein in the usual sense by Remark~\ref{xxrem3.5}.  We have already 
calculated its Nakayama automorphism and AS index above.  The proof of the multigraded result is the same, since \eqref{E6.1.1} 
also holds as $\mb{Z}^2$-graded modules.  

(b) 
Let $G_1=\langle \sigma\rangle$ and $G_2=\langle \tau\rangle$.
Then $G = G_1\times G_2$ is naturally a subgroup of $\Autz(A\otimes B)$.   Recall that the 
$kG$ structure on $R^{d_1+d_2}\Gamma_{\fm_{A\otimes B}}(A\otimes B)$ can 
be calculated using an $(A \otimes B) \# kG$-injective resolution of $A \otimes B$, as in Remark~\ref{xxrem3.7}.  
If $I^{\bullet}$ is an $A \# kG_1$ graded injective resolution of $A$ and $J^{\bullet}$ is an 
$B \# kG_2$ graded injective resolution of $B$, then the tensor product complex $I^{\bullet} \otimes_k J^{\bullet}$ 
is a resolution of $A \otimes B$ in the category of graded $(A \otimes B) \# k(G_1 \times G_2)$-modules.
By \cite[Lemma 4.5]{VdB1}, we have $R\Gamma_{\fm_{A \otimes B}} = R\Gamma_{\fm_A} \circ R\Gamma_{\fm_B}$.
Also, since we assume $A$ and $B$ are noetherian, direct sums of injective modules over these rings are injective.  
Then each term of $I^{\bullet} \otimes_k J^{\bullet}$ is an injective $B$-module.  Applying $\Gamma_{\fm_B}$ the resulting complex $I^{\bullet} \otimes \Gamma_{\fm_B}(J^{\bullet})$ consists of injective $A$-modules and so applying $\Gamma_{\fm_A}$ 
we get  that $\Gamma_{\fm_A}(I^{\bullet}) \otimes \Gamma_{\fm_B}(J^{\bullet})$ is equal to $R\Gamma_{\fm_{A \otimes B}}(A \otimes B)$.
Now by construction, the action of $G_1 \times G_2$ on the complex 
\[
\Gamma_{\fm_A}(I^{\bullet}) \otimes \Gamma_{\fm_B}(J^{\bullet}) \cong {}^{\mu_A} A ^1(-\bfl(A))[-d_1] \otimes {}^{\mu_B} B ^1(-\bfl(B))[-d_2]
\]
simply comes from the obvious action induced by $G_1$ acting on the first tensor component and $G_2$ acting on the second tensor component.  Taking $k$-linear 
duals, the result follows from the definition of $\hdet$.
%The consequence follows from the main
%assertion and the following computation
%$$\hdet \mu_{A\otimes C}=\hdet \mu_A \hdet \mu_C=\hdet \mu_A \hdet Id_C=
%\hdet \mu_A.$$
\end{proof}

We now show how to pass from an arbitrary AS Gorenstein algebra to a closely related one with simpler Nakayama automorphism.
\begin{lemma}
\label{xxlem6.2}
Let $A$ be a connected graded AS Gorenstein algebra.  Possibly after replacing the base field $k$ with a finite extension, there is a 
connected graded AS Gorenstein algebra $B$ such that $\hdet \mu_A = \hdet \mu_B$, and where $\mu_B = \xi_c$ for some $c \in k$.   Moreover, $B$ is a multigraded twist of a commutative polynomial extension of $A$.
\end{lemma}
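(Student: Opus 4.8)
The plan is to use the graded-twist identity \eqref{HI2} (Theorem~\ref{xxthm5.4}) to replace $\mu_A$ by a Nakayama automorphism of the simple scalar shape $\xi_c$. The naive attempt would be to twist $A$ itself by some $\sigma\in\Autz(A)$ so that $\mu_A\circ\sigma^{\bfl}\circ\xi_{\hdet(\sigma)}^{-1}=\xi_c$; but solving this inside $\Autz(A)$ forces one to extract an $\bfl$-th root of (essentially) $\mu_A^{-1}$, which need not exist. The key device is to first adjoin a central polynomial variable, producing a new grading direction whose AS-index component is $1$, so that one may twist by $\mu_A^{-1}$ to the \emph{first} power with no root extraction. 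Writing $\ell=\bfl(A)$, I would begin by replacing $k$ with a finite extension $k'$ containing a root $c$ of $x^{\ell+1}-\hdet\mu_A$ (and replacing $A$ by $A\otimes_k k'$, which remains connected graded AS Gorenstein with the same $\ell$, $\mu_A$, and $\hdet\mu_A$).

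Concretely, set $C=A[t]=A\otimes_k k[t]$, viewed as a $\mb{Z}^2$-graded algebra with $\deg a=(\mid\mid a\mid\mid,0)$ for $a\in A$ and $\deg t=(0,1)$; this is noetherian, and by Lemma~\ref{xxlem6.1}(a) it is AS Gorenstein with $\mu_C=\mu_A\otimes\operatorname{id}$ (acting as $\mu_A$ on $A$ and fixing $t$) and $\mb{Z}^2$-AS-index $(\ell,1)$. Define a $\mb{Z}^2$-graded automorphism $\bar\mu$ of $C$ by $\bar\mu|_A=\mu_A^{-1}\circ\xi_c$ and $\bar\mu(t)=t$, so that $\bar\mu=(\mu_A^{-1}\xi_c)\otimes\operatorname{id}$ is a tensor automorphism. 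Let $\sigma=(\operatorname{id},\bar\mu)$ and put $B=C^{\tilde{\sigma}}$. The hypotheses of Theorem~\ref{xxthm5.4} hold for the abelian group $G=\langle \mu_C,\bar\mu\rangle$ (its elements commute because $\mu_A$, $\mu_A^{-1}\xi_c$ commute in the central part of $\Autz(A)$), and since $C$ is connected graded any generator $\fe$ of the top local-cohomology dual is $kG$-stable.

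Because the second component of $\bfl(C)$ equals $1$, Theorem~\ref{xxthm5.4}(a) gives $\mu_B=\mu_C\circ\sigma^{(\ell,1)}\circ\xi_{(1,\,\hdet_C\bar\mu)}^{-1}=\mu_A\circ\bar\mu\circ\xi_{(1,\,\hdet_C\bar\mu)}^{-1}$, with $\bar\mu$ to the first power. On $A$ (second-degree $0$) the scalar factor is trivial, so $\mu_B|_A=\mu_A\circ\mu_A^{-1}\xi_c=\xi_c|_A$; in particular the $\mu_A^{-1}$ inside $\bar\mu$ cancels any non-semisimple part of $\mu_A$. On $t$ one gets $\mu_B(t)=(\hdet_C\bar\mu)^{-1}t$, and using Lemma~\ref{xxlem6.1}(b), Lemma~\ref{xxlem5.3}(a), and multiplicativity of $\hdet$, I compute $\hdet_C\bar\mu=\hdet_A(\mu_A^{-1}\xi_c)\cdot\hdet_{k[t]}(\operatorname{id})=(\hdet\mu_A)^{-1}c^{\ell}$, whence $\mu_B(t)=(\hdet\mu_A)\,c^{-\ell}\,t$. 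Thus $\mu_B$ and $\xi_c$ agree on the generators $A$ and $t$, hence on all of $B$, precisely when $(\hdet\mu_A)c^{-\ell}=c$, i.e. $c^{\ell+1}=\hdet\mu_A$ — which holds by our choice of $c$. Finally, $B=C^{\tilde{\sigma}}$ is connected graded generalized AS Gorenstein by Theorem~\ref{xxthm5.4}(a), hence AS Gorenstein by Remark~\ref{xxrem3.5}, and it is a multigraded twist of the commutative polynomial extension $C=A[t]$. The identity $\hdet\mu_B=\hdet\mu_A$ then follows at once from Theorem~\ref{xxthm5.4}(c) (applicable since $\mu_C\in G$), together with $\hdet_C\mu_C=\hdet_A\mu_A$ from Lemma~\ref{xxlem6.1}(b).

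The main obstacle is exactly the one the polynomial extension circumvents: since $\mu_A$ is central in $\Autz(A)$ (Theorem~\ref{xxthm3.10}), every twist of $A$ alone multiplies $\mu_A$ by an automorphism commuting with it, so its non-semisimple part can only be cancelled by a genuine $\mu_A^{-1}$ factor, and a direct $\bfl$-th root of $\mu_A$ may fail to exist; adjoining $t$ furnishes an index-$1$ direction in which the twisting exponent is $1$. The remaining work is bookkeeping — checking the hypotheses of Theorem~\ref{xxthm5.4} (abelian $G$, $kG$-stable generator) and computing the two homological determinants correctly. For AS regular $A$ (the case needed in this section) $\ell\geq 1$, so $\ell+1\neq 0$ and the equation for $c$ is solvable; for a general AS Gorenstein $A$ with $\ell=-1$ one instead adjoins several polynomial variables so that the relevant total AS index is nonzero, keeping the equation for $c$ solvable.
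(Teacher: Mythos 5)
Your proposal is correct and takes essentially the same route as the paper's proof: pass to the $\mathbb{Z}^2$-graded polynomial extension $A[t]$ (after a finite field extension supplying an $(\ell+1)$-st root of $\hdet\mu_A$, and adjoining a further variable first in the degenerate case $\ell=-1$), twist via Theorem~\ref{xxthm5.4}(a) using the grading direction of AS index $1$, and preserve the homological determinant via Theorem~\ref{xxthm5.4}(c) together with Lemma~\ref{xxlem6.1}. The only difference is cosmetic: the paper performs two successive twists (first by $(\mathrm{Id},\mu_C^{-1})$ to reach $\xi_{(1,c)}$, then a $\mathbb{Z}$-graded twist by $\xi_{(1,d^{-1})}$ to make the scalar uniform), whereas you fold both into a single twist by $(\mathrm{Id},\mu_A^{-1}\circ\xi_c)$ with $c$ chosen in advance so that $c^{\ell+1}=\hdet\mu_A$.
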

\begin{proof} 
Consider first a noetherian connected ${\mathbb N}^2$-graded 
generalized AS Gorenstein algebra $C$ with AS index ${\bfl}=(l,1)\neq (-1,1)$, and 
let $\mu_C$ be the Nakayama automorphism of $C$.  
Define 
$\sigma=\{Id, \mu_C^{-1}\}$. By Theorem \ref{xxthm5.4}(a), 
$\mu_{C^{\tilde{\sigma}}}=\xi^{-1}_{\hdet \sigma}
=\xi_{1,c}$ where $c=\hdet \mu_C$.  
Since $l \neq -1$, write $c=d^{l+1}$
for some $d\in k$, which we can do after replacing $k$ by a finite extension field if necessary.  (It is not hard to see that
all properties of the algebras are preserved by a finite extension of the base field.) 
Since $\hdet$ is independent of the grading, we now view 
$D = C^{\tilde{\sigma}}$ as a connected ${\mathbb N}$-graded algebra.  Let $\tau=\xi_{1,d^{-1}}$.  Then by Theorem \ref{xxthm5.4}(a), $\mu_{D^{\tilde{\tau}}}=\xi^{-1}_{\hdet \tau}=\xi_{c'}$ for 
some $c'\in k^\times$.   Finally, by Theorem~\ref{xxthm5.4}(c), $\hdet \mu_{D^{\tilde{\tau}}} = \hdet \mu_D = \hdet \mu_C$.

Now suppose that $A$ is a noetherian connected graded AS Gorenstein algebra 
with AS index $\bfl \neq -1$.  Let $C=A\otimes k[x]$, which is a noetherian
${\mathbb Z}^2$-graded generalized AS Gorenstein algebra with AS index
$(\bfl,1)\neq (-1,1)$, by Lemma~\ref{xxlem6.1}(a).  By the previous paragraph, there is an algebra $B$ 
(which is a sequence of graded twists of $C$)
such that $\hdet \mu_B = \hdet \mu_A$ and $\mu_B = \xi_c$ for some $c$.  Note that $B$ is still AS Gorenstein, since 
polynomial extensions preserve this property  by Lemma~\ref{xxlem6.1}(a), and it is standard that graded twists do also.

If instead $A$ is noetherian connected graded AS Gorenstein algebra 
with AS index $\bfl=-1$, then first replace $A$ by $A[y]$ which is connected graded AS Gorenstein 
with AS index $\bfl' = \bfl + 1 \neq -1$, and has $\hdet \mu_{A[y]} = \hdet \mu_A$ by Lemma~\ref{xxlem6.1}(b), since obviously $\mu_{k[y]} = 1$.  Then proceed as in the previous paragraph.

In all cases, we see that $B$ exists as stated, where $B$ is a series of (multi)-graded twists of a polynomial extension of $A$ in one or two variables. 
\end{proof}

Now we are ready to prove homological identity \eqref{HI3}.   The main idea is that for Koszul AS regular algebras $A$, the Nakayama 
automorphism of $A$ is related in a known way to the Nakayama automorphism of its Ext algebra $E$, which is Frobenius, and so 
Lemma~\ref{xxlem3.11} applies.
\begin{theorem}
\label{xxthm6.2} Let $A$ be a noetherian connected graded Koszul AS regular 
algebra. Then $\hdet \mu_A=1$.
\end{theorem}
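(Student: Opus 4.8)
The plan is to reduce, via Lemma~\ref{xxlem6.2}, to the case of a diagonal Nakayama automorphism $\mu_A = \xi_c$, and then to transport the problem to the Koszul dual $E = A^{!}$, which is Frobenius, where the triviality of the homological determinant of the classical Nakayama automorphism is already available from Lemma~\ref{xxlem3.11}.

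First I would carry out the reduction. By Lemma~\ref{xxlem6.2}, after a finite extension of the base field there is a connected graded AS Gorenstein algebra $B$ with $\hdet \mu_B = \hdet \mu_A$ and $\mu_B = \xi_c$ for some $c \in k^{\times}$, where $B$ is a sequence of (multi)graded twists of a polynomial extension of $A$. Since the Koszul property and finite global dimension are preserved under tensoring with a polynomial ring, under graded twists, and under finite field extension, $B$ is again noetherian connected graded Koszul AS regular. Renaming $B$ as $A$, it therefore suffices to treat $\mu_A = \xi_c$. In this case Lemma~\ref{xxlem5.3}(a) gives $\hdet \mu_A = \hdet \xi_c = c^{\bfl}$, and for a Koszul AS regular algebra of global dimension $d$ the trivial module has a linear resolution of length $d$, so $\bfl = d$. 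Thus it remains only to prove $c^{d} = 1$.

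Next I would pass to $E = \Ext^{*}_A(k,k) \cong A^{!}$. Because $A$ is Koszul AS regular of global dimension $d$, $E$ is a finite-dimensional graded Frobenius algebra, generated in degree $1$, with one-dimensional top piece $E_d$ and AS index $\bfl_E = d$. The relation between the two Nakayama automorphisms comes from \cite[Theorem~9.2]{VdB1}: the action of $\mu_E$ on $E_1 = (A_1)^{*}$ is, up to the sign $(-1)^{d-1}$, the dual of the action of $\mu_A$ on $A_1$. Since $\mu_A = \xi_c$ acts on $A_1$ by the scalar $c$, its dual acts on $E_1$ by $c^{\pm 1}$, so $\mu_E$ acts on $E_1$ by a scalar $\delta = (-1)^{d-1} c^{\pm 1}$. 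As $E$ is generated in degree $1$, this forces $\mu_E = \xi_{\delta}$ as an automorphism of $E$.

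Finally I would compute $\hdet_E(\mu_E)$ in two ways. On one hand, $E$ is Frobenius and $\mu_E$ is its classical Nakayama automorphism, so Lemma~\ref{xxlem3.11} gives $\hdet_E(\mu_E) = 1$. On the other hand, $\mu_E = \xi_{\delta}$ is of the diagonal form to which Lemma~\ref{xxlem5.3}(a) applies (now for the algebra $E$), giving $\hdet_E(\mu_E) = \delta^{\bfl_E} = \delta^{d}$; note that both computations use the same homogeneous generator of $R^0\Gamma_{\fm_E}(E)^{*} = E^{*}$, and for a $\xi_{\delta}$-type automorphism the homological determinant is independent of that choice by Lemma~\ref{xxlem5.3}(a). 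Comparing, $\delta^{d} = 1$. Since $\bigl((-1)^{d-1}\bigr)^{d} = (-1)^{d(d-1)} = 1$, this reads $c^{\pm d} = 1$, hence $c^{d} = 1$, and therefore $\hdet \mu_A = c^{d} = 1$.

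The main obstacle is the bookkeeping at the Koszul-duality interface: correctly identifying the Frobenius structure of $E$ (that it is generated in degree $1$, has one-dimensional socle $E_d$, and AS index $\bfl_E = d$), and pinning down the sign $(-1)^{d-1}$ in the comparison of $\mu_A$ and $\mu_E$ from \cite[Theorem~9.2]{VdB1}. The sign enters only through its $d$-th power, which is harmless because $d(d-1)$ is always even; likewise the ambiguity between the dual and the contragredient is irrelevant since the final relation $c^{\pm d} = 1$ gives $c^{d} = 1$ either way. It is precisely the preliminary reduction to $\mu_A = \xi_c$ that makes $\mu_E$ itself diagonal, so that Lemma~\ref{xxlem5.3}(a) can be applied to $E$; without it one would instead have to track the full matrix action of $\mu_A$ on $A_1$ and relate a determinant of the induced automorphism on $E_d$ directly to $\hdet_A$.
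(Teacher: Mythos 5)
Your proposal is correct and follows essentially the same route as the paper's own proof: reduce via Lemma~\ref{xxlem6.2} to the case $\mu_A = \xi_c$, pass to the Frobenius Koszul dual $E = A^{!}$ using \cite[Theorem 9.2]{VdB1} to see that its classical Nakayama automorphism is diagonal, and then compare $\hdet_E$ of that automorphism (which is $1$ by Lemma~\ref{xxlem3.11}) with the value $\delta^{d}$ coming from Lemma~\ref{xxlem5.3}(a), concluding $c^{d}=1$ and hence $\hdet \mu_A = c^{d} = 1$. The only cosmetic differences are the ordering of the steps and your $(-1)^{d-1}$ versus the paper's $(-1)^{d+1}$ and the $c^{\pm 1}$ ambiguity, all of which wash out exactly as you observe.
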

\begin{proof}
Let $d$ be the global dimension of $A$.  Let $E = E(A) = \bigoplus_{i \geq 0} \Ext_A^i(k, k)$ be 
the Ext algebra of $A$.  Since $A$ is Koszul, we may write$A = T(V)/(R)$ for a 
space of relations $R \subseteq V^{\otimes 2}$, and $E$ is isomorphic to the Koszul dual $A^{!}$ of $A$, 
$T(V^*)/(R^{\perp})$, which is a finite-dimensional algebra generated in degree $1$.  
Moreover, since $A$ is AS regular, it is known that the algebra $E$ is a Frobenius $k$-algebra~\cite{Sm}; 
Let $\nu$ denote its (classical) Nakayama automorphism.
Then by \cite[Theorem 9.2]{VdB1} and Lemma~\ref{xxlem3.5}, the AS index of $A$ is $\bfl = d$ and one has 
\[
\mu_A \vert_V = \xi_{-1}^{d+1} \circ (\nu \vert_{V^*})^*.
\]

By Lemma~\ref{xxlem6.2}, by extending the base field if necessary and replacing $A$ with a multigraded twist of a polynomial extension of $A$, 
we may reduce to the case that $\mu_A = \xi_c$ for some $c$.   (Note that all of these changes preserve the 
AS regular Koszul hypothesis.)  Then by the formula above,  since $\mu_A \vert_V$ is multiplication 
by $c$, we get that $\nu \vert_{V^*}$ is multiplication by $(-1)^{d+1} c$.  Since $E$ is generated in degree $1$, 
$\nu$ acts on the $1$-dimensional degree $d = \bfl$ piece by the scalar $((-1)^{d+1} c)^d = (-1)^{d(d+1)} c^{d} = c^{d}$.  By Lemma~\ref{xxlem3.11}, $\hdet \nu = c^{d} = 1$.  On the other hand, 
we also have that $\hdet \mu_A = c^{\bfl} = c^d$ by Lemma~\ref{xxlem5.3}, so $\hdet \mu_A = 1$.
\end{proof}

As already alluded to in the introduction, we conjecture that the following more general result holds.
\begin{conjecture}
\label{xxconj6.5}
Let $A$ be a noetherian connected graded AS Gorenstein algebra.  Then $\hdet \mu_A = 1$.
\end{conjecture}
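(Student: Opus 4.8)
The plan is to first collapse the problem to a single scalar identity, and then to imitate the Frobenius computation of Lemma~\ref{xxlem3.11}. By Lemma~\ref{xxlem6.2}, which already holds for arbitrary connected graded AS Gorenstein algebras, after a finite field extension I may replace $A$ by a sequence of multigraded twists of a polynomial extension, obtaining an AS Gorenstein algebra $B$ with $\hdet \mu_B = \hdet \mu_A$ and $\mu_B = \xi_c$ for some $c \in k^\times$. Since $\hdet \xi_c = c^{\bfl}$ by Lemma~\ref{xxlem5.3}(a), where $\bfl$ is the AS index of $B$, the conjecture is equivalent to the assertion that \emph{every connected graded AS Gorenstein algebra with $\mu_A = \xi_c$ satisfies $c^{\bfl} = 1$}. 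The whole problem is thus reduced to controlling the single scalar $c^{\bfl}$.

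Reformulated intrinsically, this says that the Nakayama automorphism fixes the distinguished generator: if $\fe$ generates $R^d\Gamma_{\fm_A}(A)^*$, then $\mu_A(\fe) = \fe$, since by \eqref{E3.7.1} the scalar $\hdet \mu_A$ is exactly the factor by which $\mu_A$ acts on $\fe$. This is precisely the phenomenon established for Frobenius algebras in Lemma~\ref{xxlem3.11}, and I would take that proof as the template. There $\mu(\fe) = \fe$ holds because the \emph{same} automorphism $\mu$ appears both in the induced action on $\fe = \langle 1, -\rangle$ and in the defining form via $\langle a, b\rangle = \langle \mu(b), a\rangle$, and the two occurrences cancel. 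The natural strategy is therefore to produce, for general AS Gorenstein $A$, a self-pairing on the top local cohomology --- the residue/trace pairing underlying the local duality isomorphism $R\Gamma_{\fm_A}(A)^* \cong {^{\mu_A} A^1}(-\bfl)[d]$ --- in which $\mu_A$ appears symmetrically, forcing the induced action on the fundamental class $\fe$ to be trivial.

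For the special case of \emph{finite} global dimension (AS regular $A$) I would instead extend the Koszul argument of Theorem~\ref{xxthm6.2}. There the Ext algebra $E(A) = \bigoplus_i \Ext^i_A(k,k)$ is a finite-dimensional Frobenius algebra whose classical Nakayama automorphism $\nu$ satisfies $\hdet \nu = 1$ by Lemma~\ref{xxlem3.11}, and $\mu_A$ is recovered from $\nu$ on generators. For non-Koszul AS regular algebras one should work with the $A_\infty$-structure on $E(A)$, which remains finite-dimensional and Frobenius in the $A_\infty$ sense, generalizing the correspondence $\mu_A \leftrightarrow \nu$ beyond the cases covered by \cite[Theorem 9.2]{VdB1} and \cite[Theorem 6.3]{BM}. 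A complementary route, available whenever $A$ is a factor of an AS regular algebra by a regular sequence of normal elements, is to iterate Lemma~\ref{xxlem1.6} together with the formula \eqref{E3.9.1} to transport the identity $\hdet \mu = 1$ from the AS regular cover down to $A$.

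The main obstacle is the genuinely infinite global dimension case, where $A$ is AS Gorenstein but not AS regular. Here $E(A)$ is infinite-dimensional, so no finite-dimensional Frobenius dual is available, and the normal-element reduction fails for a general algebra. Everything then hinges on making the derived self-pairing argument of the second paragraph precise: one must show that the fundamental class of the balanced dualizing complex is fixed by $\mu_A$ using only its rigidity and balancedness, with no Frobenius structure at hand. I expect the real difficulty to be in identifying the correct symmetric pairing and verifying the cancellation of the two occurrences of $\mu_A$ inside the derived category, where the shift $[d]$ and the twist $(-\bfl)$ must be tracked with care.
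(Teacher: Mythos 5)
You should note at the outset that this statement is not proved in the paper at all: it is stated as an open conjecture (Conjecture~\ref{xxconj6.5}), and the paper establishes only the special case of noetherian connected graded \emph{Koszul AS regular} algebras (Theorem~\ref{xxthm6.2}). Your proposal, by your own admission, also does not prove it. The portion of your argument that is solid --- the reduction via Lemma~\ref{xxlem6.2} and Lemma~\ref{xxlem5.3}(a) to the case $\mu_A = \xi_c$, where the conjecture becomes $c^{\bfl} = 1$, together with the treatment of the Koszul regular case via the Frobenius Koszul dual and Lemma~\ref{xxlem3.11} --- is exactly the paper's own strategy for Theorem~\ref{xxthm6.2}, so nothing new is gained there. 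Everything beyond that is a hope rather than an argument: you never construct the ``residue/trace self-pairing'' on $R^d\Gamma_{\fm_A}(A)$ in which $\mu_A$ is supposed to appear symmetrically, and the cancellation in Lemma~\ref{xxlem3.11} rests on the finite-dimensional bilinear form identity $\langle a, b\rangle = \langle \mu(b), a\rangle$, for which no analogue is established (or currently known) on the top local cohomology of a general AS Gorenstein algebra. Likewise, the $A_\infty$-Frobenius structure on $E(A)$ for non-Koszul regular $A$, and the extension of the correspondence between $\mu_A$ and $\nu$ beyond \cite[Theorem 9.2]{VdB1} and \cite[Theorem 6.3]{BM}, are asserted but not proved.

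There is also a concrete circularity in your ``complementary route'' of transporting $\hdet \mu = 1$ from an AS regular cover $R$ down to $A = R/(z)$. Writing $\mu_R(z) = cz$ and $za = \tau(a)z$, Lemma~\ref{xxlem1.6} gives $\mu_A = (\mu_R \circ \tau)\vert_A$, and \eqref{E3.9.1} gives $\hdet_R(\mu_R \circ \tau) = c\, \hdet_A(\mu_A)$; hence, assuming $\hdet_R \mu_R = 1$, you obtain $\hdet_A \mu_A = c^{-1} \hdet_R(\tau)$. Concluding $\hdet_A \mu_A = 1$ therefore requires $\hdet_R \tau = c$, but that is precisely Lemma~\ref{xxlem7.1}(a), which the paper proves only \emph{assuming} Conjecture~\ref{xxconj6.5}; without the conjecture there is no independent computation of $\hdet_R \tau$. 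In short: your reduction is correct and matches the paper's, but the genuinely new content a proof would require --- the non-Koszul regular case and, above all, the non-regular Gorenstein case --- is missing, and the conjecture remains open after your proposal.
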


\section{Applications}
\label{xxsec7}

We explore some applications of our results above in this section, including Corollaries~\ref{xxcor0.6} and \ref{xxcor0.7}.   We concentrate 
here on connected ${\mathbb Z}^w$-graded AS Gorenstein algebras $A$.   Some of these results depend on knowing 
that $\hdet \mu_A = 1$, as in Conjecture~\ref{xxconj6.5}.

We start with some applications to the calculation of the homological determinant.   
Note that Corollary \ref{xxcor0.5} is a special case of part (b) of the next result.
\begin{lemma}
\label{xxlem7.1}  Suppose that Conjecture~\ref{xxconj6.5} holds.  Let $A$ be a noetherian connected graded AS
Gorenstein algebra.   
\begin{enumerate}
\item  Suppose that $z$ is a homogeneous $\mu_A$-normal nonzerodivisor of positive degree in $A$, so that $\mu_A(z)= cz$ for some $c\in k^\times$, and let $\tau \in \Autz(A)$ be the automorphism such that $za =\tau(a)z$ for all $a\in A$.    Then
$$\hdet \tau=c.$$ 
In particular, if $A$ has trivial Nakayama automorphism, then $\hdet \tau=1$.
\item
Let $\varphi\in \Autz(A)$. Then
$$\hdet \varphi=\mu_{A[t;\varphi]}(t)\; t^{-1}.$$
\end{enumerate}
\end{lemma}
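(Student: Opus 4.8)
The plan is to deduce both parts from the interaction of three tools already available: Lemma~\ref{xxlem1.6}, which computes the Nakayama automorphism of a quotient by a normal nonzerodivisor; the ``adjunction'' formula \eqref{E3.9.1} for the behaviour of the homological determinant under such a quotient; and the hypothesis $\hdet \mu_A = 1$ supplied by Conjecture~\ref{xxconj6.5}. The idea for part (b) will be to apply part (a) to the skew polynomial ring $A[t;\varphi]$ with the normal element $t$, and then transfer the resulting homological determinant from $A[t;\varphi]$ down to $A$.

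For part (a), I would first observe that $A/(z)$ is again noetherian connected graded AS Gorenstein, so Conjecture~\ref{xxconj6.5} applies to it. By Lemma~\ref{xxlem1.6} its Nakayama automorphism is $\mu_{A/(z)} = (\mu_A \circ \tau)|_{A/(z)}$, whence $\hdet_{A/(z)}\!\big((\mu_A\circ\tau)|_{A/(z)}\big) = 1$. Since $\tau(z) = z$ (because $z$ is normal) and $\mu_A(z) = cz$, the composite satisfies $(\mu_A\circ\tau)(z) = cz$, so applying \eqref{E3.9.1} with $\sigma = \mu_A\circ\tau$ and $\lambda = c$ gives $\hdet_A(\mu_A\circ\tau) = c\cdot\hdet_{A/(z)}\!\big((\mu_A\circ\tau)|_{A/(z)}\big) = c$. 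Finally, since $\hdet$ is multiplicative on automorphisms and $\hdet \mu_A = 1$ by Conjecture~\ref{xxconj6.5}, the left-hand side equals $\hdet_A(\mu_A)\,\hdet_A(\tau) = \hdet_A(\tau)$, yielding $\hdet \tau = c$. The ``in particular'' statement is the case $c = 1$.

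For part (b), I would set $R = A[t;\varphi]$, which is again noetherian connected graded AS Gorenstein. Here $t$ is a homogeneous normal nonzerodivisor of positive degree with $ta = \hat\varphi(a)t$, where $\hat\varphi$ is the extension of $\varphi$ to $R$ fixing $t$; moreover $R/(t) \cong A$ with $\hat\varphi$ inducing $\varphi$. The crucial point is that $t$ is $\mu_R$-normal: grading $R$ by $\mathbb{Z}^2$ with $A_i$ in bidegree $(i,0)$ and $t$ in bidegree $(0,1)$, one has $R_{(0,1)} = kt$, and since $\mu_R$ is a graded automorphism it must send $t$ to $c't$ for some $c'\in k^\times$. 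Here I use that $\mu_R$ is unique in the connected graded case and that $\hdet$ is independent of the choice of grading refining the total grading. Applying part (a) to $R$ with the normal element $t$ then gives $\hdet_R \hat\varphi = c' = \mu_R(t)\,t^{-1}$. On the other hand, since $\hat\varphi(t)=t$, formula \eqref{E3.9.1} with $\sigma = \hat\varphi$ and $\lambda = 1$ gives $\hdet_R \hat\varphi = \hdet_{R/(t)}(\varphi) = \hdet_A \varphi$. Combining, $\hdet_A\varphi = \mu_R(t)\,t^{-1} = \mu_{A[t;\varphi]}(t)\,t^{-1}$, as required.

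The step requiring the most care is the verification that $t$ is $\mu_R$-normal in part (b); this is exactly where the auxiliary $\mathbb{Z}^2$-grading is essential, together with the already-noted independence of $\mu_R$ and $\hdet$ from the particular grading chosen. The remaining subtleties are routine: confirming that passage to the quotient $A/(z)$ and to the skew polynomial extension $A[t;\varphi]$ preserves the noetherian connected graded AS Gorenstein hypotheses needed to invoke Lemma~\ref{xxlem1.6}, \eqref{E3.9.1}, and Conjecture~\ref{xxconj6.5}, and carefully tracking which algebra each homological determinant is computed over.
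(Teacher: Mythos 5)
Your proposal is correct and follows essentially the same route as the paper's own proof: part (a) combines Lemma~\ref{xxlem1.6}, the identity \eqref{E3.9.1}, Conjecture~\ref{xxconj6.5} applied to $A/(z)$, and multiplicativity of $\hdet$; part (b) uses the $\mathbb{Z}^2$-grading on $A[t;\varphi]$ to see that $t$ is a $\mu$-eigenvector, applies part (a) with the normal element $t$ (whose associated automorphism is your $\hat\varphi$, the paper's $\tau$), and transfers the determinant down to $A$ via \eqref{E3.9.1}. No gaps.
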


\begin{proof} (a) First note that $\tau(z)=z$.
By Lemma \ref{xxlem1.6}, $\mu_{A/(z)}=
(\mu_A\circ \tau)\mid_{A/(z)}$.   Since $A/(z)$ is also AS Gorenstein, by assumption we have that  
$\hdet_{A/(z)} \mu_{A/(z)}= \hdet_{A/(z)}
(\mu_A\circ \tau)=1$. By \eqref{E3.9.1},
$$\hdet_A (\mu_A\circ \tau)= c \hdet_{A/(z)}
(\mu_A\circ \tau)\mid_{A/(z)}=c$$ as $(\mu_A\circ\tau)(z)=\mu_A(z)=c
z$.  Since $\hdet_A \mu_A=1$, we have 
$$\hdet_A \tau=\hdet_A (\mu_A\circ \tau)=c.$$
The last sentence is a special case.

(b) Let $B=A[t;\varphi]$.  Note that $B$ is $\mb{Z}^2$-graded, with degree $(i,j)$-piece $A_i t^j$.  Thus $\mu_B$ is a ${\mathbb Z}^2$-graded 
automorphism, so $\mu_B(t)=c t$ for some $c\in k^\times$. Let $\tau\in \Autz(B)$
be defined by $\tau(a t^n)=\varphi(a) t^n$ for all $a\in A$ and $n\geq 0$.
Thus $t b =\tau(b) t$ for all $b\in B$. By \eqref{E3.9.1},
$$\hdet_{B} \tau=1 \hdet_A \varphi=\hdet_A \varphi$$
since $\tau(t)=1 t$. Now applying part (a), one sees that
$$\hdet_A \varphi = \hdet_B \tau=c=\mu_B(t)t^{-1},$$
as claimed.
\end{proof}

One of our goals for future work is to explore how one might define homological determinant 
for automorphisms of not necessarily connected graded algebras.  This suggests the 
following question.
\begin{question}
\label{xxque7.5} Do Theorems \ref{xxthm0.2} and 
\ref{xxthm0.3} suggest a way to define the homological determinant $\hdet$ in a
more general setting? For example, $\mu_{A\#H}(\mu_A\# \mu_H)^{-1}$
should be $1\# \Xi^l_{\hdet}$. If $A$ is local (not graded), this
could be a way of defining $\hdet$.

Another possibility is to use the formula in Proposition \ref{xxpro7.2}(c),
$$\hdet \varphi=\mu_{A[t;\varphi]}(t)t^{-1}.$$
In the ungraded case, one can show that
$\mu_{A[t;\varphi]}(t)t^{-1}\in A^\times$ for any $\varphi\in
\Aut(A)$. 
%It is not obvious that $\varphi\mapsto
%\mu_{A[t;\varphi]}(t)t^{-1}$ is a group homomorphism.
\end{question}

We next study several constructions which help to produce algebras with a trivial Nakayama automorphism, in particular, 
CY algebras.
\begin{proposition}
\label{xxpro7.3}
Let $A$ be a noetherian connected graded AS regular algebra with $\hdet \mu_A = 1$.  Then the 
algebras $B = A[t;\mu_A]$ and $B' = A[t^{\pm 1};\mu_A]$ are CY algebras.
\end{proposition}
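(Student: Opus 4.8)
The plan is to sidestep a direct Ore-extension computation. Applying Lemma~\ref{xxlem1.6} to the normal nonzerodivisor $t$ in $B$ would give $\mu_B|_A = \mu_A\circ\mu_A^{-1} = \id_A$, but it leaves the scalar $c$ in $\mu_B(t)=ct$ undetermined; the natural way to pin down $c$ via \eqref{E3.9.1} reduces to knowing $\hdet_B\mu_B=1$, which is exactly Conjecture~\ref{xxconj6.5} for $B$ and hence unavailable here. Instead I would realize $B$ as a $\mb{Z}^2$-graded twist of a polynomial extension of $A$ and invoke the \emph{unconditional} twist identity of Theorem~\ref{xxthm5.4}, which computes the entire Nakayama automorphism (including its value on $t$) in one stroke.

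First I would set $C = A\otimes k[t]$, viewed as $\mb{Z}^2$-graded with $(i,j)$-component $A_i t^j$. Since $A$ is noetherian connected graded AS regular and $k[t]$ is AS regular, $C$ is noetherian and, by Lemma~\ref{xxlem6.1}(a), AS regular (in particular generalized AS Gorenstein), with $\mu_C=\mu_A\otimes\id$ and AS index $\bfl_C=(\bfl,1)$. Next I would check that $B=A[t;\mu_A]$ is precisely the left $\mb{Z}^2$-graded twist $C^{\tilde{\sigma}}$ for the commuting pair $\sigma=(\sigma_1,\sigma_2)=(\id,\,\mu_A^{-1}\otimes\id)$, i.e.\ twisting only in the $t$-direction. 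Indeed, with the left-twist product $a\cc b=\sigma^{|b|}(a)\,b$ of \eqref{E5.0.3} one computes $a\cc t=\mu_A^{-1}(a)\,t$ and $t\cc a=at$ for $a\in A$, so that $t\cc a=\mu_A(a)\cc t$, which is the defining relation of the Ore extension. This identification is the one delicate bookkeeping point: the twist must act on the $t$-degree alone, and the left/right convention must be matched so the exponents come out correctly (a right twist by the same data would instead produce $A[t;\mu_A^{-1}]$).

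Both $\sigma_1,\sigma_2$ lie in the center of the abelian group $G$ they generate together with $\mu_C$, and since $C$ is connected graded any homogeneous generator $\fe$ of $R^{d_C}\Gamma_{\fm_C}(C)^*$ (with $d_C=\injdim C$) is automatically $kG$-stable, so Theorem~\ref{xxthm5.4} applies. I would then compute $\sigma^{\bfl_C}=\sigma_1^{\bfl}\circ\sigma_2^{1}=\mu_A^{-1}$, and, using Lemma~\ref{xxlem6.1}(b) together with the hypothesis $\hdet\mu_A=1$, that $\hdet_C(\sigma)=(\hdet_C\sigma_1,\hdet_C\sigma_2)=(1,(\hdet_A\mu_A)^{-1})=(1,1)$, so $\xi_{\hdet(\sigma)}=\id$. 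Theorem~\ref{xxthm5.4}(a) then yields
\[
\mu_B=\mu_C\circ\sigma^{\bfl_C}\circ\xi_{\hdet(\sigma)}^{-1}=\mu_C\circ\mu_A^{-1},
\]
which restricts to $\mu_A\circ\mu_A^{-1}=\id$ on $A$ and fixes $t$; hence $\mu_B=\id_B$ (consistent with the partial computation from Lemma~\ref{xxlem1.6}). As graded twists preserve the skew CY property, $B$ is skew CY by Lemma~\ref{xxlem1.2}, and being connected graded with $\mu_B=\id$ it is Calabi--Yau.

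For $B'=A[t^{\pm1};\mu_A]$ I would argue by localization: $B'$ is the Ore localization $B[t^{-1}]$ at the powers of the normal nonzerodivisor $t$, which is flat. I would transport homological smoothness from $B$ to $B'$ by tensoring a finite, finitely generated projective resolution of $B$ over $B^e$ with the flat extension $B'^e$ (which inverts $t\otimes 1$ and $1\otimes t$), and I would use that $\Ext^{i}_{(-)^e}(-,(-)^e)$ commutes with such localization to get $\Ext^i_{B'^e}(B',B'^e)\cong \Ext^i_{B^e}(B,B^e)\otimes_{B^e}B'^e$, which by $\mu_B=\id$ equals ${}^1B'^1$ concentrated in a single degree; thus $\mu_{B'}=\id$ and $B'$ is CY. I expect the main obstacle to be twofold. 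First is the convention-matching above: it is essential that one use the genuinely $\mb{Z}^2$-graded Theorem~\ref{xxthm5.4} rather than the single-graded Theorem~\ref{xxthm0.3}, because a total-degree twist would alter $A$ itself. Second is the Laurent case: verifying that Ore localization preserves homological smoothness and commutes with the bimodule-$\Ext$ computation is standard but must be done carefully, since these are exactly the facts that make $\mu_{B'}$ well defined and equal to the identity.
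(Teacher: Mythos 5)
Your proposal is correct and follows essentially the same route as the paper: the paper also sets $C = A[t]$ with its $\mb{Z}^2$-grading, twists by $\sigma = (\mathrm{Id}, \mu_C^{-1})$, uses Lemma~\ref{xxlem6.1} and $\hdet \mu_A = 1$ to get $\mu_{C^{\tilde{\sigma}}} = \mathrm{Id}$ from Theorem~\ref{xxthm5.4}(a), identifies $C^{\tilde{\sigma}} \cong A[t;\mu_A]$ (via the explicit isomorphism $\sum a_i t^i \mapsto \sum \mu^i(a_i)t^i$, which is the inverse of the identification you obtain from the Ore relation), and disposes of $B'$ by localizing at the powers of the normal nonzerodivisor $t$. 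The only difference is cosmetic: you verify the twist--Ore identification before computing the Nakayama automorphism rather than after, and you spell out the flat-base-change argument for $\Ext_{B^e}(B,B^e)$ that the paper dismisses as standard.
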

\begin{proof}  Since $A$ is AS regular, it is well-known that the Ore extension $B = A[t;\mu_A]$ is also.  So $A[t;\mu_A]$ is skew CY by Lemma~\ref{xxlem1.2}.
Let $C=A[t]$, which as in the proof of Lemma~\ref{xxlem7.1}(b) is $\mb{Z}^2$ graded.
Let $\sigma=(Id_C, \mu_C^{-1})$.  Then the AS index of $C$ is $\bfl(C)=(\bfl(A),1)$ by Lemma~\ref{xxlem6.1}(a), and $\hdet \mu_C = \hdet \mu_A = 1$, by Lemma~\ref{xxlem6.1}(b).  Now by Theorem \ref{xxthm5.4}(a),
$$\mu_{C^{\tilde{\sigma}}}=\mu_C\circ \sigma^{\bfl}\circ \xi^{-1}_{\hdet(\sigma)}=Id_C.$$ Thus  $C^{\tilde{\sigma}}$ is CY. 
Finally, one may check that the function $\psi: C^{\tilde{\sigma}} \to A[t;\mu_A]$ given by the formula  
$\sum_i a_i t^i \mapsto \sum_i \mu^i(a_i) t^i$ is an isomorphism of rings. 

(b)  The ring $B'$ is the localization of $B$ at the set of powers of the normal nonzerodivisor $t$.  It is standard that  
such a localization of a CY algebra remains CY.
\end{proof}

\begin{proof}[Proof of Corollary \ref{xxcor0.6}]
(a)  This is Proposition~\ref{xxpro7.3}(a).

(b) If $\mu_A$ has infinite order, the assertion is equivalent to
Proposition \ref{xxpro7.3}(b).  If $\mu_A$ has finite order, then 
the assertion follows from Corollary~\ref{xxex4.2} by taking $n=1$.

(c) Since $\mu = \mu_A$ has finite order and $\hdet \mu=1$, the invariant subring $A^G$ is
AS Gorenstein by \cite[Theorem 3.3]{JoZ}, where $G$ is the finite
group $\langle \mu \rangle$. Let $C=A^G$ and let $\int=\frac{1}{|G|}
\sum_{g\in G}g$. By
\cite[(E3.1.1), Lemmas 3.2 and 3.5(d)]{KKZ},
$$R^d\Gamma_{\fm_C}(C)^*=R^d\Gamma_{\fm}(A)^* \cdot {\textstyle{ \int}}
=(R^d\Gamma_{\fm}(A)^*)^{G} =(^\mu A^1(-{\bfl}))^{G}$$ as
$C$-bimodules, where $\bfl=\bfl(A)$. Note that the restriction of
$\mu$ to $C$ is the identity. Hence
$$(^\mu A^1(-{\bfl}))^{G}={^1 C^1}(-{\bfl})$$
and therefore the Nakayama automorphism of $C$ is trivial by Lemma
\ref{xxlem3.5}.
\end{proof}
\noindent We remark that if $A$ is PI, then Corollary \ref{xxcor0.6}(a) is a
special case of a result of Stafford-Van den Bergh \cite[Proposition
3.1]{SVdB}.

The results above strongly suggest the following question.
\begin{question}
\label{xxque7.4}
Do the conclusions of Corollary~\ref{xxcor0.6} hold for ungraded skew CY algebras $A$?  In particular, 
when $A$ is skew CY, is $A\rtimes \langle \mu_A\rangle$ always CY?  
\end{question}

We have already seen in the proof of Lemma~\ref{xxlem6.2} how our formula for the Nakayama automorphism 
of a graded twist may allow one to pass to a twist equivalent algebra with simpler Nakayama automorphism.   
We now offer several similar results about how close one might be able to get to a CY algebra through twisting.
\begin{proposition}
\label{xxpro7.2} Let $A$ be a noetherian connected graded AS Gorenstein algebra.  
\begin{enumerate}
\item
Assume $k$ is algebraically closed.
Suppose that $A$ is ${\mathbb Z}^w$-graded with a set of
generators $x_1,\cdots,x_w$ such that $\deg x_i$ is the $i$th unit
vector in ${\mathbb Z}^w$.  If $\bfl(A)\neq 0$, then there is an automorphism
$\sigma\in \Autw(A)$ such that $\mu_{A^{\tilde{\sigma}}} =\xi_c$
for some $c\in k^\times$.
\item
Let $\varphi\in \Autz(A)$ and assume that $\hdet \mu_A = 1$. Then there is a ${\mathbb Z}^2$-graded twist
of $A[t;\varphi]$ that has trivial Nakayama automorphism.
\end{enumerate}
\end{proposition}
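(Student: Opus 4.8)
The overall plan for both parts is to apply the graded–twist formula
$$\mu_{A^{\tilde{\sigma}}}=\mu_A\circ\sigma^{\bfl}\circ\xi_{\hdet(\sigma)}^{-1}$$
of Theorem~\ref{xxthm5.4}(a), choosing the twisting system so as to cancel $\mu_A$ against the $\sigma^{\bfl}$ and $\xi_{\hdet}$ terms. For part (a), I would first note that the generators force $\mu_A$ to be diagonal: any monomial in the $x_i$ of $\mathbb{Z}^w$-degree a unit vector $e_i$ must equal $x_i$, so $A_{e_i}=kx_i$, and since $\mu_A\in\Autw(A)$ is $\mathbb{Z}^w$-graded it sends $x_i\mapsto a_i x_i$ for scalars $a_i\in k^\times$; as $A$ is generated by the $x_i$ this gives $\mu_A=\xi_a$ with $a=(a_1,\dots,a_w)$. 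Writing $\bfl(A)=(\ell_1,\dots,\ell_w)$ and $L=\|\bfl(A)\|=\sum_i\ell_i$, I would then take the single graded automorphism $\tau=\xi_{a'}\in\Autw(A)$ with $a_i'=a_i^{-1/L}$, and twist by $\tau$ with respect to the total-degree $\|\cdot\|$-grading.

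Applying Theorem~\ref{xxthm5.4}(a) with the collection $(\tau,\dots,\tau)$, which induces exactly this $\|\cdot\|$-twist (since $\sigma^{|b|}=\tau^{\|b\|}$), one computes $\sigma^{\bfl}=\tau^{\ell_1}\cdots\tau^{\ell_w}=\tau^{L}=\xi_{a^{-1}}=\mu_A^{-1}$, so $\mu_A\circ\sigma^{\bfl}=Id$. By Lemma~\ref{xxlem5.3}(a), $\hdet(\tau)=(a')^{\bfl(A)}=\big(\prod_i a_i^{\ell_i}\big)^{-1/L}$ is a \emph{scalar} $h$, so $\xi_{\hdet(\sigma)}=\xi_h$ and therefore
$$\mu_{A^{\tilde{\tau}}}=Id\circ\xi_{h}^{-1}=\xi_{c},\qquad c=\Big(\textstyle\prod_i a_i^{\ell_i}\Big)^{1/L}.$$
The one genuinely delicate point—and the main obstacle—is the existence of the root $a_i^{-1/L}$: this needs $L\neq0$, which is where the hypothesis $\bfl(A)\neq0$ (more precisely, that the total AS index $\|\bfl(A)\|$ is nonzero) is used, together with $k$ algebraically closed to extract the $L$-th roots.

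For part (b) the plan is to realize $A[t;\varphi]$ as a graded twist of the polynomial extension $C=A[t]=A\otimes k[t]$ and then to compose with one further twist. A direct check using the left-twist convention of Section~\ref{xxsec5} shows that for any $\psi\in\Autz(A)$ the assignment $at^i\mapsto\psi^{i}(a)t^i$ is an isomorphism $C^{\widetilde{(Id_C,\psi^{-1})}}\cong A[t;\psi]$; in particular $A[t;\varphi]$ and $A[t;\mu_A]$ are both $\mathbb{Z}^2$-graded twists of $C$ by systems supported in the $t$-direction. The crucial input is Theorem~\ref{xxthm3.10}: since $\mu_A$ lies in the center of $\Autz(A)$, the automorphisms $\varphi^{-1}$ and $\mu_A^{-1}$ commute, so $\mu_A^{-1}\varphi$ extends to a $\mathbb{Z}^2$-graded automorphism of $A[t;\varphi]$ (the relation $ta=\varphi(a)t$ is preserved precisely because $\mu_A^{-1}$ and $\varphi$ commute), and the two twisting systems compose. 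Hence $(A[t;\varphi])^{\tilde{\tau}}\cong A[t;\mu_A]$ for $\tau=(Id,\mu_A^{-1}\varphi)$.

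It then remains to verify that $A[t;\mu_A]\cong C^{\widetilde{(Id_C,\mu_C^{-1})}}$ has trivial Nakayama automorphism, which I would do again via Theorem~\ref{xxthm5.4}(a). By Lemma~\ref{xxlem6.1}(a) we have $\mu_C=\mu_A\otimes Id$ and $\bfl(C)=(\bfl(A),1)$, so for $\rho=(Id_C,\mu_C^{-1})$ the term $\rho^{\bfl(C)}=Id^{\bfl(A)}\circ\mu_C^{-1}=\mu_C^{-1}$ cancels $\mu_C$. Moreover $\hdet(\rho)=(1,(\hdet_C\mu_C)^{-1})$, and $\hdet_C\mu_C=\hdet_C(\mu_A\otimes Id)=\hdet_A\mu_A=1$ by Lemma~\ref{xxlem6.1}(b) and the hypothesis $\hdet\mu_A=1$; thus $\xi_{\hdet(\rho)}=Id$ and $\mu_{C^{\tilde{\rho}}}=Id$. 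This exhibits $A[t;\mu_A]$ as the desired $\mathbb{Z}^2$-graded twist of $A[t;\varphi]$ with trivial Nakayama automorphism. Note that, unlike part (a), this argument needs no positivity of the index (the $t$-direction contributes index $1$), so the only real ingredient beyond Theorem~\ref{xxthm5.4} is the centrality of $\mu_A$.
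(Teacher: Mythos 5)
Your proof is correct and takes essentially the same approach as the paper: part (a) is the paper's argument (diagonalize $\mu_A$, extract $\bfl$-th roots of the scalars $a_i$ using $k$ algebraically closed and $\bfl(A)\neq 0$, then apply Theorem~\ref{xxthm5.4}(a); the paper invokes the $w=1$ case of that theorem while you use the constant multigraded collection $(\tau,\dots,\tau)$, but these produce the identical twist and the identical computation), and part (b) matches the paper's proof, which uses Theorem~\ref{xxthm3.10} to exhibit $A[t;\mu_A]$ as a $\mathbb{Z}^2$-graded twist of $A[t;\varphi]$ and cites Proposition~\ref{xxpro7.3} --- whose proof is exactly your Lemma~\ref{xxlem6.1} plus Theorem~\ref{xxthm5.4}(a) computation for $C=A[t]$ --- for the triviality of the Nakayama automorphism. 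The only (minor) improvement in your write-up is that you reprove the relevant part of Proposition~\ref{xxpro7.3} directly in the AS Gorenstein setting, whereas the paper's citation of that proposition nominally carries an AS regular hypothesis.
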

\begin{proof}
(a) Since $A$ is ${\mathbb Z}^w$-graded, its Nakayama automorphism
$\mu_A$ is ${\mathbb Z}^w$-graded. Thus $\mu_A(x_i)=a_i x_i$ for
all $i=1,\cdots, w$. Viewing $A$ as a connected graded algebra we
assume that $\bfl:=\bfl(A)\neq 0$. Since $k$ is algebraically closed, there
are $\delta_i\in k^\times$ such that $\delta_i^{-\bfl}=a_i$ for all $i$.
Then $\sigma^{-\bfl}=\mu_A$ where $\sigma=\xi_{\delta}\in \Autw(A)
\subset \Autz(A)$. The assertion follows from Theorem \ref{xxthm5.4}(a)
(with $w=1$).

(b) By Proposition~\ref{xxpro7.3}, $A[t; \mu_A]$ is CY.  Since $\mu_A$ is in the center of $\Autz(A)$ by Theorem~\ref{xxthm3.10}, 
a similar argument as in the proof of Proposition~\ref{xxpro7.3} shows that $A[t; \mu_A]$ is a 
$\mb{Z}^2$-graded twist of $A[t; \varphi]$. 
\begin{comment}
Let $B=A[t;\varphi]$ and view $B$ as ${\mathbb Z}^2$-graded by
defining $\deg_B a=(\deg_A a, 0)$ for all $a\in A$ and $\deg
z=(0,1)$. Then $B$ is AS Gorenstein and ${\bfl}(B)=(\bfl(A),1)$. Let
$\mu_B$ be the Nakayama automorphism of $B$. It is clear that $\mu_B
\in \Aut_{{\mathbb Z}^2}(A)$. Define $\sigma$ to be $\{Id_B,
\mu_B\}$. Then $\sigma^{-\bfl}=\mu_B$ where $\bfl=\bfl(B)$. It
follows from Theorem \ref{xxthm5.4}(a) that
$\mu_{B^{\tilde{\sigma}}} =\xi_{\hdet(\sigma)}^{-1}$. By Theorem
\ref{xxthm6.2}, $\hdet \mu_B=1$, and trivially $\hdet Id=1$. Thus
$\xi_{\hdet(\sigma)}^{-1}= \xi_{(1,1)}^{-1}=Id$. The assertion
follows from Theorem \ref{xxthm5.4}(a).
\end{comment}
\end{proof}

For the rest of this section we prove Corollary \ref{xxcor0.7}.
From now on we assume that $k$ is algebraically closed of
characteristic $0$. We use \cite{Bor} as a reference for algebraic
groups. The following two lemmas are presumably known, but we sketch the proofs since 
we lack a reference.
\begin{lemma}
\label{xxlem7.6}
Let $A$ be a finitely generated ${\mathbb Z}^w$-graded algebra, which is locally finite and connected $\mb{N}$-graded with 
respect to the $\mid\mid \;\;\mid\mid$-grading.
Then $\Autw(A)$ is an affine algebraic group over $k$.
\end{lemma}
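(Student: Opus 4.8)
The plan is to realize $\Autw(A)$ as the unit group of an affine algebraic monoid of graded endomorphisms. First I would fix a finite set of homogeneous algebra generators of $A$ and let $N$ be the maximum of their $\mid\mid\;\mid\mid$-degrees, so that $A$ is generated by the truncation $V := \bigoplus_{1 \le \mid\mid v \mid\mid \le N} A_v$. By local finiteness $V$ is a finite-dimensional $\mb{Z}^w$-graded space, and only finitely many $A_v$ are nonzero in this range. Since any $g \in \Autw(A)$ satisfies $g(A_v) \subseteq A_v$ for every $v$, it restricts to an element $g|_V$ of the affine space $E := \prod_{1 \le \mid\mid v\mid\mid \le N} \End(A_v)$; and because $V$ generates $A$, the restriction map $g \mapsto g|_V$ is injective. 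Thus it suffices to exhibit the image as an affine algebraic group inside $E$.

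Next I would describe the graded endomorphisms. Write $A = T(V)/J$, where $T(V)$ is the free algebra on the graded vector space $V$ and $J = \ker(\pi)$ for the multiplication map $\pi \colon T(V) \to A$; note $T(V)$ is again locally finite for the $\mid\mid\;\mid\mid$-grading, so each $J_n$ is finite-dimensional. A graded linear map $\rho \in E$ induces $\rho_* \in \End(T(V))$, and $\rho$ extends to a (necessarily unique) graded algebra endomorphism of $A$ precisely when $\rho_*(J) \subseteq J$, equivalently $\rho_*(J_n) \subseteq J_n$ for all $n$. For each fixed $n$, the restriction $\rho_*|_{T(V)_n}$ depends polynomially on $\rho$, so the vanishing of the composite $J_n \hookrightarrow T(V)_n \xrightarrow{\rho_*} T(V)_n \twoheadrightarrow T(V)_n/J_n$ is a Zariski-closed condition on $\rho$; intersecting over all $n$ shows that the set $M \subseteq E$ of graded endomorphisms is Zariski-closed. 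Since composition corresponds to the (polynomial) componentwise product in $E$, the set $M$ is an affine algebraic monoid.

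Finally I would identify $\Autw(A)$ with the unit group $M^\times$ and check that this is a principal open subset of $M$. If $\rho \in M$ is a unit then each component $\rho_v$ is invertible, so $\prod_v \det(\rho_v) \ne 0$. Conversely, if every $\rho_v$ (with $\mid\mid v\mid\mid \le N$) is invertible, then $\rho(V) = V$, so the image of the induced endomorphism $\tilde\rho$ is the subalgebra generated by $V$, namely all of $A$; thus $\tilde\rho$ is surjective, hence bijective on each finite-dimensional $A_v$, hence an automorphism, and one checks that its inverse again lies in $M$. Therefore $\Autw(A) = M^\times$ is the principal open locus $\{\prod_v \det \rho_v \ne 0\}$ in $M$, which is again affine, and the group operations are morphisms. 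I expect the main obstacle to be the second step: since $A$ need not be finitely presented, one must argue that \emph{``$\rho$ extends to an endomorphism''} is cut out by polynomial equations, even though a priori this is a condition involving the infinitely many relations in $J$. The key point is that gradedness localizes the condition to each finite-dimensional degree $J_n$, after which it becomes visibly Zariski-closed.
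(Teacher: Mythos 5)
Your proof is correct, and it takes a genuinely cleaner route than the paper's sketch, though the two share the same engine: restrict graded automorphisms to a finite-dimensional generating truncation, and observe that preservation of the relations in a graded presentation is a Zariski-closed (polynomial) condition. The paper fixes homogeneous generators $a_i \in A_{\alpha_i}$, injects $\Autw(A)$ into $\prod_{i=1}^s \GL(A_{\alpha_i})$ by restriction, and asserts that the image is cut out by the relation conditions on the elements $b_i = \tau(a_i)$; affineness then follows because a closed subgroup of an affine algebraic group is affine algebraic. You instead form the full monoid $M$ of graded endomorphisms, show it is closed in the affine space $\prod_v \End(A_v)$ via the tensor-algebra presentation $T(V)/J$ and the degreewise conditions $\rho_*(J_n) \subseteq J_n$, and then realize $\Autw(A)$ as the unit group $M^{\times}$, a principal open (hence affine) subset on which inversion is regular by Cramer's rule. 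What your detour through the monoid buys is precision at exactly the points where the paper is loose. First, satisfying the relations only produces an \emph{endomorphism} sending $a_i \mapsto b_i$ (for instance $b_i = 0$ satisfies them), so the paper's stated biconditional with the existence of an \emph{automorphism} requires the supplementary argument you make explicit: an endomorphism invertible on a generating truncation is surjective, and a surjective graded endomorphism of a locally finite graded algebra is bijective piece by piece. Second, by working with all of $V$ and $T(V)$ rather than only the values at the generators, you avoid a compatibility issue in the paper's setup: a tuple $(g_i) \in \prod_i \GL(A_{\alpha_i})$ must agree with the induced endomorphism on all of $A_{\alpha_i}$, not merely at $a_i$, and without imposing those extra closed conditions the locus the paper describes can be strictly larger than the image of its embedding (consider two generators of equal degree). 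The paper's sketch is shorter, and its gaps are routine to fill---your argument is essentially the standard filling---but as written, yours is the complete one.
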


\begin{proof}  Let A be generated as an algebra by homogeneous elements $\{ a_i \}_{i = 1}^s$, where 
$a_i \in A_{\alpha_i}$.  Choose a graded presentation $F/I \cong A$, where F is a $\mathbb{Z}^w$-graded free
algebra with homogeneous generators $x_i$ which map to the $a_i$.
There is a natural injection $i : \Autw(A) \to \prod_{i=1}^s \GL(A_{\alpha_i})$, where 
$\tau \in \Autw(A)$ corresponds to the product of the bijections it induces of the graded subspaces containing the generating
set, since any $\tau \in \Autw(A)$ is determined by the elements $\tau(a_i)$. Conversely,
choosing arbitrary $b_i \in A_{\alpha_i}$, there is an automorphism $\tau$ in $\Autw(A)$  with
$\tau(a_i) = b_i$  if and only if for every relation $r = \sum d_{j_1, \dots, j_k} x_{j_1} \dots x_{j_k} \in I$, 
we have $\sum d_{j_1, \dots, j_k} b_{j_1} \dots b_{j_k} = 0$ in $A$.   It is easy to see that for any relation $r$ this gives a
closed condition on the choice of $b_i$, and intersecting over all relations (or a generating
set of the ideal of relations) we get that $i$ is a closed regular embedding.
The map $i$ is clearly a group homomorphism, so $\Autw(A)$ is an affine algebraic
group.
\end{proof}

\begin{lemma}
\label{xxlem7.7} Let $d \neq 0$.  Let $G$ be an abelian affine
algebraic group, and let $G^0$ be the connected component of the identity. 
\begin{enumerate}
\item
If $G$ is connected, then for any $x \in G$, there exists $y \in G$ such
that $y^d = x$.
\item For any $x \in G$, there exists $z \in G$
of finite order and $y \in G^0$ such that $xz \in G^0$ and $xz = y^d$.
\end{enumerate}
\end{lemma}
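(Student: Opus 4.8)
The plan is to prove both parts using the structure theory of commutative linear algebraic groups over the algebraically closed field $k$ of characteristic zero. The key structural input is that a connected commutative affine algebraic group splits as a direct product $G^0 \cong T \times U$, where $T$ is its subgroup of semisimple elements (a torus, isomorphic to some $(\mathbb{G}_m)^n$) and $U$ is its subgroup of unipotent elements \cite{Bor}; moreover, in characteristic zero the connected commutative unipotent group $U$ is a vector group, i.e.\ $U \cong (\mathbb{G}_a)^r$ as algebraic groups.

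For part (a), where $G$ is connected, I would first reduce to the two factors via this decomposition, noting that the $d$-th power map $\phi_d\colon g\mapsto g^d$ respects the product $G\cong T\times U$. On the torus $T$ the map $\phi_d$ is surjective because $k$ is algebraically closed, so every element of $k^\times$ admits a $d$-th root and hence so does every element of $(\mathbb{G}_m)^n$. On the vector group $U$, written additively, $\phi_d$ becomes multiplication by $d$, which is an isomorphism since $d\neq 0$ and the characteristic is zero. Therefore $\phi_d$ is surjective on $G=T\times U$, producing the desired $y$. (Alternatively, since $G$ is abelian, $\phi_d$ is a homomorphism whose image is a closed connected subgroup; one checks the kernel $G[d]$ is finite, again via the decomposition, so the image is full-dimensional and hence all of $G$.)

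For part (b), I would start from the standard fact that $G/G^0$ is a finite abelian group. Given $x\in G$, let $m$ be the order of the image $\bar x$ in $G/G^0$, so that $x^m\in G^0\cong T\times U$; write $x^m=(t_0,u_0)$. The heart of the argument is to correct $x$ by an element of $G^0$ so as to land on a torsion element in the same coset. I would choose $g=(t_1,u_1)\in T\times U$ with $t_1^m=t_0^{-1}$ (possible since the $m$-th power map is surjective on $T$, by the reasoning of part (a)) and $u_1=-u_0/m$ (possible in characteristic zero). Then, using that $G$ is abelian, $(xg)^m = x^m g^m = (t_0 t_1^m,\, u_0+m u_1)=e$, so $w:=xg$ has finite order. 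Setting $z:=w^{-1}$, the element $z$ has finite order and $xz = x g^{-1} x^{-1}=g^{-1}\in G^0$. Finally, applying part (a) to the connected group $G^0$ gives $y\in G^0$ with $y^d=g^{-1}=xz$, completing the proof.

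The main obstacle is the construction in part (b): showing that every coset of $G^0$ contains a finite-order element, i.e.\ that the torsion subgroup of $G$ surjects onto the component group $G/G^0$. The subtlety is that the unipotent factor $U$ has no nontrivial torsion, so the unipotent contribution $u_0$ to $x^m$ must be eliminated outright rather than merely made torsion; this is precisely where characteristic zero is essential, since it permits division by $m$ in the vector group, while the torus contribution is absorbed using surjectivity of the $m$-th power map on $T$.
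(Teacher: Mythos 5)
Your proof is correct, but it takes a genuinely different route from the paper's in part (a), and a more explicit version of the same idea in part (b). For (a), you invoke the structure theorem decomposing the connected commutative affine group as $T\times U$ (torus times unipotent part, the latter a vector group in characteristic zero) and check surjectivity of the $d$-th power map factor by factor: on $T\cong (k^\times)^n$ because $k$ is algebraically closed, on $U$ because multiplication by $d$ is invertible in characteristic zero. The paper avoids the decomposition entirely: it forms the quotient $P=G/\phi_d(G)$ (affine by \cite[Theorem 6.8]{Bor}), observes that $P$ is connected, abelian, and $d$-torsion, hence consists of semisimple elements (here characteristic zero enters, just as it does in your argument), so $P$ embeds in a torus by \cite[Proposition 8.4]{Bor}; a connected group with finitely many elements of order dividing $d$ is trivial, so $\phi_d$ is surjective. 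For (b), both arguments correct $x$ by an element of $G^0$ so that the product becomes torsion: the paper takes $w$ in the component of $x$ with $w^m\in G^0$, extracts an $m$-th root $v\in G^0$ of $w^m$ by citing part (a), and sets $z=vw^{-1}$; you construct that same root explicitly in the $T\times U$ coordinates (your $g^{-1}$ plays the role of $v$ with $w=x$, and your $z=(xg)^{-1}$ coincides with the paper's $vw^{-1}$). What each approach buys: yours is concrete and makes transparent exactly where characteristic zero is needed (division by $m$ in the vector group, where torsion cannot absorb the unipotent part), while the paper's needs lighter structural inputs (quotients and the embedding of a commutative group of semisimple elements into a torus, rather than the full product decomposition) and its part (b) simply recycles part (a) instead of repeating the construction.
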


\begin{proof}
(a) Since $G$ is abelian, the map $\phi: G \to G$ defined by $\phi(y) = y^d$
is a homomorphism of algebraic groups. The image $H$ is a closed
algebraic subgroup of $G$. Hence the quotient algebraic group
$P = G/H$ exists, and this is again affine \cite[Theorem 6.8]{Bor}.
It is also still abelian and connected since $G$ is. Note that $z^d = e$
for all $z \in P$. We claim that $P$ is the trivial group $\{e\}$. Since the elements of 
$P$ are all $d$th roots of $e$, it follows  that 
$P$ consists of semisimple elements under any embedding of $P$ in a matrix group.  Then 
$P$ can be embedded in a torus $(k^\times)^n$ for some $n$, by \cite[Proposition 8.4]{Bor}.  Since $P$ is
connected and a torus contains only finitely many elements of order $d$, we must have that 
$P$ is a point.  In other words, $\phi$ is surjective and the result follows.

(b) We just have to produce an element $z$ of finite order such that
$xz \in G^0$.  The existence of $y$ follows from part (a).
Let $H$ be the connected component containing $x$.  Choose any $w \in H$
and $m \geq 1$ such that $w^m \in G^0$.  By part (a), there
is $v \in G^0$ such that $v^m = w^m$.  Then $z = vw^{-1}$ satisfies
$z^m = e$, so $z$ has finite order.  Moreover, clearly $xz \in G^0$,
since $x \in H$, $w \in H$, and $v \in G^0$.
\end{proof}

\begin{theorem}
\label{xxthm7.8}
Let $A$ be a noetherian connected graded AS Gorenstein algebra with $\bfl(A)\neq 0$ and 
with $\hdet \mu_A = 1$.
Then there is a $\sigma\in \Autz(A)$ such that $\mu_{A^{\tilde{\sigma}}}$
is of finite order.
\end{theorem}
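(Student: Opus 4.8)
The plan is to combine homological identity \eqref{HI2} from Theorem~\ref{xxthm5.4}(a) with the structure theory of the algebraic group $\Autz(A)$. Recall that \eqref{HI2} gives $\mu_{A^{\tilde{\sigma}}} = \mu_A \circ \sigma^{\bfl} \circ \xi_{\hdet(\sigma)}^{-1}$ for any $\sigma \in \Autz(A)$, where we now regard $A$ as $\mb{N}$-graded so that $\bfl := \bfl(A) \in \mb{Z}\setminus\{0\}$ is a nonzero integer. The idea is to choose $\sigma$ so that $\mu_A \circ \sigma^{\bfl}$ becomes an element of finite order, and then to show that the remaining scalar factor $\xi_{\hdet(\sigma)}$ also has finite order, so that the product does too.

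First I would set $G = \Autz(A)$, which is an affine algebraic group over $k$ by Lemma~\ref{xxlem7.6}, since a noetherian connected graded algebra is finitely generated. By Theorem~\ref{xxthm3.10}, $\mu_A$ lies in the center of $G$, so the Zariski closure $T$ of the cyclic group $\langle \mu_A\rangle$ is a closed abelian subgroup, hence itself an abelian affine algebraic group containing $\mu_A^{-1}$. I would then apply Lemma~\ref{xxlem7.7}(b) to the abelian group $T$, the element $x = \mu_A^{-1}\in T$, and the nonzero integer $d = \bfl$: this produces an element $z\in T$ of finite order and an element $y\in T^0$ such that $y^{\bfl} = \mu_A^{-1}z$. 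Taking $\sigma = y\in T^0\subseteq \Autz(A)$, identity \eqref{HI2} then gives
\[
\mu_{A^{\tilde{\sigma}}} = \mu_A \circ \sigma^{\bfl}\circ \xi_{\hdet(\sigma)}^{-1} = \mu_A\circ(\mu_A^{-1}z)\circ\xi_{\hdet(\sigma)}^{-1} = z\circ\xi_{\hdet(\sigma)}^{-1}.
\]

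It remains to check that $z\circ\xi_{\hdet(\sigma)}^{-1}$ has finite order. Here I would use that $\hdet\colon \Autz(A)\to k^{\times}$ is a group homomorphism and that $\hdet\mu_A = 1$ by hypothesis. Applying $\hdet$ to the relation $\sigma^{\bfl} = \mu_A^{-1}z$ gives $\hdet(\sigma)^{\bfl} = \hdet(\mu_A)^{-1}\,\hdet(z) = \hdet(z)$. Since $z$ has finite order, $\hdet(z)$ is a root of unity in $k^{\times}$, whence $\hdet(\sigma)^{\bfl}$ is a root of unity; as $\bfl\neq 0$, it follows that $\hdet(\sigma)$ is itself a root of unity, so $\xi_{\hdet(\sigma)}$ has finite order. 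Finally, since $\xi_{\hdet(\sigma)}$ lies in the center of $\Autz(A)$, it commutes with $z$, and the product of two commuting finite-order automorphisms has finite order. Thus $\mu_{A^{\tilde{\sigma}}}$ has finite order, as desired.

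I expect the only genuinely delicate point to be the group-theoretic extraction of a $\bfl$-th root encoded in Lemma~\ref{xxlem7.7}(b): one cannot in general solve $\sigma^{\bfl} = \mu_A^{-1}$ exactly, so the finite-order correction $z$ is essential, and it is precisely this correction that forces us to also control $\hdet(\sigma)$. Everything else---the reduction to an abelian algebraic group via centrality of $\mu_A$ (Theorem~\ref{xxthm3.10}), the substitution into \eqref{HI2}, and the root-of-unity bookkeeping---is routine once the choice of $\sigma$ is made.
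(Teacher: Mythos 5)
Your proposal is correct and follows essentially the same route as the paper's own proof: both invoke Lemma~\ref{xxlem7.6} and Theorem~\ref{xxthm3.10} to reduce to an abelian affine algebraic group (you use the closure of $\langle\mu_A\rangle$, the paper uses the full center $Z$ of $\Autz(A)$, an immaterial difference), apply Lemma~\ref{xxlem7.7}(b) with $x=\mu_A^{-1}$ and $d=\bfl$ to get $\sigma=y$ with $y^{\bfl}=\mu_A^{-1}z$, substitute into \eqref{HI2} to obtain $\mu_{A^{\tilde{\sigma}}}=z\circ\xi_{\hdet(\sigma)}^{-1}$, and conclude via the same $\hdet$ computation that $\hdet(\sigma)$ is a root of unity. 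Your explicit remarks that $\hdet$ is a group homomorphism and that $z$ commutes with the central element $\xi_{\hdet(\sigma)}$ are points the paper leaves implicit, but they are the same argument.
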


\begin{proof}
By Theorem \ref{xxthm3.10}, $\mu_A$ is in the center $Z$ of $\Autz(A)$.
Note that $\Autz(A)$ is an affine algebraic group by Lemma~\ref{xxlem7.6} 
and that $Z$ is a closed algebraic subgroup of $\Autz(A)$.  It suffices to work in $Z$ 
for the rest of the proof.

Let $x=\mu_A^{-1}$. By Lemma \ref{xxlem7.7}(b), there is an $z \in Z$
of finite order and $y\in Z^0$, such that $xz \in Z^0$ and $y^{\bfl}= x z$
where $\bfl=\bfl(A)$. Let $\sigma=y$. Then by Theorem \ref{xxthm5.4}(a),
$$\mu_{A^{\tilde{\sigma}}}=\mu_A\circ \sigma^{\bfl}\circ
\xi_{\hdet(\sigma)}^{-1} = z\circ \xi_{\hdet(\sigma)}^{-1}.$$
Since $y^{\bfl}=xz$, we have
$$\hdet(\sigma)^{\bfl}=\hdet( y^{\bfl})=\hdet x\hdet z=\hdet z$$
since $\hdet x=(\hdet \mu_A)^{-1}=1$ by assumption.
Since $z$ has finite order, $\hdet z$ has finite
order.  Thus $\hdet(\sigma)$ has finite order.  Finally, we see that 
$\mu_{A^{\tilde{\sigma}}}$ has finite order.
\end{proof}

We conclude the paper with the proof of the last corollary from the introduction.
\begin{proof}[Proof of Corollary \ref{xxcor0.7}]
Since $A$ is AS regular, $\bfl(A)>0$ \cite[Proposition 3.1]{SteZh}. The assertion follows from
Theorem \ref{xxthm7.8}.
\end{proof}

\providecommand{\bysame}{\leavevmode\hbox to3em{\hrulefill}\thinspace}
\providecommand{\MR}{\relax\ifhmode\unskip\space\fi MR }
\providecommand{\MRhref}[2]{%
  \href{http://www.ams.org/mathscinet-getitem?mr=#1}{#2} }
\providecommand{\href}[2]{#2}

\end{document}